\newcommand{\rem}[1]{}
\newcommand\tr{\mathord{\textrm{\normalfont tr}\,}}
\newcommand\bcdot{\mathord{\,\boldsymbol{\cdot}\,}}
\newcommand\bwedge{\text{{\Large$\,\boldsymbol\wedge\,$}}}
\newcommand\nd{\mathord{{\rm d}^\nabla}}
\newcommand\blangle{\mathord{\boldsymbol\langle}}
\newcommand\brangle{\mathord{\boldsymbol\rangle}}
\newcommand{\Ad}{\textrm{\normalfont Ad}}
\newcommand{\partialnew}{\textsf{\reflectbox{6}}}
\newenvironment{proof}[1][Proof]{\noindent\textbf{#1.} }{\ \rule{0.5em}{0.5em}}
\def\XXint#1#2#3{{\setbox0=\hbox{$#1{#2#3}{\int}$ }
\vcenter{\hbox{$#2#3$ }}\kern-.5\wd0}}
\definecolor{bgd}{RGB}{153,0,51}      
\begin{document}

\newtheorem{theorem}{Theorem}[section]
\newtheorem{definition}[theorem]{Definition}
\newtheorem{lemma}[theorem]{Lemma}
\newtheorem{remark}[theorem]{Remark}
\newtheorem{proposition}[theorem]{Proposition}
\newtheorem{corollary}[theorem]{Corollary}
\newtheorem{example}[theorem]{Example}



\title{Infinite-dimensional Lagrange--Dirac systems with\\ boundary energy flow II: Field theories with bundle-valued forms}
\author{Fran\c{c}ois Gay--Balmaz$^{1}$, Álvaro Rodríguez Abella$^{2}$ and Hiroaki Yoshimura$^{3}$}

\addtocounter{footnote}{1}
\footnotetext{Division of Mathematical Sciences, Nanyang Technological University, 21 Nanyang Link, Singapore 637371.
\texttt{francois.gb@ntu.edu.sg}
\addtocounter{footnote}{1} }

\footnotetext{Department of Applied Mathematics, ICAI School of Engineering, Comillas Pontifical University, Madrid, Spain.
\texttt{arabella@comillas.edu}
\addtocounter{footnote}{1}}

\footnotetext{Faculty of Science and Engineering, Waseda University. 3--4--1, Okubo, Shinjuku, Tokyo, Japan.
\texttt{yoshimura@waseda.jp}
\addtocounter{footnote}{1} }

\date{ }
\maketitle
\makeatother



\begin{abstract}
Part I of this paper introduced the infinite dimensional Lagrange--Dirac theory for physical systems on the space of differential forms over a smooth manifold with boundary. This approach is particularly well-suited for systems involving energy exchange through the boundary, as it is built upon a restricted dual space - a vector subspace of the topological dual of the configuration space - that captures information about both the interior dynamics and boundary interactions. Consequently, the resulting dynamical equations naturally incorporate boundary energy flow. In this second part, the theory is extended to encompass vector-bundle-valued differential forms and non-Abelian gauge theories. To account for two commonly used forms of energy flux and boundary power densities, we introduce two distinct but equivalent formulations of the restricted dual. The results are derived from both geometric and variational viewpoints and are illustrated through applications to matter and gauge field theories. The interaction between gauge and matter fields is also addressed, along with the associated boundary conditions, applied to the case of the Yang–Mills–Higgs equations.
\end{abstract}


\maketitle

\section{Introduction}

Determining the variational and geometric structures underlying the equations of continuum mechanics and field theories is fundamental not only for modeling these systems but also for guiding their structure-preserving discretization. An important class of such systems involves those that exchange energy with their surroundings through their boundary. It is crucial that these geometric and variational structures align with those of classical mechanics, specifically the Hamiltonian principle for evolution curves in the configuration manifold, and the canonical symplectic form on the phase space.

A geometric and variational framework for systems with boundary energy flow was developed in Part I \cite{GBRAYo2025I} of this paper, based on an infinite-dimensional extension of Lagrange--Dirac dynamical systems. The main advantage of this approach is that \textit{it satisfies a set of consistency requirements with the geometric and variational formulation of classical mechanics}, as detailed in Part I. In particular, the geometric structures underlying such systems are Dirac structures which are directly constructed from the canonical symplectic form on phase space. Furthermore, the solution curves of Lagrange--Dirac dynamical systems are characterized as the critical trajectories of a variational principle, the Lagrange--d'Alembert--Pontryagin principle, which consistently extends the Hamilton principle to include forces, as well as velocity and momentum as independent variables.
Finally, the geometric framework does not impose specific constraints on the form or regularity of the Lagrangian density.
Further advantages of this approach, particularly related to interconnections
and symmetry reduction, will be explored in future works.

A key step in the development of infinite dimensional Lagrange--Dirac dynamical systems with boundary energy flow is the careful choice of the dual space, which allows the description of boundary effects while keeping intact the canonical nature of the geometric and variational structures used in the finite-dimensional case. In Part I, we considered systems with a configuration manifold given by the Fr\'echet space $V=\Omega^k(M)$ of differential $k$-forms on an $m$-dimensional, compact manifold $M$ with smooth boundary. The so called \textit{restricted dual}, denoted $V^\star \subset V'$, with $V'$ the topological dual, was defined as
\[
V^\star = \Omega^{m-k}(M) \times \Omega^{m-k-1}(\partial M)
\]
with the duality paring
\[
\langle (\alpha, \alpha_\partial), \varphi\rangle = \int_M \varphi\wedge \alpha + \int_{\partial M}\iota_{\partial M}^*\varphi \wedge \alpha_\partial,\quad \varphi\in V,\;\;(\alpha, \alpha_\partial)\in V^\star,
\]
where $\iota_{\partial M}: \partial M\rightarrow M$ is the inclusion of the boundary.
Using this duality, the cotangent bundle can be defined as $T^\star V= V\times V^\star$, and its canonical symplectic form naturally includes boundary terms. Similarly, the induced canonical Dirac structure is a subbundle $D_{\rm can}\subset T(T^\star V) \oplus T^\star (T^\star V)$ which involves boundary conditions, so that the associated Lagrange--Dirac dynamical systems can describe boundary energy flow, see \cite{GBRAYo2025I}. This geometric framework was applied to nonlinear wave equations and the Maxwell equations with boundary energy flow.

In this paper, we expand the scope of the approach first by considering continuum systems with configuration manifolds given by the space $V=\Omega^k(M,E)$ of $E$-valued $k$-forms, where $\pi_{E,M}: E\rightarrow M$ is a vector bundle over $M$. Typical examples are matter fields, which take values in associated bundles. The space+time decomposition yields Lagrangian functions
\[
L_\nabla:TV\rightarrow \mathbb{R},
\]
which are expressed in terms of a Lagrangian density $\mathscr{L}$ as
\begin{equation}\label{L_nabla_1}
L_\nabla (\varphi, \dot \varphi)= \int_M \mathscr{L}\big( \varphi(x), \dot \varphi(x), {\rm d}^\nabla \varphi(x)\big).
\end{equation}
Here, $\nabla$ denotes a given linear connection on the vector bundle, and $TV=V\times V$ is the tangent bundle to $V=\Omega^k(M,E)$.

We also consider the case of gauge fields, whose configuration manifold is given by the affine space $\mathcal{C}(P)$  of principal connections of a principal $G$-bundle $\pi_{P,M}:P\rightarrow M$.
In the space+time decomposition and using the temporal gauge, the Lagrangian function for gauge fields is a map
\[
L:T \mathcal{C}(P)\rightarrow \mathbb{R},
\]
which can be expressed in terms of a Lagrangian density $\mathscr{L}$ as
\[
L_\nabla (A, \dot A)= \int_M \mathscr{L}\big( A(x), \dot A(x), {\rm d}^A A(x)\big),
\]
where $A$ is a principal connection and $B_A={\rm d}^AA$ is its curvature. Here, $T\mathcal{C}(P)=\mathcal{C}(P)\times\Omega^1(M,\tilde{\mathfrak{g}})$ is the tangent bundle of the affine space $\mathcal{C}(P)$ of connections, whose model vector space is the space of $\tilde{\mathfrak{g}}$-valued (adjoint bundle) $1$-forms on $M$.

Finally, we consider the interaction of matter and gauge fields via minimal coupling. The matter fields are sections of an associate bundle $\tilde V= P\times_G V \rightarrow M$ defined by a representation $\varrho:G\times V \rightarrow V$ of the structure group of the principal bundle on a vector space $V$. We denote by $\Omega^0(M, \tilde V)$ the space of such smooth sections. The coupling occurs through the gauge field (the principal connection) which induces a linear connection on the matter field. This results in a Lagrangian
\[
L: T Q  \rightarrow \mathbb{R},\qquad Q= \mathcal{C}(P) \times \Omega^0(M, \tilde V)
\]
expressed in terms of the Lagrangian densities as follows:
\[
L (A, \dot A, {\rm d}^AA, \varphi, \dot \varphi, {\rm d}^A\varphi)= \int_M \mathscr{L}_{\rm gau}\big( A(x), \dot A(x), {\rm d}^A A(x)\big)+ \mathscr{L}_{\rm mat}\big( \varphi(x), \dot \varphi(x), {\rm d}^A \varphi(x)\big).
\]

\medskip

In each case, the Lagrange--Dirac formulation and its associated variational principle enable the incorporation of both interior (volume) and boundary (surface) currents within a unified geometric framework. This leads to local and global energy balance laws, such as the Poynting theorem in Maxwell theory, its non-Abelian counterpart in the Yang–Mills equations, and further extensions that account for coupling with matter fields.
The key step in our geometric framework is identifying the restricted dual of the Fr\'echet configuration space as a suitable subspace of the topological dual. We provide two distinct realizations of this restricted dual, motivated by two commonly used forms of energy flux densities and boundary power densities (see, for example, Propositions \ref{prop:energybalancevectorbundle} and \ref{prop:energybalancegauge}). We refer the reader to \cite{AiSoZh2019,Ta1997,AvEs1999,CaCaFi2024,AsKoMaVa2024} for works on matter and gauge fields formulated on bounded spatial domains, and for discussion of the specific importance of such settings.
Besides these applications, the framework developed here is directly applicable to field theories in elasticity, such as geometrically exact beam and shell models, where boundary conditions play a significant role.

\subsection{Preliminaries on Lagrange--Dirac dynamical systems}

We briefly review here the construction of Lagrange--Dirac dynamical systems in the finite dimensional case. For a more detailed account, we refer to Part I, as well as to \cite{YoMa2006a,YoMa2006b} for the original references.  

Consider a system with configuration manifold $Q$, Lagrangian $L:TQ\rightarrow \mathbb{R}$, and external force $F$ given by a fiber-preserving map
\begin{equation}
\label{F}
F:TQ \rightarrow T^*Q.
\end{equation}
Given the canonical symplectic form $\Omega_{T^*Q}=d q^i\wedge dp_i$ on the phase space $T^*Q$, we construct the canonical Dirac structure as the graph of its associated flat map $\Omega^\flat_{T^*Q}:T(T^*Q)\rightarrow T^*(T^*Q)$, namely, $D_{T^*Q}=\operatorname{graph}(\Omega^\flat_{T^*Q}) \subset T(T^*Q)\oplus T^*(T^*Q)$. More explicitly, at each point $p_q\in T^*Q$, the canonical Dirac structure is given by
\begin{equation}\label{def_Dcan}
\begin{aligned}
D_{T^*Q}(p_q)&= \operatorname{graph}\left(\Omega_{T^*Q}^\flat (p_q)\right)\\
&=\left\{(v_{p_q},\alpha_{p_q})\in T_{p_q}(T^*Q)\times T^*_{p_q}(T^*Q)\;\left| \;
\alpha_{p_q}=\Omega_{T^*Q}^\flat(p_q)(v_{p_q})\right.
\right\}.
\end{aligned}
\end{equation}

From the Lagrangian $L:TQ\rightarrow \mathbb{R}$, one constructs the \emph{Dirac differential} ${\mathrm d}_DL=\gamma_{Q}\circ{\rm d}L:TQ\to T^*(T^*Q)$, where $\gamma_Q: T^*(TQ) \rightarrow T^*(T^*Q)$ is the canonical isomorphism given by $\gamma_Q(q,\delta q,\delta p,p)= (q,p,-\delta p,\delta q)$. The Dirac differential has the local expression
\[
{\mathrm d}_DL(q,v)= \left( q, \frac{\partial L}{\partial v}, - \frac{\partial L}{\partial q}, v \right),
\]
for all $(q,v)\in TQ$.

From the force $F:TQ \rightarrow T^*Q$ and the Lagrangian $L$, we construct the associated \textit{Lagrangian force field} as the map $\widetilde{F}: TQ \rightarrow T^*(T^*Q)$ defined by
\begin{equation}\label{Ftilde} 
\left\langle \widetilde{F}(q,v), W \right\rangle = \left\langle F( q,v), T_{ \mathbb{F} L(q,v)} \pi _Q(W) \right\rangle, 
\end{equation} 
for $(q,v) \in TQ$ and $W \in T_{ \mathbb{F} L(q,v)}(T^*Q)$. Here, $\mathbb{F} L:TQ \rightarrow T^*Q$ is the fiber derivative of $L$, locally given as $ \mathbb{F} L(q,v)=(q, \frac{\partial L}{\partial v}(q,v))$, and $\pi_Q:T^*Q\rightarrow Q$ is the projection map. In local coordinates, the Lagrangian force field reads
\[
\widetilde{F}(q,v) = \left( q, \frac{\partial L}{\partial v}(q,v), F(q,v),0 \right).
\]

Using ${\mathrm d}_DL$ and $\widetilde{F}$ defined above, we can formulate the 
\emph{forced Lagrange--Dirac system} as the following dynamical systems for curves $(q,v,p):[t_0,t_1]\rightarrow  TQ\oplus T^*Q$:
\begin{equation}\label{DiracSys_Mech}
\left((q,p,\dot{q},\dot{p}),{\mathrm d}_DL(q,v)- \widetilde{F}(q,v)\right)\in D_{T^*Q}(q,p).
\end{equation}
From the local expressions reviewed above and the definition of $D_{T^*Q}$, it is clear that \eqref{DiracSys_Mech} is equivalent to the system of equations:
\begin{equation}\label{LDS}
\left\{
\begin{array}{l}
\displaystyle\vspace{0.2cm}\dot q=v\\
\displaystyle\vspace{0.2cm} p=\frac{\partial L}{\partial v}(q,v)\\
\displaystyle\dot{p}=\frac{\partial L}{\partial q}(q,v)+ F(q, \dot  q).
\end{array}\right.
\end{equation}
The equations \eqref{LDS} for the curve $(q,v,p):[t_0,t_1]\rightarrow  TQ\oplus T^*Q$ in the Pontryagin bundle imply the forced Euler--Lagrange equations:
\begin{equation}\label{FEL}
\frac{d}{dt}\frac{\partial L}{\partial \dot q}(q, \dot q) - \frac{\partial L}{\partial q}(q, \dot q)=F(q,\dot q)
\end{equation}
for the curve $q:[t_0,t_1]\rightarrow Q$ in the configuration manifold. 

As we will demonstrate later, an infinite dimensional version of the equations \eqref{DiracSys_Mech}, \eqref{LDS}, and \eqref{FEL} can be developed that extends the canonical geometric structures to the continuum setting while simultaneously accommodating boundary energy flow (see, e.g., Remark \ref{energy_balance_rmk}). This is achieved thanks to an appropriate choice of the dual space, which impacts both the form of the cotangent and Pontryagin bundles, as well as the expression of the canonical symplectic form and canonical Dirac structures.
The infinite dimensional versions of \eqref{LDS} and \eqref{FEL} also incorporate a boundary condition, arising from the fact that the momentum variable
$p$ acquires both interior and boundary components (denoted $\alpha$ and $\alpha_\partial$), as will be discussed in details.

\paragraph{Associated variational structures} An important property of the Lagrange--Dirac system \eqref{DiracSys_Mech} is that its solution curves  $(q,v,p):[t_0,t_1]\rightarrow  TQ\oplus T^*Q$ are the critical trajectories for an action principle, the \textit{Lagrange--d'Alembert-Pontryagin principle}, which consistently extends Hamilton's principle to treat the velocity and momentum curves as independent variables. This action principle is given by:
\begin{equation}
\label{LDAP}
\delta\int_{t_0}^{t_1}\left(L(q,v)+\left<p, \dot q-v \right>\right)dt  + \int_{t_0}^{t_1} \left\langle F(q, \dot  q), \delta q \right\rangle dt=0,
\end{equation}
for arbitrary variations $\delta q$, $\delta v$, $\delta p$, with $\delta q$ vanishing at $t=t_0$ and $t=t_1$. The critical condition resulting from this principle is easily seen to be given by equations \eqref{LDS}. Alternatively, one can derive equation \eqref{FEL} directly by applying the \textit{Lagrange--d'Alembert principle}:
\begin{equation}
\label{LDA}
\delta\int_{t_0}^{t_1} L(q,\dot q)dt  + \int_{t_0}^{t_1} \left\langle F(q, \dot  q), \delta q \right\rangle dt=0,
\end{equation}
for arbitrary variations $\delta q$ vanishing at $t=t_0,t_1$.

\subsection{Overview of the results}

Here, we briefly summarize the infinite dimensional extensions that will be developed throughout the paper, offering an overview and illustrating the analogy with the finite dimensional case discussed above.

\paragraph{Field theories on bundle-valued forms} The first case we consider is the  configuration space $V=\Omega^k(M,E)$, consisting of $E$-valued $k$-forms, where $\pi_{E,M}:E\to M$ is a finite-dimensional vector bundle over $M$. We present two possible realizations of the restricted dual, denoted $V^\star$ and $V^\dagger$, whose most appropriate choice depends on the preferred form of the expression of the power density describing the exchange of energy with the exterior, in terms of the force and velocity variables, see Remark \ref{energy_balance_rmk}. For instance, the choice $V^{\dagger}=\Omega^{m-k}(M,E^*)\times\Omega^{m-k-1}(\partial M,E^*)$ corresponds to using the wedge product. In this context, the Lagrange--Dirac system \eqref{DiracSys_Mech} takes the form
\begin{equation}\label{LDS_intro}
\big((\varphi,\alpha,\alpha_\partial,\dot\varphi,\dot\alpha,\dot\alpha_\partial),{\rm d}_{ D}^{\dagger} L_{\nabla}(\varphi,\nu)-\widetilde{F}^{\dagger}(\varphi,\nu)\big)\in D_{T^{\dagger}V}(\varphi,\alpha,\alpha_\partial)
\end{equation}
for a curve $(\varphi,\nu, \alpha,\alpha_\partial): [t_0,t_1] \to TV \oplus T^{\dagger}V$, where $F^\dagger:TV \rightarrow T^\dagger V$ is a given force, analogous to equation \eqref{F}.

The infinite dimensional version of equations \eqref{FEL} is derived from \eqref{LDS_intro} by solving for the curve $\varphi:[t_0,t_1]\rightarrow V$. These equations are found to be:
\begin{equation}\label{equation1_intro}
\left\{\begin{array}{l}
\displaystyle\frac{\partial }{\partial t}\frac{\partialnew\mathscr L}{\partialnew\dot\varphi} =\frac{\partialnew\mathscr L}{\partialnew\varphi} -(-1)^k\nd^*  \frac{\partialnew\mathscr L}{\partialnew\zeta} +\mathcal F^{\dagger},\vspace{2mm}\\
\displaystyle\mathcal F^{\dagger}_\partial=-\iota_{\partial M}^* \frac{\partialnew\mathscr L}{\partialnew\zeta},
\end{array}\right.
\end{equation}
where $\mathscr{L}$ is the Lagrangian density (from equation \eqref{L_nabla_1}), $\zeta={\rm d}^\nabla \varphi$ is the covariant exterior derivative of $\varphi$, and $\mathcal{F}^\dagger,\mathcal{F}^\dagger_\partial \in V^\dagger$ are the interior and boundary components of $F^\dagger$.  Although the notations will be clarified later, equations \eqref{equation1_intro} show the occurrence of the interior and boundary forces in the evolution equations. These forces contribute to the energy balance as follows:
\[
\frac{d}{dt}\int_M \mathscr{E}= \underbrace{\int _M \dot\varphi \bwedge \mathcal{F}^\dagger}_{\text{spatially distributed contribution}} + \underbrace{\int_{\partial M}\iota_{\partial M}^*\dot\varphi \bwedge \mathcal{F}^\dagger_\partial}_{\text{boundary contribution}}.
\]

\paragraph{Gauge field theories} The second case we consider involves the configuration manifold $\mathcal{C}(P)$ of principal connections of a principal $G$-bundle $\pi_{P,M}:P\rightarrow M$. In this context, with the choice of the restricted dual $\mathcal{C}(P)^{\dagger}=\Omega^{m-1}(M,\tilde{\mathfrak{g}}^*)\times\Omega^{m-2}(\partial M,\tilde{\mathfrak{g}}^*)$, corresponding again to using the wedge product, the Lagrange--Dirac system \eqref{DiracSys_Mech} takes the form:
\begin{equation}\label{LDS_connection_intro}
\big((A,\varsigma,\varsigma_\partial,\dot A,\dot\varsigma,\dot\varsigma_\partial),{\rm d}_{ D}^{\dagger} L(A,\varepsilon)-\widetilde{F}^{\dagger}(A,\varepsilon)\big)\in D_{T^{\dagger}\mathcal{C}(P)}(A,\varsigma,\varsigma_\partial),
\end{equation}
for a curve $(A,\varepsilon, \varsigma,\varsigma_\partial): [t_0,t_1] \to T\mathcal{C}(P) \oplus T^{\dagger}\mathcal{C}(P)$, where $F^\dagger:T\mathcal{C}(P) \rightarrow T^\dagger \mathcal{C}(P)$ is a given force, see \eqref{F}.

When the Yang--Mills Lagrangian is used, the Lagrange--Dirac dynamical system \eqref{LDS_connection_intro} yields the equations:
\begin{equation}\label{YM_intro}
\left\{\begin{array}{l}
\displaystyle\dot E-\delta^A B_A=-J,\vspace{2mm}\\
\boldsymbol\star_\partial\left(\iota_{\partial M}^*(\boldsymbol\star B_A)\right)= j,
\end{array}\right.
\end{equation}
where $\delta^A$ is the codifferential of ${\rm d}^A$, $E=-\dot A$ and $B_A={\rm d}^AA$ represent the non-Abelian electric and magnetic fields. From these, the following relations are derived:
\begin{equation}\label{Bianchi}
{\rm d}^AB_A=0, \qquad \dot B_A=- {\rm d}^A E.
\end{equation}
Equations \eqref{YM_intro} and \eqref{Bianchi}, along with the identification $\rho=\delta^A E$ as the non-Abelian charge density, yield the Yang--Mills equations with current $J$ and surface current $j$, in the space+time decomposition, with respect to the temporal gauge. The currents $J$ and $j$ arise from the interior and boundary components of the force $F^\dagger$ as we will explain in details. In particular, a non-Abelian Poynting theorem, including spatially distributed and boundary sources, will be derived.

\paragraph{Interaction of gauge fields and matter} To describe the interaction between gauge fields and matter, we consider the configuration manifold $Q=\mathcal C(P)\times\Omega^0(M,\tilde{\mathfrak g})$, for which the Lagrange--Dirac dynamical system \eqref{DiracSys_Mech} takes the form
\begin{equation}\label{YMH_intro}
\big((q,p ,\dot q, \dot p),{\rm d}_{ D}^{\dagger} L(q,v)-\widetilde{F}^{\dagger}(q,v)\big)\in D_{T^{\dagger}V}(q,p)
\end{equation}
with $q=(A,\varphi)$, $v=(\nu, \epsilon)$, and $p=(\varsigma,\varsigma_\partial, \alpha, \alpha_\partial )$. A typical example is the Yang--Mills--Higgs Lagrangian, in which case \eqref{YMH_intro} reduces to
\begin{equation*}
\left\{\begin{array}{l}
\displaystyle\dot E-\delta^A B_A+\tilde\varrho_\varphi^*\left(({\rm d}^A\varphi)^{\flat_\kappa}\right)^{\sharp_K}=-J,\vspace{0.2cm}\\
\displaystyle\boldsymbol\star_\partial\left(\iota_{\partial M}^*\left(\boldsymbol\star B_A\right)\right)=j,\vspace{2mm}\\
\end{array}\right.\qquad\left\{\begin{array}{l}
\displaystyle\ddot\varphi+\delta^A\left({\rm d}^A\varphi\right)+\operatorname{grad}_{\kappa}\mathbf V=\beth,\vspace{0.2cm}\\
\displaystyle\boldsymbol\star_\partial\left(\iota_{\partial M}^*\left(\boldsymbol\star{\rm d}^A\varphi\right)\right)=\gimel.
\end{array}\right.
\end{equation*}
to be detailed later.
Together with the equations ${\rm d}^AB_A=0$, $\dot B_A=- {\rm d}^AE$, and $\rho=\delta^A E$, these are the Yang--Mills--Higgs equations with currents $J$, $\beth$ and surface currents $j$, $\gimel$, in the space+time decomposition, with respect to the temporal gauge. These currents arise as the components of the force $F\in T^\dagger Q$ used in \eqref{YMH_intro}.

\subsection{Plan of the paper}

The contents of this paper are organized as follows. Section \ref{sec:vectorbundlekforms} is devoted to develop the Lagrange--Dirac theory for systems on the infinite dimensional Fr\'echet space of vector bundle-valued forms on a manifold with boundary. In particular, two choices of the restricted dual are introduced and the equivalence between them is proven. After that, the restricted Tulczyjew triple is defined in this infinite dimensional setting and, after some technical results, the forced Lagrange--Dirac equations are presented. The main feature is that they naturally account for the boundary energy flow. Next, the local and global energy balances are computed for both choices of the restricted dual and, then, the variational approach is presented, leading to the dynamical equations as a critical curve condition. Lastly, the theoretical results are utilized to model matter fields, such as the Higgs field or the Klein--Gordon field in the space+time decomposition. In section \ref{sec:yangmills}, the previous theory is modified to describe (non-Abelian) gauge fields. The main difference is that the configuration space of the system is the family of principal connections on a given principal bundle, which is no longer a vector space, but an infinite-dimensional affine space. The canonical trivialization of its tangent bundle allows for developing the theory analogous to the linear case with the appropriate adjustments. As a particular instance, the Yang--Mills equations are considered. To conclude, the theories for linear and affine systems are gathered together to describe gauge fields interacting with matter, thus obtaining the Yang--Mills--Higgs equations as a particular case.

\paragraph{Notations} In the following, every manifold or map is assumed to be smooth. Given an $m$-dimensional manifold with boundary, $M$, its boundary is denoted by $\partial M$ and the natural inclusion is denoted by $\iota_{\partial M}:\partial M\to M$. Let $\pi_{E,M}:E\to M$ be a vector bundle and $\pi_{E^*,M}:E^*\to M$ be its dual bundle. The space of sections of $\pi_{E,M}:E\to M$ is denoted by $\Gamma(E)$. Given $\xi\in\Gamma(E)$ and $\eta\in\Gamma(E^*)$, their contraction is denoted by $\eta\cdot\xi\in C^\infty(M)=\Omega^0(M)$. Lastly, given another vector bundle $\pi_{E',M}:E'\to M$, the tensor product between $\pi_{E,M}:E\to M$ and $\pi_{E',M}:E'\to M$ is denoted by $\pi_{E\otimes E',M}:E\otimes E'\to M$.

\section{Lagrange--Dirac dynamical systems on the family of vector bundle-valued forms}\label{sec:vectorbundlekforms}

The aim of this section is to develop the theory of infinite-dimensional Lagrange--Dirac dynamical systems on the space of vector bundle-valued $k$-forms on a compact manifold $M$ with boundary. We begin in \S\ref{Alternative_choice} by describing two versions of restricted dual spaces, denoted $V^\star$ and $V^\dagger$, which are then used to define the restricted cotangent and Pontryagin bundles. In \S\ref{canonical}, based on the choice of dual, we define the canonical symplectic forms and associated Dirac structures, which include boundary terms that will be crucial for describing systems with energy flow through the boundary. After presenting a divergence theorem for bundle-valued forms in \S\ref{sec:divergenceVectValued_kforms}, we show in \S\ref{sec:partialderivativesvectorkforms} that the Dirac differential of Lagrangian functions defined via densities takes values in the restricted dual space. This is fundamental for defining the associated infinite dimensional Lagrange--Dirac dynamical systems, in \S\ref{sec:LDVectBundleValued_kforms}, where we compute the evolution equations, boundary conditions, and energy balance. 
In \S\ref{VP}, we provide the variational principles governing these equations and boundary conditions, analogous to \eqref{LDAP} and \eqref{LDA}. Finally, we illustrate these developments with matter fields for $k=0$ in \S\ref{sec:particlefields}.

\medskip
Consider a finite-dimensional vector bundle $\pi_{E,M}:E\to M$. We define
\begin{equation*}
V=\Omega^k(M,E)=\Gamma\big(\textstyle\bigwedge^k T^*M\otimes E\big),\qquad 0\leq k\leq m,
\end{equation*}
as the Fr\'echet space of $E$-valued $k$-forms on $M$. The space of $(r,s)$-skew-symmetric tensor fields on $M$ taking values in the dual bundle $E^*$ is denoted by
\begin{equation*}
\Lambda_r^s(M,E^*)=\Gamma\left(\textstyle\bigwedge^s TM\otimes\bigwedge^r T^*M\otimes E^*\right),\qquad 0\leq r,s\leq m.
\end{equation*}
Similarly, we write $\Lambda_r^s(M)=\Gamma\left(\textstyle\bigwedge^s TM\otimes\bigwedge^r T^*M\right)$ for the space of $(r,s)$-skew- symmetric tensor fields on $M$. 

Throughout this paper, we use bold symbols to indicate the contraction and the wedge product in both sets of indices. Namely,
\begin{align*}
& \chi\bcdot\varphi=\left(\tilde\chi\cdot\tilde\varphi\right)(\eta\cdot\xi),\quad  & &\varphi=\tilde\varphi\otimes\xi\in\Omega^k(M,E),~\chi=\tilde\chi\otimes\eta\in\Lambda_m^k(M,E^*),\\
& \chi\bwedge\varphi=\left(\tilde\chi\wedge\tilde\varphi\right)(\eta\cdot\xi), & &\varphi=\tilde\varphi\otimes\xi\in\Omega^k(M,E),~\chi=\tilde\chi\otimes\eta\in\Omega^{m-k}(M,E^*).
\end{align*}
We stress that $\bwedge$ is a bilinear map taking values in real- (not bundle)-valued forms, i.e.,
\[
\bwedge: \Omega ^k(M, E) \times \Omega ^{m-k}(M, E^*) \rightarrow \Omega ^m(M).
\]

\begin{remark}[Tensors on the boundary]\rm
When dealing with vector bundle-valued tensors on the boundary, we need to consider the pullback bundle: $\pi_{\iota_{\partial M}^* E^*,\partial M}:\iota_{\partial M}^* E^*\to\partial M$, where $\iota_{\partial M}:\partial M\to M$ is the inclusion. For brevity, we will omit the pullback notation, simply writing $E^*\equiv\iota_{\partial M}^*E^*$.
\end{remark}

\subsection{Alternative choices for the restricted dual}\label{Alternative_choice}

We now introduce the restricted dual of the Fréchet space 
$V=\Omega^k(M,E)$ as a subspace of its topological dual, $V'$, defined through a duality pairing via integration over the manifold $M$ and its boundary $\partial M$. There are two choices for this \emph{restricted dual} of $V$; namely,
\begin{enumerate}
    \item $V^\star=\Lambda_m^k(M,E^*)\times\Lambda_{m-1}^k(\partial M,E^*)$ with the pairing given by
    \begin{equation*}
    \blangle(\alpha,\alpha_\partial),\varphi\brangle_\star=\int_M\alpha\bcdot\varphi+\int_{\partial M}\alpha_\partial\bcdot\iota_{\partial M}^*\varphi,\qquad(\alpha,\alpha_\partial)\in V^\star,~\varphi\in V.
    \end{equation*}
    \item $V^\dagger=\Omega^{m-k}(M,E^*)\times\Omega^{m-k-1}(\partial M,E^*)$ with the duality pairing given by
    \begin{equation*}
    \blangle(\alpha,\alpha_\partial),\varphi\brangle_\dagger=\int_M\varphi\bwedge\alpha+\int_{\partial M}\iota_{\partial M}^*\varphi\bwedge\alpha_\partial,\qquad(\alpha,\alpha_\partial)\in V^\dagger,~\varphi\in V.
    \end{equation*}
\end{enumerate}
These two choices yield two different, yet equivalent, formulations of the equations of motion and boundary conditions. Depending on the structure of the Lagrangian, one formulation may be more convenient or advantageous to use.
Observe that both choices agree for $k=0$, i.e., when $V=\Omega^0(M,E)=\Gamma(\pi_{E,M})$.

These definitions give continuous injections of $V^\star$ and $V^\dagger$ into the topological dual:
\begin{equation}\label{Psi_star_dagger}
\Psi _\star:V^\star \rightarrow V' \quad\text{and}\quad \Psi _\dagger: V^\dagger \rightarrow V',
\end{equation} 
which are defined as
\begin{equation*}
\Psi_\star(\alpha,\alpha_\partial)(\varphi)=\langle(\alpha,\alpha_\partial),\varphi\rangle_\star\quad\text{and}\quad\Psi_\dagger(\beta,\beta_\partial)(\varphi)=\langle(\beta,\beta_\partial),\varphi\rangle_\dagger,
\end{equation*}
respectively, for each $\varphi\in V$, $(\alpha,\alpha_\partial)\in V^\star$, and $(\beta,\beta_\partial)\in V^\dagger$. Below we shall exhibit a canonical isomorphism between the two choices $V^\star$ and $V^\dagger$ of the restricted dual.

Given $x\in M$ and $U=U_1\wedge\dots\wedge U_k\in\bigwedge^k T_xM$, we define the left interior multiplication as
$$i_{U}:\textstyle\bigwedge^m T_x^*M\to\bigwedge^{m-k}T_x^*M,\quad \mu\mapsto i_U\mu,$$
where $\left(i_U\mu\right)(u_1,...,u_{m-k})=\mu(U_1,\dots,U_k,u_1,...,u_{m-k})$ for each $u_1,\dots,u_{m-k}\in T_xM$. Similarly, for each $\alpha=\alpha_1\wedge\dots\wedge\alpha_k\in\bigwedge^k T_x^*M$, the left interior multiplication is defined as
$$i_\alpha:\textstyle\bigwedge^m T_xM\to\bigwedge^{m-k}T_xM,\quad U\mapsto i_\alpha U,$$
where $(i_\alpha U)(\beta_1,\dots,\beta_{m-k})=U(\alpha_1,\dots,\alpha_k,\beta_1,\dots,\beta_{m-k})$ for each $\beta_1,\dots,\beta_{m-k}\in T_x^*M$.

\begin{lemma}\label{lemma:invPhiE}\rm
There exists a canonical vector bundle isomorphism over the identity, $\operatorname{id}_M$, given by
\begin{equation}\label{eq:PhiE}
\Phi_E:\textstyle\bigwedge^k TM\otimes\bigwedge^m T^*M\otimes E^*\to\bigwedge^{m-k}T^*M\otimes E^*,\quad U\otimes\mu\otimes\eta\mapsto i_U\mu\otimes\eta.
\end{equation}
Furthermore, with the aid of a Riemannian metric $g$ on $M$, the inverse of $\Phi_E$ may be expressed as
\begin{equation*}
\Phi_E^{-1}:\textstyle\bigwedge^{m-k}T^*M\otimes E^*\to\bigwedge^k TM\otimes\bigwedge^m T^*M\otimes E^*,\quad\alpha\otimes\eta\mapsto(-1)^{k(m-k)}(\star\alpha)^\sharp\otimes\mu_g\otimes\eta,
\end{equation*}
where $\sharp:T^*M\to TM$ and $\flat:TM\to T^*M$, respectively, denote the sharp and flat isomorphisms defined by the Riemannian metric, $\mu_g\in\Omega^m(M)$ is the Riemannian volume form and $\star:\Omega^k(M)\to\Omega^{m-k}(M)$ is the Hodge star operator, which is defined by the condition $g( \alpha , \beta ) \mu_g = \alpha\wedge\star\beta$ for each $\alpha,\beta\in\Omega^k(M)$.
\end{lemma}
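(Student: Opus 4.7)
The plan is to reduce immediately to the case of a trivial bundle, since the $E^{*}$ factor is tensored along unchanged in both source and target. Namely, it suffices to establish the canonical isomorphism
\[
\Phi:\textstyle\bigwedge^{k}TM\otimes\bigwedge^{m}T^{*}M\to\bigwedge^{m-k}T^{*}M,\qquad U\otimes\mu\mapsto i_{U}\mu,
\]
fiberwise over each $x\in M$, and then tensor the whole picture with $E^{*}$ to recover $\Phi_{E}$. Functoriality of the tensor product with the identity on $E^{*}$ will take care of the fact that $\Phi_{E}$ is a vector bundle morphism over $\operatorname{id}_{M}$.

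To define $\Phi$ I would first note that $(U,\mu)\mapsto i_{U}\mu$ is bilinear: on decomposable elements $U=U_{1}\wedge\cdots\wedge U_{k}$ the formula $(i_{U}\mu)(u_{1},\ldots,u_{m-k})=\mu(U_{1},\ldots,U_{k},u_{1},\ldots,u_{m-k})$ is clearly $k$-linear and alternating in the $U_{i}$, hence descends to $\bigwedge^{k}T_{x}M$, and is manifestly linear in $\mu$. Bijectivity on each fiber is then settled by a dimension count: both $\bigwedge^{k}T_{x}M\otimes\bigwedge^{m}T_{x}^{*}M$ and $\bigwedge^{m-k}T_{x}^{*}M$ have dimension $\binom{m}{k}=\binom{m}{m-k}$, so it is enough to check injectivity. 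Choosing a local frame $\{e_{1},\ldots,e_{m}\}$ with dual coframe $\{e^{1},\ldots,e^{m}\}$ and setting $\mu=e^{1}\wedge\cdots\wedge e^{m}$, a direct calculation shows that $\Phi(e_{I}\otimes\mu)=\pm\,e^{I^{c}}$ for each multi-index $I$ of length $k$, so $\Phi$ carries a basis to a basis (up to signs) and is therefore a linear isomorphism at every point.

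For the explicit inverse I would use the Riemannian metric $g$ through the identities $i_{V}(\star\beta)=\star(V^{\flat}\wedge\beta)$ for $V\in TM$ and $\beta\in\Omega^{\bullet}(M)$, and $\star\star=(-1)^{k(m-k)}\operatorname{id}$ on $\bigwedge^{m-k}T^{*}M$. Given $\alpha\in\bigwedge^{m-k}T^{*}_{x}M$, one iterates the first identity on decomposable $(\star\alpha)^{\sharp}$ to obtain $i_{(\star\alpha)^{\sharp}}\mu_{g}$ up to a computable sign, and then applies $\star\star=(-1)^{k(m-k)}$ to reduce to $\alpha$, which pins down the prefactor in the statement. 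The main obstacle I anticipate is strictly bookkeeping: the paper's convention places the $U_{i}$ on the \emph{left} of the arguments, so that iterated interior products give $i_{U_{1}\wedge\cdots\wedge U_{k}}=i_{U_{k}}\circ\cdots\circ i_{U_{1}}$, and the natural extension of $\sharp$ to $\bigwedge^{k}$ introduces a $(-1)^{k(k-1)/2}$ when the order of the factors is reversed; carefully aligning these conventions with the sign in $\star\star$ is what produces exactly the factor $(-1)^{k(m-k)}$ in the formula for $\Phi_{E}^{-1}$. No deep ingredient beyond Hodge duality is needed.
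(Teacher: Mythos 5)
Your proposal is correct, but it takes a genuinely different route from the paper's proof in both halves. The paper never argues bijectivity abstractly: it simply verifies that the displayed map $\alpha\otimes\eta\mapsto(-1)^{k(m-k)}(\star\alpha)^\sharp\otimes\mu_g\otimes\eta$ is a two-sided inverse of $\Phi_E$, using for one composition the identity $i_U\mu_g=(-1)^{m-k}\star U^\flat$ for $U\in\bigwedge^k TM$ (imported from Warner) together with $\star\star=(-1)^{k(m-k)}$, and for the other composition an inner-product computation showing $g\bigl(i_{(\star\alpha)^\sharp}\mu_g,\beta\bigr)=(-1)^{k(m-k)}g(\alpha,\beta)$. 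Your metric-free first half --- bilinearity descending to the exterior powers, the dimension count $\binom{m}{k}=\binom{m}{m-k}$, and $\Phi(e_I\otimes\mu)=\pm\,e^{I^c}$ in a local frame --- buys two things the paper leaves implicit: it makes the canonicity (metric-independence) of $\Phi_E$ manifest, which the paper only addresses in a remark afterwards, and it means you need to check just one composition to identify the inverse, since a one-sided inverse of a known bijection is the inverse; the paper, by contrast, must check both compositions because that verification is also its proof of bijectivity. One convention point to repair when you write out your second half: with the paper's Hodge convention $g(\alpha,\beta)\mu_g=\alpha\wedge\star\beta$, the correct identity is $i_V(\star\beta)=\star(\beta\wedge V^\flat)$, so your version $\star(V^\flat\wedge\beta)$ is off by $(-1)^{\deg\beta}$; wedging $V_j^\flat$ on the right at each step makes the iteration sign-free, giving $i_{V_k}\cdots i_{V_1}\mu_g=\star\bigl(V_1^\flat\wedge\cdots\wedge V_k^\flat\bigr)$ (consistent with the paper's left-interior convention $i_{V_1\wedge\cdots\wedge V_k}=i_{V_k}\circ\cdots\circ i_{V_1}$, which you state correctly), whence $i_{(\star\alpha)^\sharp}\mu_g=\star\star\alpha=(-1)^{k(m-k)}\alpha$ and the prefactor $(-1)^{k(m-k)}$ emerges exactly as you predicted, with no reversal sign $(-1)^{k(k-1)/2}$ appearing at all. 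Since you explicitly flagged this sign alignment as the remaining bookkeeping, this is a slip in the sketch rather than a gap in the argument.
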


\begin{proof}
The statement of \cite[Chapter 2, Ex. 14]{Wa1983}, as well as the fact that $\star\mu_g=(-1)^{m-1}$, yield $i_U\mu_g=(-1)^{(m-1)}\star U^\flat$ for $U\in TM$. This extends to multivectors straightforwardly, i.e., $i_U\mu_g=(-1)^{m-k}\star U^\flat$ for $U\in\bigwedge^k TM$. Hence, we conclude:
\begin{enumerate}[(i)]
    \item $\left(\Phi_E^{-1}\circ\Phi_E\right)(U\otimes\mu_g\otimes\eta)=(-1)^{k(m-k)}(\star(i_U\mu_g))^\sharp\otimes\mu_g\otimes\eta=(-1)^k(\star(\star U^\flat))^\sharp\otimes\mu_g\otimes\eta=( U^\flat)^\sharp\otimes\mu_g\otimes\eta=U\otimes\mu_g\otimes\eta$, where we have used that $\star\star=(-1)^{k(m-k)}$.
    \item $\left(\Phi_E\circ\Phi_E^{-1}\right)(\alpha\otimes\eta)=(-1)^{k(m-k)}i_{(\star\alpha)^\sharp}\mu_g\otimes\eta=\alpha\otimes\eta$, where we have used that, for each $\beta\in\bigwedge^{m-k}T^*M$, we have
    \begin{align*}
    g(i_{(\star\alpha)^\sharp}\mu_g,\beta) & =g(\mu_g,\star\alpha\wedge\beta)=(-1)^{k(m-k)}g(\mu_g,\alpha\wedge\star\beta)=g(\mu_g,\star\beta\wedge\alpha)\\
    & =g(i_{(\star\beta)^\sharp}\mu_g,\alpha)=(-1)^{m-k}g(\star\star\beta,\alpha)=(-1)^{k(m-k)}g(\alpha,\beta).
    \end{align*}
\end{enumerate}
\end{proof}

\begin{remark}[Multi-index notation]\label{remark:multindex}\rm
Let $(x^i)$, $1\leq i\leq m$, be local coordinates on $M$. For simplicity, we denote multi-indices by $I=(i_1,\dots,i_k)\in\mathbb N^k$, where $1\leq i_1<\dots<i_k\leq m$. The \emph{length} of $I$ is denoted by $|I|=k$. For $1\leq r\leq k$, we set $I_r=(i_1,\dots,i_{r-1},i_{r+1},\dots,i_k)$. Observe that $|I_r|=|I|-1$. We will denote $\partial_I=\partial_{i_1}\wedge\dots\wedge\partial_{i_k}$, $dx^I=dx^{i_1}\wedge\dots\wedge dx^{i_k}$, $d^m x=dx^1\wedge\dots\wedge dx^m$, $\boldsymbol\partial_m=\partial_1\wedge\dots\wedge\partial_m$ and $d_I^{m-k}x={i}_{\partial_I} d^m x$. As usual, the Einstein summation convention will be utilized for repeated indices and multi-indices.
\end{remark}

\begin{remark}\rm Of course the map $ \Phi_E ^{-1} $ does not depend on the choice of a metric, which is only a convenient way to write this map explicitly. In local coordinates, it may be written without using a metric; namely,
\begin{equation*}
\Phi_E^{-1}:\textstyle\bigwedge^{m-k}T^*M\otimes E^*\to\bigwedge^k TM\otimes\bigwedge^m T^*M\otimes E^*,\quad\alpha\otimes\eta\mapsto(-1)^{k(m-k)}(i_\alpha\boldsymbol\partial_m)\otimes d^m x\otimes\eta,
\end{equation*}
where we have used the notations introduced in the previous remark.
\end{remark}

An analogous result to the previous lemma holds on $\partial M$ by using the Riemannian metric induced on the boundary, $g_\partial=\iota_{\partial M}^*g$. More specifically, we have a canonical isomorphism given by
\begin{equation*}
\Phi_{E,\partial}:\textstyle\bigwedge^kT\partial M\otimes\bigwedge^{m-1}T^*\partial M\otimes E^*\to\bigwedge^{m-k-1}T^*\partial M\otimes E^*,\quad U_\partial\otimes\mu_\partial\otimes\eta\mapsto i_{U_\partial}\mu_\partial\otimes\eta,
\end{equation*}
whose inverse,
\begin{equation*}
\Phi_{E,\partial}^{-1}:\textstyle\bigwedge^{m-k-1}T^*\partial M\otimes E^*\to\bigwedge^k T\partial M\otimes\bigwedge^{m-1}T^*\partial M\otimes E^*,
\end{equation*}
may be written as
\begin{equation*}
\alpha_\partial\otimes\eta\mapsto(-1)^{k(m-k-1)}(\star_\partial\,\alpha_\partial)^{\sharp_\partial}\otimes\mu_g^\partial\otimes\eta,
\end{equation*}
where $\sharp_\partial:T^*\partial M\to T\partial M$ and $\flat_\partial:T\partial M\to T^*\partial M$, respectively, denote the sharp and flat isomorphisms defined by $g_\partial$, and $\star_\partial:\Omega^k(\partial M)\to\Omega^{m-k-1}(\partial M)$ is the Hodge star operator on the boundary.
Similarly, $\mu_g^\partial=\iota_{\partial M}^*(i_n\mu_g)\in\Omega^{m-1}( \partial M)$ is the Riemannian volume form on the boundary (cf. \cite[Corollary 15.34]{Le2012}), where $n\in\mathfrak X(M)|_{\partial M}$ is the outward pointing, unit, normal vector field on $\partial M$.

From Lemma \ref{lemma:invPhiE} and the previous considerations, we get the following result.

\begin{proposition}\label{isomorphic_duals}\rm
Both choices of the restricted dual are equivalent via the following isomorphisms:
\begin{equation}\label{isomorphism_metric}
\begin{split}
    &V^\star\to V^\dagger,\quad(\alpha,\alpha_\partial)\mapsto\left(\Phi_E(\alpha),\Phi_{E,\partial}(\alpha_\partial)\right),\\
    &V^\dagger \to V^\star, \quad (\beta, 
    \beta_\partial) \mapsto \left(\Phi^{-1}_E(\beta),\Phi^{-1}_{E,\partial}(\beta_\partial)\right).
\end{split}
\end{equation}
Moreover, in terms of \eqref{Psi_star_dagger}, we have $\Psi_\dagger \circ\left(\Phi_E,\Phi_{E,\partial}\right) = \Psi_\star$ and $\Psi_\star \circ\left(\Phi^{-1}_E,\Phi^{-1}_{E,\partial}\right) = \Psi_\dagger$.
\end{proposition}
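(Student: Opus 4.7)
The plan is to split the statement into two parts --- that \eqref{isomorphism_metric} defines mutually inverse linear bijections, and that they intertwine the two pairings --- and then reduce the second to a pointwise algebraic identity that is integrated over $M$ and $\partial M$.

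For the bijection part, Lemma \ref{lemma:invPhiE} already gives that $\Phi_E$ is a fiberwise vector bundle isomorphism over $\operatorname{id}_M$, and the paragraph following the lemma records the analogous statement for $\Phi_{E,\partial}$ over $\operatorname{id}_{\partial M}$. Passing to sections in each factor of $\Lambda_m^k(M,E^*)\times\Lambda_{m-1}^k(\partial M,E^*)$ then yields a linear bijection $V^\star\to V^\dagger$ whose inverse is given by the second formula in \eqref{isomorphism_metric}.

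For the intertwining of the two pairings, by linearity it suffices to verify the pointwise identity
\begin{equation*}
\alpha\bcdot\varphi \;=\; \varphi \bwedge \Phi_E(\alpha)
\end{equation*}
for simple tensors $\alpha=U\otimes\mu\otimes\eta\in\bigwedge^k T_xM\otimes\bigwedge^m T_x^*M\otimes E_x^*$ and $\varphi=\tilde\varphi\otimes\xi\in\bigwedge^k T_x^*M\otimes E_x$. Unwinding the definitions of $\bcdot$ and $\bwedge$ given just before Proposition \ref{isomorphic_duals}, both sides factor as the scalar $(\eta\cdot\xi)$ times an $m$-covector at $x$, so the identity collapses to the classical relation
\begin{equation*}
(\tilde\varphi\cdot U)\,\mu \;=\; \tilde\varphi\wedge i_U\mu,
\end{equation*}
which I would check directly in an adapted local basis using the multi-index notation of Remark \ref{remark:multindex}. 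Integrating this pointwise equality over $M$ gives the interior half of $\Psi_\star=\Psi_\dagger\circ(\Phi_E,\Phi_{E,\partial})$. The boundary half follows from the same identity on $\partial M$ with $\iota_{\partial M}^*\varphi$ and $\Phi_{E,\partial}$ in place of $\varphi$ and $\Phi_E$, integrated over $\partial M$. Summing the two contributions yields the first compatibility; the second, $\Psi_\star\circ(\Phi_E^{-1},\Phi_{E,\partial}^{-1})=\Psi_\dagger$, is then obtained from it by substituting $(\alpha,\alpha_\partial)=(\Phi_E^{-1}(\beta),\Phi_{E,\partial}^{-1}(\beta_\partial))$ and using the bijectivity established in the first step.

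No genuine obstacle arises: the entire argument rests on the classical identity $\tilde\varphi\wedge i_U\mu=(\tilde\varphi\cdot U)\mu$ between the interior product with a multivector and the natural pairing scaled by a top form. The only care required is a consistent treatment of sign conventions in $\bcdot$, $\bwedge$, and $i_U$, which combine cleanly because the factor $(\eta\cdot\xi)$ coming from the $E$/$E^*$-part appears symmetrically on both sides of the identity.
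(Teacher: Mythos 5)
Your proof is correct and follows essentially the same route as the paper: bijectivity is exactly what Lemma \ref{lemma:invPhiE} and its boundary analogue provide, and the compatibility of the two pairings reduces to the pointwise identity $\alpha\bcdot\varphi=\varphi\bwedge\Phi_E(\alpha)$, which is precisely the first identity of Lemma \ref{lemma:PhiEcontractions} and is verified there by the same local-coordinate computation you outline. No gaps.
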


Of course, both pairings are weakly non-degenerate and, thus, the restricted cotangent bundles and the restricted iterated bundles may be defined as follows. The \emph{restricted cotangent bundles} are given by
\begin{equation*}
\begin{array}{l}
T^\star V=V\times V^\star=\Omega^k(M,E)\times\Lambda_m^k(M,E^*)\times\Lambda_{m-1}^k(\partial M,E^*),\vspace{0.1cm}\\
T^\dagger V=V\times V^\dagger=\Omega^k(M,E)\times\Omega^{m-k}(M,E^*)\times\Omega^{m-k-1}(\partial M,E^*),
\end{array}
\end{equation*}
and both of them are subbundles of $T'V$ through the identification given by the corresponding pairing. 
Similarly, the \emph{restricted iterated bundles} are given by
\begin{equation*}
\begin{array}{ll}
T^\star(TV)=V\times V\times V^\star\times V^\star,\qquad & T^\dagger(TV)=V\times V\times V^\dagger\times V^\dagger,\vspace{0.1cm}\\
T(T^\star V)=V\times V^\star\times V\times V^\star, & T\left(T^\dagger V\right)=V\times V^\dagger\times V\times V^\dagger,\vspace{0.1cm}\\
T^\star(T^\star V)=V\times V^\star\times V^\star\times V, & T^\dagger\left(T^\dagger V\right)=V\times V^\dagger\times V^\dagger\times V.
\end{array}
\end{equation*}
To conclude, the \emph{restricted Pontryagin bundles} are given by
\begin{equation*}
\begin{array}{l}
T(T^\star V)\oplus T^\star(T^\star V)=V\times V^\star\times(V\times V^\star\times V^\star\times V)\;\left(\subset T(T^\star V)\oplus T'(T^\star V)\right),\vspace{0.1cm}\\
T\left(T^\dagger V\right)\oplus T^\dagger\left(T^\dagger V\right)=V\times V^\dagger\times(V\times V^\dagger\times V^\dagger\times V)\;\left(\subset T\left(T^\dagger V\right)\oplus T'\left(T^\dagger V\right)\right).
\end{array}
\end{equation*}

\subsection{Canonical forms, Tulczyjew triples and canonical Dirac structures}
\label{canonical}

There exist two manners of introducing the canonical symplectic form and the Tulczyjew triple corresponding to the possible choices of the restricted dual, from which we get two distinct canonical Dirac structures. Thanks to our choice of restricted dual spaces, the canonical symplectic forms and associated Dirac structures include boundary terms that will play a fundamental role in describing systems with energy flow through the boundary.

\begin{definition}\rm\label{DefCanForms_VectBundValued}
Associated with the choice of the restricted dual space, we define the canonical one- and two-forms as follows:
\begin{itemize}
\item[(i)] For $V^\star=\Lambda_m^k(M,E^*)\times\Lambda_{m-1}^k(\partial M,E^*)$, the \emph{canonical one-form}, $\Theta_{T^\star V}\in\Omega^1(T^\star\Omega^k(M,E))$, is defined as
\[
\Theta_{T^\star V}(z) \cdot \delta{z}=\langle z, T_{z}\pi^\star_{V}(\delta z)\rangle_\star, \qquad z \in T^\star V,~\delta z\in T_{z}(T^\star V),
\]
where $\pi^\star_{V}: T^\star V \to V$ is the natural projection. Furthermore, the \emph{canonical two-form} on $T^\star V$ is defined as $\Omega_{T^\star V}=-{\rm d}\Theta_{T^\star V}\in\Omega^2(T^\star\Omega^k(M,E))$.
\vskip 3pt
\item[(ii)] For $V^\dagger=\Omega ^{m-k}(M,E^*) \times \Omega ^{m-k-1}( \partial M,E^*)$, the \emph{canonical one-form}, $\Theta_{T^\dagger V}\in\Omega^1(T^\dagger\Omega^k(M,E))$, is defined as
\[
\Theta_{T^\dagger V}(z) \cdot \delta{z}=\langle z, T_{z}\pi^\dagger_{V}(\delta z)\rangle_\dagger,\qquad z \in T^\dagger V,~\delta z\in T_{z}\big(T^\dagger V\big),
\]
where $\pi^\dagger_{V}: T^\dagger V \to V$ is the natural projection. Furthermore, the \emph{canonical two-form} on $T^\dagger V$ is defined as $\Omega_{T^\dagger V}=-{\rm d}\Theta_{T^\dagger V}\in\Omega^2\big(T^\dagger\Omega^k(M,E)\big)$. 
\end{itemize}
\end{definition}

Note that $ \Theta _{T^\star V}$ is a smooth one-form on the Fr\'echet space $T^\star V$ and, thus, the exterior derivative can be computed in the usual sense. The same holds for $\Theta_{T^\dagger V}$. The explicit expressions of the canonical one-forms are
\begin{equation*}
\Theta_{T^\star V}(\varphi,\alpha,\alpha_\partial)\cdot(\delta\varphi,\delta\alpha,\delta\alpha_{\partial})=\blangle (\varphi, \alpha,\alpha_\partial),\delta\varphi\brangle_\star=\int_{ M}\alpha\bcdot\delta\varphi+\int_{\partial M}\alpha_\partial\bcdot\iota_{\partial M}^*\delta\varphi,
\end{equation*}
for each $(\varphi,\alpha,\alpha_\partial)\in T^\star V$ and $(\delta\varphi,\delta\alpha,\delta\alpha_\partial)\in T_{(\varphi,\alpha,\alpha_\partial)}(T^\star V)$, and
\begin{equation*}
\Theta_{T^\dagger V}(\varphi,\alpha,\alpha_\partial)\cdot(\delta\varphi,\delta\alpha,\delta\alpha_{\partial})=\blangle (\varphi, \alpha,\alpha_\partial),\delta\varphi\brangle_\dagger=\int_{ M}\delta\varphi\bwedge\alpha+\int_{\partial M}\iota_{\partial M}^*\delta\varphi\bwedge\alpha_\partial,
\end{equation*}
for each $(\varphi,\alpha,\alpha_\partial)\in T^\dagger V$ and $(\delta\varphi,\delta\alpha,\delta\alpha_\partial)\in T_{(\varphi,\alpha,\alpha_\partial)}\left(T^\dagger V\right)$.
In the same vein, the two-forms are explicitly given by
\begin{equation*}
\Omega_{T^\star V}(\varphi,\alpha,\alpha_\partial)((\dot\varphi,\dot\alpha,\dot\alpha_\partial),(\delta\varphi,\delta\alpha,\delta\alpha_\partial))=\blangle(\delta\alpha,\delta\alpha_\partial),\dot\varphi\brangle_\star-\blangle(\dot\alpha,\dot\alpha_\partial),\delta\varphi\brangle_\star,
\end{equation*}
for each $(\varphi,\alpha,\alpha_\partial)\in T^\star V$ and $(\delta\varphi,\delta\alpha,\delta\alpha_\partial), (\dot\varphi,\dot\alpha,\dot\alpha_\partial) \in T_{(\varphi,\alpha,\alpha_\partial)}(T^\star V)\simeq V \times V^{\star}$, and 
\begin{equation*}
\Omega_{T^\dagger V}(\varphi,\alpha,\alpha_\partial)((\dot\varphi,\dot\alpha,\dot\alpha_\partial),(\delta\varphi,\delta\alpha,\delta\alpha_\partial))=\blangle(\delta\alpha,\delta\alpha_\partial),\dot\varphi\brangle_\dagger-\blangle(\dot\alpha,\dot\alpha_\partial),\delta\varphi\brangle_\dagger,
\end{equation*}
for each $(\varphi,\alpha,\alpha_\partial)\in T^\dagger V$ and $(\delta\varphi,\delta\alpha,\delta\alpha_\partial),(\dot\varphi,\dot\alpha,\dot\alpha_\partial)\in T_{(\varphi,\alpha,\alpha_\partial)}\left(T^\dagger V\right)\simeq V\times V^\dagger$. From these expressions, the associated flat maps can be readily derived and are found to be isomorphisms when restricted dual spaces are used, as stated below.

\begin{proposition}\rm
The flat map of the canonical symplectic form defines a vector bundle morphism over the identity, namely:
\begin{itemize}
\item[(i)] Under the identification given by the pairing $\langle\cdot,\cdot\rangle_\star$, it reads
\begin{equation*}
\Omega_{T^\star V}^\flat: T(T^\star V)\to T^\star(T^\star V),\quad(\varphi,\alpha,\alpha_\partial,\dot\varphi,\dot\alpha,\dot\alpha_\partial)\mapsto(\varphi,\alpha,\alpha_\partial, -\dot\alpha,-\dot\alpha_\partial,\dot\varphi).
\end{equation*}
\item[(ii)] Under the identification given by the pairing $\langle\cdot,\cdot\rangle_\dagger$, it reads
\begin{equation*}
\Omega_{T^\dagger V}^\flat: T\big(T^\dagger V\big)\to T^\dagger\big(T^\dagger V\big),\quad(\varphi,\alpha,\alpha_\partial,\dot\varphi,\dot\alpha,\dot\alpha_\partial)\mapsto(\varphi,\alpha,\alpha_\partial, -\dot\alpha,-\dot\alpha_\partial,\dot\varphi).
\end{equation*}
\end{itemize}
\end{proposition}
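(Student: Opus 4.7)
The plan is to derive the two flat-map formulas by direct differentiation of the canonical one-forms $\Theta_{T^\star V}$ and $\Theta_{T^\dagger V}$, then read off the image via the restricted duality pairings. Since both one-forms are bilinear expressions---linear in the cotangent coordinate $(\alpha,\alpha_\partial)$ and linear in the tangent component $\delta\varphi$---and the underlying cotangent bundle is a Fr\'echet vector space, the exterior derivative admits an immediate closed-form expression, so nothing subtle about distributional regularity needs to be invoked.

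First I would fix two tangent vectors $X=(\dot\varphi,\dot\alpha,\dot\alpha_\partial)$ and $Y=(\delta\varphi,\delta\alpha,\delta\alpha_\partial)$, viewed as constant vector fields on $T^\star V$ so that $[X,Y]=0$, and apply $\mathrm{d}\Theta_{T^\star V}(X,Y)=D_X[\Theta_{T^\star V}(Y)]-D_Y[\Theta_{T^\star V}(X)]$. Because $\Theta_{T^\star V}$ is independent of the $\varphi$-component of the base point and linear in the $(\alpha,\alpha_\partial)$-component, the directional derivative $D_X[\Theta_{T^\star V}(Y)]$ is obtained simply by replacing $(\alpha,\alpha_\partial)$ with $(\dot\alpha,\dot\alpha_\partial)$ inside the integrals defining $\Theta_{T^\star V}(Y)$, and analogously for $\Theta_{T^\star V}(X)$. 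Using $\Omega_{T^\star V}=-\mathrm{d}\Theta_{T^\star V}$, this reproduces the explicit expression for $\Omega_{T^\star V}$ already displayed in the text. The computation in the $\dagger$ case is structurally identical, with $\bcdot$ replaced by $\bwedge$.

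Next I would read off the flat map by matching the symplectic pairing against the natural action of the restricted cotangent fiber $T^\star_{(\varphi,\alpha,\alpha_\partial)}(T^\star V)\simeq V^\star\times V$, in which an element $(\beta,\psi)$ acts on $(\delta\varphi,\delta\alpha,\delta\alpha_\partial)$ by $\blangle\beta,\delta\varphi\brangle_\star+\blangle(\delta\alpha,\delta\alpha_\partial),\psi\brangle_\star$. Comparing term by term with $\Omega_{T^\star V}(X,Y)$ immediately yields $\beta=(-\dot\alpha,-\dot\alpha_\partial)$ and $\psi=\dot\varphi$, giving the stated expression for $\Omega_{T^\star V}^\flat$; the analogous matching against $\blangle\,\cdot\,,\,\cdot\,\brangle_\dagger$ settles the $\dagger$ case. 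Linearity in $(\dot\varphi,\dot\alpha,\dot\alpha_\partial)$ and invariance of the base point are then manifest, establishing the vector bundle morphism claim. I do not anticipate any real obstacle: the only conceptual point worth emphasizing is that the non-degeneracy of the restricted pairings, combined with the fact that each component of the image already lies in the appropriate restricted dual space, is precisely what forces the flat map to land inside the restricted cotangent bundle rather than merely in the full topological dual---a feature built into the definitions of $V^\star$ and $V^\dagger$ in \S\ref{Alternative_choice} and central to the whole Lagrange--Dirac construction with boundary energy flow.
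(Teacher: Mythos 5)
Your proposal is correct and follows essentially the same route as the paper, which proves the proposition implicitly by displaying the explicit expressions $\Omega_{T^\star V}(\varphi,\alpha,\alpha_\partial)\bigl((\dot\varphi,\dot\alpha,\dot\alpha_\partial),(\delta\varphi,\delta\alpha,\delta\alpha_\partial)\bigr)=\blangle(\delta\alpha,\delta\alpha_\partial),\dot\varphi\brangle_\star-\blangle(\dot\alpha,\dot\alpha_\partial),\delta\varphi\brangle_\star$ (and its $\dagger$ analogue) and then reading off the flat maps, exactly as you do. Your differentiation of $\Theta$ along constant vector fields and the term-by-term matching against the pairing of $T_{(\varphi,\alpha,\alpha_\partial)}(T^\star V)\simeq V\times V^\star$ with $V^\star\times V$ are the intended computation, so nothing is missing.
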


\begin{definition}\label{def:OmegaflatVectBundleValued_kforms}\rm
Associated to each choice of the restricted dual, there exists a structure of three isomorphisms between the restricted iterated bundles, the so-called \emph{Tulczyjew triple} on the space of $E$-valued $k$-forms; namely:
\begin{itemize}
\item[(i)] For $V^\star=\Lambda_m^k(M,E^*)\times\Lambda_{m-1}^k(\partial M,E^*)$, the corresponding Tulczyjew triple reads
\begin{equation*}
	\begin{tikzpicture}
			\matrix (m) [matrix of math nodes,row sep=0.1em,column sep=6em,minimum width=2em]
			{	T^\star(T\Omega^k(M,E)) & T(T^\star\Omega^k(M,E)) & T^\star(T^\star\Omega^k(M,E))\\
			\left(\varphi,\dot\varphi, \dot\alpha,\dot\alpha_\partial,\alpha,\alpha_\partial\right) & \left(\varphi,\alpha,\alpha_\partial, \dot\varphi,\dot\alpha,\dot\alpha_\partial\right) & \left(\varphi,\alpha,\alpha_\partial, -\dot\alpha,-\dot\alpha_\partial,\dot\varphi\right).\\};
			\path[-stealth]
			(m-1-1) edge [bend left = 25] node [above] {$\gamma_{T^\star V}=\Omega_{T^{\star} V}^\flat\circ\kappa_{T^{\star} V}^{-1}$} (m-1-3)
			(m-1-2) edge [] node [above] {$\kappa_{T^\star V}$} (m-1-1)
			(m-1-2) edge [] node [above] {$\Omega_{T^\star V}^\flat$} (m-1-3)
			(m-1-3)
			(m-2-2) edge [|->] node [] {} (m-2-3)
			(m-2-2) edge [|->] node [] {} (m-2-1);
	\end{tikzpicture}
\end{equation*}
\item[(ii)] For $V^\dagger=\Omega ^{m-k}(M,E^*) \times \Omega ^{m-k-1}( \partial M,E^*)$, the situation is similar.
\end{itemize}
\end{definition}

Thanks to this setting, we can now introduce the canonical Dirac structures in this infinite dimensional context by following the same approach as in the finite dimensional case.

\begin{definition}\rm\label{def:diracstructurebundle}
Associated to the choice of the restricted dual space, we define the \emph{canonical Dirac structure} as follows:
\begin{itemize}
\item[(i)] The canonical Dirac structure on $T^\star V= \Omega^k(M,E)\times\Lambda_m^k(M,E^*)\times\Lambda_{m-1}^k(\partial M,E^*)$ is the subbundle $D_{T^\star V}=\operatorname{graph}\Omega_{T^\star V}^\flat$. For each $(\varphi,\alpha,\alpha_\partial)\in T^\star V$, it reads
\begin{equation*}
\begin{split}
D_{T^\star V}(\varphi,\alpha,\alpha_\partial)&=\big\{(\dot\varphi,\dot\alpha,\dot\alpha_\partial, \delta\alpha,\delta\alpha_\partial,\delta\varphi)\in T_{(\varphi,\alpha,\alpha_\partial)}(T^\star V)\times T_{(\varphi,\alpha,\alpha_\partial)}^\star(T^\star V)\mid\\
&\hspace{5cm}-\dot\alpha=\delta\alpha,~-\dot\alpha_\partial=\delta\alpha_\partial,~\dot\varphi=\delta\varphi\big\}.
\end{split}
\end{equation*}
\item[(ii)] The canonical Dirac structure on $T^\dagger V = \Omega^k(M,E)\times\Omega^{m-k}(M,E^*)\times\Omega^{m-k-1}(\partial M,E^*)$ is the subbundle $D_{T^\dagger V}=\operatorname{graph}\Omega_{T^\dagger\Omega^k(M,E)}^\flat$. For each $(\varphi,\alpha,\alpha_\partial)\in T^\dagger V$, it reads
\begin{equation*}
\begin{split}
D_{T^\dagger V}(\varphi,\alpha,\alpha_\partial)&=\big\{(\dot\varphi,\dot\alpha,\dot\alpha_\partial,\,\delta\alpha,\delta\alpha_\partial,\delta\varphi)\in T_{(\varphi,\alpha,\alpha_\partial)}\big(T^\dagger V\big)\times T_{(\varphi,\alpha,\alpha_\partial)}^\dagger\big(T^\dagger V\big)\mid\\
&\hspace{5cm}-\dot\alpha=\delta\alpha,~-\dot\alpha_\partial=\delta\alpha_\partial,~\dot\varphi=\delta\varphi\big\}.
\end{split}
\end{equation*}
\end{itemize}
\end{definition}

\subsection{Divergence theorem for skew-symmetric vector bundle-valued tensor fields}\label{sec:divergenceVectValued_kforms}

Here we introduce some technical results that will be useful in the forthcoming sections to compute the partial derivatives of the Lagrangian and to derive the equations of motion and boundary conditions in intrinsic form.
In particular, given a linear connection $\nabla$ on $E$ and its dual connection $\nabla^*$ on $E^*$, we shall state a covariant divergence theorem for bundle-valued forms. We shall also prove that taking the covariant divergence associated to $\nabla^*$ when the dual space $V^\star$ is considered, corresponds to taking the covariant exterior derivative ${\rm d}^{\nabla^*}$ when the dual space $V^\dagger$ is chosen.

In order to define the covariant divergence, we shall need the \emph{trace} operator, which is the vector bundle morphism over the identity, ${\rm id}_M$, defined as:
\begin{equation*}
\tr:\textstyle\bigwedge^{k+1}TM\otimes\bigwedge^m T^*M\otimes E^*\to\bigwedge^k TM\otimes\bigwedge^{m-1}T^*M\otimes E^*,\quad U\otimes\mu\otimes\eta\mapsto\displaystyle\sum_{i=1}^{k+1}\hat U^i\otimes i_{U_i}\mu\otimes\eta,
\end{equation*}
where $U=U_1\wedge\dots\wedge U_{k+1}\in\bigwedge^{k+1} TM$, $\mu\in T^*M$ and $\hat U^r=U_1\wedge\dots\wedge U_{r-1}\wedge U_{r+1}\wedge\dots\wedge U_{k+1}\in\bigwedge^k TM$. In local coordinates, for $\chi\otimes\eta\in\Lambda ^{k+1}_m(M,E^*)$ with $ \chi =\chi^J\partial_J\otimes d^m x$, it reads
$$
\tr(\chi\otimes\eta)=\sum_{r=1}^{k+1}\chi^J\,\partial_{J_r}\otimes d_{(j_r)}^{m-1} x\otimes\eta,
$$
where we use the multi-index notation (recall Remark \ref{remark:multindex}). For the particular case $k=0$, we have $\chi=\chi^i\partial_i\otimes d^m x$ and the previous expression reduces to (cf. also \cite{GB2022}):
$$
\tr(\chi\otimes\eta)=\chi^i\,d_i^{m-1}x\otimes\eta=i_{\partial_i}\chi(dx^i,\dots)\otimes\eta.
$$

Given a linear connection $\nabla$ on $\pi_{E,M}:E\to M$, let $\nabla^*$ be its dual connection. The covariant exterior derivatives of $\nabla$ and $\nabla^*$ are denoted by
\begin{equation*}
\nd:\Omega^s(M,E)\to\Omega^{s+1}(M,E),\qquad{\rm d}^{\nabla^*}:\Omega^s(M,E^*)\to\Omega^{s+1}(M,E^*),
\end{equation*}
respectively, $0\leq s\leq m-1$ (see, e.g., \cite{marsh2018mathematics}). 
Note that $\nd^*$ may be trivially extended to $\Lambda_s^k(M,E^*)\to\Lambda_{s+1}^k(M,E^*)$; namely,
\begin{equation*}
\nd^*(U\otimes\alpha)=U\otimes\nd^*\alpha,\qquad U\in\Gamma(\textstyle\bigwedge^k TM),~\alpha\in\Omega^s(M,E^*).
\end{equation*}

The \emph{divergence} for $E^*$-valued tensor fields is defined as
\begin{equation*}
\operatorname{div}^{\nabla^*}:\Lambda_m^{k+1}(M,E^*)\mapsto\Lambda_m^k(M,E^*),\quad\chi\mapsto\nd^*(\tr\chi). 
\end{equation*}
Explicitly, it is given by
\begin{equation}\label{def_div_E_star}
{\rm div}^{\nabla^*}(\xi)=\sum_{i=1}^{k+1}\hat U^i\otimes\nd^*(i_{U_i}\mu\otimes\eta),\qquad \xi=U\otimes\mu\otimes\eta\in\Lambda_m^{k+1}(M,E^*),
\end{equation}
where we write $U=U_1\wedge\hdots\wedge U_{k+1}$.

Note that, for standard tensor fields, the divergence is defined canonically, i.e., without using a linear connection:
\begin{equation*}
\operatorname{div}:\Lambda_m^{k+1}(M)\to\Lambda_m^k(M),\quad\chi\mapsto{\rm d}(\tr\chi).
\end{equation*}

\begin{remark}[Contractions in local coordinates]\rm\label{rem:contraction}
Let $(x^i)$, $1\leq i\leq m$, be local coordinates on $M$, as in Remark \ref{remark:multindex}, and write $\alpha=\alpha^I\,\partial_I\otimes d^m x\in\Lambda_m^k(M)$ and $\varphi=\varphi_I\,dx^I\in\Omega^k(M)$. Then the contraction locally reads
\begin{equation*}
\alpha\cdot\varphi=\alpha^I\,\varphi_I\,(\partial_I\cdot dx^I)\otimes d^m x=\alpha^I\,\varphi_I\,d^m x.
\end{equation*}
In addition, for $\chi=\chi^J\,\partial_J\otimes d^m x\in\Lambda_m^{k+1}(M)$, we obtain
\begin{equation*}
\chi\cdot\varphi=\chi^J\,\varphi_I\,(\partial_J\cdot dx^I)\otimes d^m x
=(-1)^k\sum_{r=1}^{k+1}\chi^J\varphi_{J_r}\,\partial_{j_r}\otimes d^m x,
\end{equation*}
where we have used that $\partial_J\cdot dx^I=(-1)^k\,\partial_{j_r}$ when $I=J_r$ for some $1\leq r\leq k+1$ and vanishes otherwise.
\end{remark}

\begin{proposition}[Covariant divergence theorem]\label{prop:covariantdivergence}
Consider  $\delta\varphi\in\Omega^k(M,E)$ and $\chi\in\Lambda_m^{k+1}(M,E^*)$. Then
\begin{equation*}
\operatorname{div}(\chi\bcdot\delta\varphi)=\chi\bcdot\nd\delta\varphi+(-1)^k\left(\operatorname{div}^{ \nabla ^*}\chi\right)\bcdot\delta\varphi.
\end{equation*}
In particular, by Stokes' theorem we have
\begin{equation*}
\int_M\chi\bcdot\nd\delta\varphi=-(-1)^k\int_M\left(\operatorname{div}^{ \nabla ^*}\chi\right)\bcdot\delta\varphi+\int_{\partial M}\iota_{\partial M}^*(\tr\chi)\bcdot\iota_{\partial M}^*\delta\varphi.
\end{equation*}
\end{proposition}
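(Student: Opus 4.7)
The plan is to reduce the covariant identity to its non-covariant counterpart by exploiting the compatibility between $\nabla$ and $\nabla^*$. Since the divergence, the covariant exterior derivative, and the contraction $\bcdot$ are all local and $C^\infty(M)$-bilinear, and every element of $\Lambda_m^{k+1}(M,E^*)$ (respectively $\Omega^k(M,E)$) is locally a finite sum of decomposable tensors, it suffices to verify the identity pointwise on
\[
\chi = \tilde\chi\otimes\eta,\qquad \delta\varphi = \tilde\varphi\otimes\xi,
\]
with $\tilde\chi\in\Lambda_m^{k+1}(M)$, $\tilde\varphi\in\Omega^k(M)$, $\eta\in\Gamma(E^*)$ and $\xi\in\Gamma(E)$. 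For such tensors the contraction factors as a scalar multiple, namely $\chi\bcdot\delta\varphi=(\eta\cdot\xi)\,(\tilde\chi\cdot\tilde\varphi)\in\Lambda_m^1(M)$.

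The computation then proceeds in three steps. First, I apply the ordinary product rule for $d$ to
\[
\operatorname{div}(\chi\bcdot\delta\varphi) = d\bigl((\eta\cdot\xi)\,\tr(\tilde\chi\cdot\tilde\varphi)\bigr) = d(\eta\cdot\xi)\wedge\tr(\tilde\chi\cdot\tilde\varphi) + (\eta\cdot\xi)\,\operatorname{div}(\tilde\chi\cdot\tilde\varphi).
\]
Second, I expand the last term with the classical (non-covariant) divergence identity
\[
\operatorname{div}(\tilde\chi\cdot\tilde\varphi) = \tilde\chi\cdot d\tilde\varphi + (-1)^k(\operatorname{div}\tilde\chi)\cdot\tilde\varphi,
\]
which follows from $\operatorname{div}=d\circ\tr$ together with the Leibniz rules for $d$ and the interior product, and is readily checked in local coordinates by means of Remark \ref{rem:contraction}. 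Third, I substitute the defining compatibility of the dual connection, $d(\eta\cdot\xi)=(\nabla^*\eta)\cdot\xi+\eta\cdot(\nabla\xi)$, and use the standard Leibniz rules
\[
\nd(\tilde\varphi\otimes\xi) = d\tilde\varphi\otimes\xi + (-1)^k\tilde\varphi\wedge\nabla\xi,\qquad {\rm d}^{\nabla^*}(\omega\otimes\eta) = d\omega\otimes\eta + (-1)^{\deg\omega}\,\omega\wedge\nabla^*\eta,
\]
so as to regroup the four resulting terms into $\chi\bcdot\nd\delta\varphi$ on the one hand, and $(-1)^k(\operatorname{div}^{\nabla^*}\chi)\bcdot\delta\varphi$ on the other.

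I expect the sign bookkeeping to be the main technical obstacle: one must track the $(-1)^k$ in the real-valued divergence identity, the $(-1)^k$ in the Leibniz rule for $\nd$, the $(-1)^{m-1}$ in the Leibniz rule for ${\rm d}^{\nabla^*}$ applied to $\tr\chi\in\Lambda_{m-1}^k(M,E^*)$, and the $(-1)^k$ in the local contraction formula of Remark \ref{rem:contraction}. A careful local computation, in coordinates adapted to $M$ and in a local frame of $E$, confirms that these combine to yield precisely the factor $(-1)^k$ of the statement. Finally, the Stokes' theorem consequence is immediate: integrating the pointwise identity over $M$ and applying Stokes' theorem to the left-hand side via $\operatorname{div}=d\circ\tr$, together with the natural identifications $\tr(\chi\bcdot\delta\varphi)=(\tr\chi)\bcdot\delta\varphi$ and $\iota_{\partial M}^*(\alpha\bcdot\beta)=(\iota_{\partial M}^*\alpha)\bcdot(\iota_{\partial M}^*\beta)$, produces the stated boundary integral.
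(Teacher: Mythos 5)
Your proposal is correct in substance and would yield a valid proof, but it is organized quite differently from the paper's. The paper proves the identity by one monolithic computation in local coordinates and a local frame of $E$: it writes out $\nd\delta\varphi$, $\operatorname{div}^{\nabla^*}\chi$, and the contractions with explicit Christoffel symbols $\Gamma_{i,b}^a$ (those of $\nabla^*$ being $-\Gamma_{i,a}^b$), observes that all Christoffel terms cancel in the combination $(-1)^k\chi\bcdot\nd\delta\varphi+\left(\operatorname{div}^{\nabla^*}\chi\right)\bcdot\delta\varphi$, and recognizes the remainder as $(-1)^k\operatorname{div}(\chi\bcdot\delta\varphi)$ by the product rule; the Stokes part then rests on $\tr(\chi\bcdot\delta\varphi)=(\tr\chi)\bcdot\delta\varphi$, also verified in coordinates. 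You instead reduce to decomposables and isolate the two conceptually distinct inputs: the connection-free identity for real-valued tensors, and the defining compatibility $d(\eta\cdot\xi)=(\nabla^*\eta)\cdot\xi+\eta\cdot(\nabla\xi)$ of the dual connection. This is genuinely more structural: it makes visible that the covariant statement is forced by the scalar one plus duality of connections, a mechanism that in the paper's computation appears only as the silent cancellation of Christoffel terms. What your route buys in insight it pays for in the regrouping step: matching $(\eta\cdot\nabla\xi)\wedge\tr(\tilde\chi\cdot\tilde\varphi)$ with the $(-1)^k\,\tilde\varphi\wedge\nabla\xi$ term of the Leibniz rule for $\nd$, and $\left((\nabla^*\eta)\cdot\xi\right)\wedge\tr(\tilde\chi\cdot\tilde\varphi)$ with the $(-1)^{m-1}$ term of ${\rm d}^{\nabla^*}(\tr\chi)$, requires two wedge--trace--contraction commutation identities of the type $\theta\wedge\tr(\tilde\chi\cdot\tilde\varphi)=\pm\,\tilde\chi\cdot(\tilde\varphi\wedge\theta)$ for a one-form $\theta$, together with its analogue on the $E^*$ side. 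These are precisely where every sign of the statement lives, they are sensitive to the contraction convention of Remark \ref{rem:contraction}, and your proposal defers them to an unperformed ``careful local computation'' --- which, once actually written out, is essentially the frame computation the paper performs. So the deferral is acceptable as a plan, but it contains the entire technical content of the proof; as submitted, your argument is a correct skeleton rather than a complete verification.

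Two small points to fix. First, $\operatorname{div}$ and $\nd$ are not $C^\infty(M)$-bilinear as you claim --- they are first-order differential operators, hence only $\mathbb{R}$-linear and local; this does not endanger the reduction to decomposables, since locally $\chi=\sum_a\tilde\chi_a\otimes B^a$ in a frame and $\mathbb{R}$-bilinearity plus locality suffice, but the justification should be stated that way. Second, when you apply the graded Leibniz rule to $\tr\chi$, note that the paper's extension of ${\rm d}^{\nabla^*}$ to $\Lambda_{m-1}^k(M,E^*)$ acts trivially on the multivector leg, so the relevant degree is that of the $(m-1)$-form leg only; your sign $(-1)^{m-1}$ is then the right one, but this convention should be invoked explicitly.
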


\begin{proof}
Let $(x^i)$, $1\leq i\leq m$, be local coordinates on $M$, as in Remark \ref{remark:multindex}, $\{B_a\mid1\leq a\leq n\}$ be a basis of local sections of $\pi_{E,M}:E\to M$, and $\{B^a\mid1\leq a\leq n\}$ be the dual basis. Hence, we may write
\begin{equation*}
\delta\varphi=(\delta\varphi)_I^a\,dx^I\otimes B_a,\qquad\chi=\chi_a^J\,\partial_J\otimes d^m x\otimes B^a,
\end{equation*}
for some local functions $(\delta\varphi)_I^a,\chi_a^J \in C^\infty(M)$, where $|I|=k$, $|J|=k+1$ and $1\leq a\leq n$. In addition, the covariant exterior derivative is locally given by
\begin{equation*}
\nd\delta\varphi=\left(\partial_i(\delta\varphi)_I^a+\Gamma_{i,b}^a\,(\delta\varphi)_I^b\right)dx^i\wedge dx^I\otimes B_a,
\end{equation*}
where $\Gamma_{i,b}^a\in C^\infty(M)$, $1\leq i\leq m$, $1\leq a,b\leq n$, are the Christoffel symbols of $\nabla$. Analogously, we have
\begin{equation*}
\operatorname{div}^{ \nabla ^*}\chi={\rm d}^{\nabla^*}\left(\sum_{r=1}^{k+1}\chi_a^J \,\partial_{J_r}\otimes d_{(j_r)}^{m-1} x\otimes B^a\right)=\sum_{r=1}^{k+1}\left(\partial_{j_r}\chi_a^J - \Gamma_{j_r,a}^b\,\chi_b^J\right)\partial_{J_r}\otimes d^m x\otimes B^a,
\end{equation*}
where we have used that the Christoffel symbols of $\nabla^*$ are $-\Gamma_{i,a}^b$, $1\leq i\leq m$, $1\leq a,b\leq m$. From this and Remark \ref{rem:contraction}, we obtain:
\begin{align*}
\chi\bcdot\nd\delta\varphi & =(-1)^k\sum_{r=1}^{k+1}\chi_a^J\left(\partial_{j_r}(\delta\varphi)_{J_r}^a+\Gamma_{j_r,b}^a\,(\delta\varphi)_{J_r}^b\right)d^m x,\\
\left(\operatorname{div}^{ \nabla ^*}\chi\right)\bcdot\delta\varphi & =\sum_{r=1}^{k+1}\left(\partial_{j_r}\chi_a^J-\Gamma_{j_r,a}^b\,\chi_b^J\right)(\delta\varphi)_{J_r}^a\,d^m x.
\end{align*}
Moreover, we have
\begin{equation*}
\chi\bcdot\delta\varphi=(-1)^k\sum_{r=1}^{k+1}\chi_a^J (\delta\varphi)_{J_r}^a\,\partial_{j_r}\otimes d^m x,\qquad\tr(\chi\bcdot\delta\varphi)=\sum_{r=1}^{k+1}\chi_a^J (\delta\varphi)_{J_r}^a\,d_{(j_r)}^{m-1} x.
\end{equation*}
The result is now a straightforward computation by using the previous expressions:
\begin{align*}
(-1)^k\chi\bcdot\nd\delta\varphi+\left(\operatorname{div}^{ \nabla ^*}\chi\right)\bcdot\delta\varphi & = \sum_{r=1}^{k+1}\left(\chi_a^J\partial_{j_r}(\delta\varphi)_{J_r}^a+\left(\partial_{j_r}\chi_a^J\right)(\delta\varphi)_{J_r}^a\right)d^m x\\
& = \sum_{r=1}^{k+1}\partial_{j_r}\left(\chi_a^J (\delta\varphi)_{J_r}^a\right)d^m x\\
& =(-1)^k\operatorname{div}\left(\chi\bcdot\delta\varphi\right).
\end{align*}
The second part is a straightforward consequence of  Stokes' theorem by noting that
\begin{equation*}
\tr (\chi\bcdot\delta\varphi)=\sum_{r=1}^{k+1}\chi_a^J(\delta\varphi)_{J_r}^a\,d_{(j_r)}^{m-1} x=(\tr \chi)\bcdot\delta\varphi,
\end{equation*}
and $\iota_{\partial M}^*((\tr \chi)\bcdot\delta\varphi)=\iota_{\partial M}^*(\tr \chi)\bcdot\iota_{\partial M}^*\delta\varphi$.
\end{proof}

\medskip

Thanks to the map $\Phi_E$ in \eqref{eq:PhiE} and the following lemma, we will be able to relate the partial derivatives of the Lagrangian when the two choices of the restricted dual are considered (see section \ref{sec:partialderivativesvectorkforms} below).

\begin{lemma}\label{lemma:PhiEcontractions}\rm
Consider $\delta\varphi\in\Omega^k(M,E)$, $\zeta\in\Lambda_m^k(M,E^*)$ and $\chi\in\Lambda_m^{k+1}(M,E^*)$. Then
\begin{align*}
& \zeta\bcdot\delta\varphi=\delta\varphi\bwedge\Phi_E(\zeta),\qquad\qquad\left(\operatorname{div}^{ \nabla ^*}\chi\right)\bcdot\delta\varphi= \delta\varphi\bwedge{\rm d}^{\nabla^*}\Phi_E(\chi),\\
& (\tr\chi)\bcdot\delta\varphi=\delta\varphi\bwedge\Phi_E(\chi).
\end{align*}
\end{lemma}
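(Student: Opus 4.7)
The plan is to verify the three identities pointwise, by writing $\zeta = U\otimes\mu\otimes\eta$, $\chi = V\otimes\mu\otimes\eta$, and $\delta\varphi = \tilde\varphi\otimes\xi$ for pure tensors and then extending by bilinearity. In every case the scalar factor $(\eta\cdot\xi)$ appears identically on both sides, so the task reduces to scalar-valued statements in the exterior algebra of $T_x^*M$ (with an $E^*$-valued tail that comes along for the ride).

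For (1), with $|U|=|\tilde\varphi|=k$, both sides collapse to the classical identity $\tilde\varphi\wedge i_U\mu = (U\cdot\tilde\varphi)\,\mu$, valid for any top form $\mu$ on an $m$-dimensional space. This is immediate from the definitions; it can be checked on a basis $\tilde\varphi=dx^I$, $U=\partial_J$, $\mu=d^m x$, where both sides vanish unless $I=J$.

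For (3), the LHS expands via the definition of $\tr$ as $\sum_{r=1}^{k+1}(\hat V^r\cdot\tilde\varphi)\,i_{V_r}\mu\cdot(\eta\cdot\xi)$, while the RHS equals $(\tilde\varphi\wedge i_V\mu)(\eta\cdot\xi)$ after applying $\Phi_E$. The lemma thus reduces to the purely algebraic identity $\tilde\varphi\wedge i_V\mu = \sum_r (\hat V^r\cdot\tilde\varphi)\,i_{V_r}\mu$, which I would prove by induction on $|V|$ using the Leibniz rule $i_v(\alpha\wedge\beta)=(i_v\alpha)\wedge\beta+(-1)^{|\alpha|}\alpha\wedge i_v\beta$, or verify directly on a basis: both sides vanish unless $\tilde\varphi=dx^{J_r}$ for some unique $r$, and then both sides reduce to the same $(m-1)$-form.

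For (2), I would deduce it from (1) by taking $\zeta:=\operatorname{div}^{\nabla^*}\chi$, which lies in $\Lambda_m^k(M,E^*)$ by construction. Identity (1) then gives $(\operatorname{div}^{\nabla^*}\chi)\bcdot\delta\varphi = \delta\varphi\bwedge\Phi_E(\operatorname{div}^{\nabla^*}\chi)$, and the claim is reduced to the intertwining relation $\Phi_E\circ\operatorname{div}^{\nabla^*} = {\rm d}^{\nabla^*}\circ\Phi_E$ in $\Omega^{m-k}(M,E^*)$. This is a local coordinate check using the explicit formulas derived in the proof of Proposition \ref{prop:covariantdivergence}: the Christoffel contributions match identically (both sides use $\nabla^*$), and the combinatorial part reduces to comparing $\sum_r i_{\partial_{J_r}}d^m x$ (coming out of $\tr$) with $\sum_i dx^i\wedge i_{\partial_J}d^m x$, which only sees $i=j_r$.

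The main obstacle is sign bookkeeping. The signs in $i_{\partial_J}d^m x$, in the ordering of $\hat V^r$, and in wedges such as $dx^i\wedge d_J^{m-k-1}x$ all depend sensitively on which position is removed from or inserted into $J$. Every identity holds up to such signs, but a careless expansion produces spurious $(-1)^r$ factors. The cleanest route is to commit from the outset to the paper's convention (insertion at the front in $i_U\mu$) and to perform one careful verification on a pure tensor of each kind, invoking bilinearity afterward, rather than enumerating index configurations.
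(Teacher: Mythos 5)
Your proposal is correct, and for the first and third identities it is essentially the paper's argument in lightly different clothing: the paper proves all three identities by one direct computation in local coordinates (multi-index frames $dx^I$, $\partial_J\otimes d^mx$, a local frame $\{B_a\}$ of $E$, and the contraction rules of Remark \ref{rem:contraction}), which is the same pointwise, pure-tensor verification you describe, just organized through coordinates rather than through the abstract identities $\tilde\varphi\wedge i_U\mu=(U\cdot\tilde\varphi)\,\mu$ and its trace analogue. Where you genuinely depart from the paper is the second identity: the paper expands $\delta\varphi\bwedge{\rm d}^{\nabla^*}\Phi_E(\chi)$ from scratch, whereas you deduce it from identity (1) applied to $\zeta=\operatorname{div}^{\nabla^*}\chi\in\Lambda_m^k(M,E^*)$ together with the intertwining relation $\Phi_E\circ\operatorname{div}^{\nabla^*}={\rm d}^{\nabla^*}\circ\Phi_E$. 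This is legitimate (no circularity, since (1) is established independently) and it buys something the paper leaves implicit: the paper's own displayed formulas $\operatorname{div}^{\nabla^*}\chi=\sum_r\big(\partial_{j_r}\chi_a^J-\Gamma_{j_r,a}^b\chi_b^J\big)\partial_{J_r}\otimes d^mx\otimes B^a$ and ${\rm d}^{\nabla^*}\Phi_E(\chi)=\sum_r\big(\partial_{j_r}\chi_a^J-\Gamma_{j_r,a}^b\chi_b^J\big)d_{J_r}^{m-k}x\otimes B^a$ have identical coefficients, so your intertwining relation is exactly what those two displays jointly assert, now promoted to a structural statement: $\Phi_E$ conjugates the covariant divergence into the covariant exterior derivative. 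Your observation that the Christoffel terms pass through automatically is also right, since $\Phi_E$ and $\tr$ act as the identity on the $E^*$ factor.

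One caution, which you partly anticipate: the algebraic identity you propose for (3), $\tilde\varphi\wedge i_V\mu=\sum_r(\hat V^r\cdot\tilde\varphi)\,i_{V_r}\mu$, is \emph{not} sign-free under a naive slot-by-slot insertion convention; the summands individually carry $r$-dependent signs (hidden in $i_{V_r}\mu$ and in the ordering of $\hat V^r$) that only cancel against the signs built into the paper's contraction conventions of Remark \ref{rem:contraction} (note in particular the uniform $(-1)^k$ there, which no single front- or back-insertion convention reproduces slotwise). So your closing resolution --- fix the paper's conventions once and for all, verify each identity on one pure tensor, and extend by bilinearity --- is not merely prudent but necessary: carried out with any other convention, the basis check you sketch for (3) produces a sign discrepancy in individual terms, and the same applies to the combinatorial comparison of $\sum_r i_{\partial_{J_r}}d^mx$ with $dx^{j_r}\wedge i_{\partial_J}d^mx$ in your step (2). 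With that discipline imposed, your plan goes through and reproduces the lemma.
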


\begin{proof}
By working locally, as in Remark \ref{remark:multindex}, and using a basis $\{B_a\mid 1\leq a\leq n\}$ of local sections of $\pi_{E,M}:E\to M$, we may write:
\begin{equation*}
\delta\varphi=(\delta\varphi)_I^a\,dx^I\otimes B_a,\qquad\zeta=\zeta_a^I\,\partial_I\otimes d^m x\otimes B^a,\qquad\chi=\chi_a^J\,\partial_J\otimes d^m x\otimes B^a,
\end{equation*}
for some local functions $(\delta\varphi)_I^a,\zeta_a^I,\chi_a^J\in C^\infty(M)$, where $|I|=k$, $|J|=k+1$, and $\{B^a\mid 1\leq a\leq n\}$ is the corresponding dual basis. Hence,
\begin{eqnarray*}
\operatorname{div}^{\nabla^*}\chi=\sum_{r=1}^{k+1}\left(\partial_{j_r}\chi_a^J-\Gamma_{j_r,a}^b\,\chi_b^J\right)\partial_{J_r}\otimes d^m x\otimes B^a,\qquad & \Phi_E(\chi)=\chi_a^J\,d_J^{m-k-1}x\otimes B^a,\\
{\rm d}^{\nabla^*}\Phi_E(\chi)
=\sum_{r=1}^{k+1}\left(\partial_{j_r}\chi_a^J-\Gamma_{j_r,a}^b\,\chi_b^J\right)\,d_{J_r}^{m-k}x\otimes B^a,\qquad & \displaystyle\tr\chi=\sum_{r=1}^{k+1}\chi_a^J\,\partial_{J_r}\otimes d_{(j_r)}^{m-1}x\otimes B^a,
\end{eqnarray*}
where $\Gamma_{i,b}^a\in C^\infty(M)$, $1\leq i\leq m$, $1\leq a,b\leq n$, are the Christoffel symbols of $\nabla$. The result is now a straightforward computation with the aid of Remark \ref{rem:contraction}:
\begin{align*}
\delta\varphi\bwedge\Phi_E(\zeta) & =(\delta\varphi)_{I'}^a\,\zeta_a^I\,dx^{I'}\wedge d_I^{m-k}x=(\delta\varphi)_I^a\,\zeta_a^I\,d^m x=\zeta\bcdot\delta\varphi,\\
\delta\varphi\bwedge{\rm d}^{\nabla^*}\Phi_E(\chi) & =\sum_{j=1}^{k+1}\left(\partial_{j_r}\chi_a^J-\Gamma_{j_r,a}^b\,\chi_b^J\right)(\delta\varphi)_I^a\,dx^I\wedge d_{J_r}^{m-k}x\\
& =\sum_{r=1}^{k+1}\left(\partial_{j_r}\chi_a^J-\Gamma_{j_r,a}^b\,\chi_b^J\right)(\delta\varphi)_{J_r}^a\,d^mx=\left(\operatorname{div}^{\nabla^*}\chi\right)\bcdot\delta\varphi,\\
\delta\varphi\bwedge\Phi_E(\chi) & =\chi_a^J\,(\delta\varphi)_I^a\,dx^I\wedge d_J^{m-k-1}x\\
& =\sum_{r=1}^{k+1}\chi_a^J\,(\delta\varphi)_{J_r}^a\,d_{(j_r)}^{m-1}x=(\tr \chi)\bcdot\delta\varphi.
\end{align*}
\end{proof}

\subsection{Lagrangians and partial derivatives}\label{sec:partialderivativesvectorkforms}

Given a linear connection $ \nabla $ on $ \pi _{E,M}: E \rightarrow M$, we shall consider the class of Lagrangian functions defined in terms of a Lagrangian density $\mathscr{L}$ as
\begin{equation}\label{eq:lagrangianvectorbundlekforms}
L_\nabla:TV\to\mathbb R,\quad L _ \nabla (\varphi,\nu )= \int_M\mathscr L\left(\varphi, \nu ,\nd\varphi\right),
\end{equation}
with $\mathscr{L}$ being a vector bundle map
\begin{equation}\label{Lagrangian_density_bundle}
\textstyle\mathscr \!\!\!\mathscr{L}:W^k(M,E):=\left(\bigwedge^k T^*M\otimes E\right)\times_M\left(\bigwedge^k T^*M\otimes E\right)\times_M\left(\bigwedge^{k+1}T^*M\otimes E\right)\to\bigwedge^m T^*M.
\end{equation}
Note that we used the shorthand $W^k(M,E)\rightarrow M$ to denote the starting vector bundle. Note also that $L_\nabla$ depends parametrically on the chosen linear connection since the Lagrangian density is evaluated on the covariant exterior derivative $ {\rm d} ^ \nabla \varphi $.

\medskip

Below, we first define the fiber derivatives of the Lagrangian density $\mathscr{L}$. We then show how the partial functional derivative of $L_\nabla$ can be expressed in terms of these derivatives, and we find that it belongs to the restricted dual. This ensures that a Lagrange--Dirac dynamical system can be constructed for such Lagrangians.

\medskip

The \textit{fiber derivatives} of $\mathscr L$,
\begin{align*}
& \frac{\partial\mathscr L}{\partial\varphi}:W^k(M,E)\to\textstyle\bigwedge^k TM\otimes\bigwedge^m T^*M\otimes E^*,\\
& \frac{\partial\mathscr L}{\partial\nu }:W^k(M,E)\rightarrow\textstyle\bigwedge^k TM\otimes\bigwedge^m T^* M\otimes E^*,\\
& \frac{\partial\mathscr L}{\partial\zeta}:W^k(M,E)\to\textstyle\bigwedge^{k+1}TM\otimes\bigwedge^m T^* M\otimes E^*,
\end{align*}
are defined in the standard way; namely, for each $x\in M$ and $(\varphi_x,\nu _x,\zeta_x)\in W^k(M,E)_x$, they are given by:
\begin{align*}
\frac{\partial\mathscr L}{\partial\varphi}(\varphi_x, \nu _x,\zeta_x)\bcdot\delta\varphi_x & =\left.\frac{d}{d\epsilon}\right|_{ \epsilon=0}\mathscr L(\varphi_x+\epsilon \delta\varphi_x, \nu _x,\zeta_x),\qquad\delta\varphi_x\in\textstyle\bigwedge^k T_x^*M\otimes E_x,\\
\frac{\partial\mathscr L}{\partial\nu}(\varphi_x, \nu _x,\zeta_x)\bcdot\delta\nu_x & =\left.\frac{d}{d\epsilon}\right|_{ \epsilon=0}\mathscr L(\varphi_x, \nu _x+\epsilon \delta\nu_x,\zeta_x),\qquad\delta\nu_x\in\textstyle\bigwedge^k T_x^*M\otimes E_x,\\
\frac{\partial\mathscr L}{\partial\zeta}(\varphi_x, \nu _x,\zeta_x)\bcdot\delta\zeta_x & =\left.\frac{d}{d\epsilon}\right|_{ \epsilon=0}\mathscr L(\varphi_x, \nu _x,\zeta_x+\epsilon \delta\zeta_x),\qquad\delta\zeta_x\in\textstyle\bigwedge^{k+1} T_x^*M\otimes E_x.
\end{align*}

When working with $V^\dagger$ as the restricted dual, we will need a slightly different realisation of the fiber derivatives of $\mathscr L$, which we denote by
\begin{align*}
& \frac{\partialnew\mathscr L}{\partialnew\varphi}:W^k(M,E)\to\textstyle\bigwedge^{m-k} T^* M\otimes E^*,\vspace{0.1cm}\\
& \frac{\partialnew\mathscr L}{\partialnew\nu }:W^k(M,E)\to\textstyle\bigwedge^{m-k} T^* M\otimes E^*,\vspace{0.1cm}\\
&\frac{\partialnew\mathscr L}{\partialnew\zeta}:W^k(M,E)\to\textstyle\bigwedge^{m-k-1} T^* M\otimes E^*.
\end{align*}
More specifically, they are defined as:
\begin{align*}
\delta\varphi_x\bwedge\frac{\partialnew\mathscr L}{\partialnew\varphi}(\varphi_x,\nu _x,\zeta_x)=\left.\frac{d}{d\epsilon}\right|_{{\epsilon}=0}\mathscr L(\varphi_x+{\epsilon}\,\delta\varphi_x,\nu _x,\zeta_x),\qquad & \delta\varphi_x\in\textstyle\bigwedge^k T_x^*M\otimes E_x,\\
\delta{\nu }_x\bwedge\frac{\partialnew\mathscr L}{\partialnew\nu }(\varphi_x,\nu _x,\zeta_x)=\left.\frac{d}{d\epsilon}\right|_{\epsilon=0}\mathscr L(\varphi_x,\nu _x+{\epsilon}\,\delta\nu _x,\zeta_x),\qquad & \delta\nu _x\in\textstyle\bigwedge^k T_x^*M\otimes E_x,\\
\delta{\zeta}_x\bwedge\frac{\partialnew\mathscr L}{\partialnew\zeta}(\varphi_x,\nu _x,\zeta_x)=\left.\frac{d}{d\epsilon}\right|_{{\epsilon}=0}\mathscr L(\varphi_x,\nu _x,\zeta_x+{\epsilon}\,\delta\zeta_x),\qquad & \delta\zeta_x\in\textstyle\bigwedge^{k+1} T_x^*M\otimes E_x.
\end{align*}
Lemma \ref{lemma:PhiEcontractions} gives the following relations between these fiber derivatives through the map $\Phi_E$ defined in \eqref{eq:PhiE}:
\begin{align*}
\frac{\partialnew\mathscr L}{\partialnew\varphi}=\Phi_E\circ\frac{\partial\mathscr L}{\partial\varphi},\quad\frac{\partialnew\mathscr L}{\partialnew\nu }=\Phi_E\circ\frac{\partial\mathscr L}{\partial\nu },\quad\frac{\partialnew\mathscr L}{\partialnew\zeta}=\Phi_E\circ\frac{\partial\mathscr L}{\partial\zeta}.
\end{align*}

\medskip

The \textit{partial functional derivatives} of a general function $L:TV \rightarrow\mathbb{R}$ are the maps
\begin{equation*}
\frac{\delta L}{\delta\varphi},~\frac{\delta L}{\delta\nu }:TV=T\Omega^k(M,E)\to V'
\end{equation*}
defined as
\begin{equation*}
\frac{\delta L}{\delta\varphi}(\varphi,\nu)(\delta\varphi)=\left.\frac{d}{d \epsilon}\right|_{\epsilon=0}L(\varphi+\epsilon\,\delta\varphi,\nu),\qquad\frac{\delta L}{\delta\nu }(\varphi,\nu)(\delta\nu)=\left.\frac{d}{d\epsilon}\right|_{\epsilon=0}L(\varphi,\nu+\epsilon\,\delta\nu),
\end{equation*}
for each $(\varphi,\nu)\in TV$ and $\delta\varphi, \delta\nu\in V$.
Based on this, the \textit{differential} of $L:TV \rightarrow \mathbb{R}$ is the map
\begin{equation*}
{\rm d}L :TV\to T'(TV),\quad(\varphi, \nu )\mapsto\left(\varphi, \nu ,\frac{\delta L }{\delta\varphi}(\varphi,\nu ),\frac{\delta L }{\delta\nu}(\varphi, \nu )\right).
\end{equation*}

The next result ensures that  the partial functional derivatives of a Lagrangian defined through a density as in  \eqref{eq:lagrangianvectorbundlekforms} and \eqref{Lagrangian_density_bundle} may be regarded as elements of either $V^\star\subset V'$ or $V^\dagger\subset V'$ by means of the corresponding identifications. As a consequence, its differential is taking values in either $T^\star(TV)$ or in $T^\dagger(TV)$.

\begin{lemma}\rm\label{lemma:partialderivativevectorbundlekforms}
Let $\nabla$ be a linear connection on $\pi_{E,M}:E\to M$ and $L_\nabla:TV\to\mathbb R$ be a Lagrangian defined in terms of a density $\mathscr{L}$, as in \eqref{eq:lagrangianvectorbundlekforms}, where we set $\zeta={\rm d}^{\nabla}\varphi$. Then, for each $(\varphi,\nu )\in TV$, the partial functional derivatives of $L_\nabla$ are given as follows:
\begin{enumerate}
    \item For $V^\star$ under the identification given by $\blangle\cdot,\cdot\brangle_\star$, they read
    \begin{align*}
    \frac{\delta L_\nabla}{\delta\varphi}(\varphi, \nu ) & =\left(\frac{\partial\mathscr L}{\partial\varphi}(\varphi,\nu ,\nd\varphi)-(-1)^{k}\operatorname{div}^{ \nabla ^*} \left( \frac{\partial\mathscr L}{\partial\zeta}(\varphi,\nu ,\nd\varphi) \right) ,~\iota_{\partial M}^*\left(\tr\frac{\partial\mathscr L}{\partial\zeta}(\varphi, \nu ,\nd\varphi)\right)\right),\\
    \frac{\delta L_\nabla}{\delta\nu }(\varphi, \nu ) & =\left(\frac{\partial\mathscr L}{\partial \nu }(\varphi,\nu ,\nd\varphi),~0\right).
    \end{align*}
    \item For $V^\dagger$ under the identification given by $\blangle\cdot,\cdot\brangle_\dagger$, they read
    \begin{align*}
    \frac{\delta L_\nabla}{\delta\varphi}(\varphi, \nu ) & =\left(\frac{\partialnew\mathscr L}{\partialnew\varphi}(\varphi,\nu ,\nd\varphi)-(-1)^k{\rm d}^{ \nabla^*} \left( \frac{\partialnew\mathscr L}{\partialnew\zeta}(\varphi, \nu ,\nd\varphi) \right) ,~\iota_{\partial M}^*\left(\frac{\partialnew\mathscr L}{\partialnew\zeta}(\varphi,\nu ,\nd\varphi)\right)\right),\\
    \frac{\delta L_\nabla}{\delta\nu }(\varphi, \nu )& =\left(\frac{\partialnew\mathscr L}{\partialnew \nu }(\varphi,\nu ,\nd\varphi),~0\right).
    \end{align*}
\end{enumerate} 
\end{lemma}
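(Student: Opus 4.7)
The plan is to compute the variation $\tfrac{d}{d\epsilon}\big|_{\epsilon=0} L_\nabla(\varphi+\epsilon\,\delta\varphi,\nu)$ by passing the derivative under the integral, applying the chain rule to $\mathscr L$, and then integrating by parts via the covariant divergence theorem of Proposition \ref{prop:covariantdivergence} to display the result as a pairing on the restricted dual. The $\nu$ component will be essentially immediate, since $\mathscr L$ depends on $\nu$ only pointwise with no derivative, so no boundary term arises and $\frac{\delta L_\nabla}{\delta\nu}(\varphi,\nu)$ is represented by $\bigl(\tfrac{\partial\mathscr L}{\partial\nu}(\varphi,\nu,\nd\varphi),0\bigr)\in V^\star$ (resp.\ $\bigl(\tfrac{\partialnew\mathscr L}{\partialnew\nu},0\bigr)\in V^\dagger$). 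The two assertions of the lemma are really the same statement read through the isomorphism of Proposition \ref{isomorphic_duals}, so once case (1) is established case (2) follows mechanically via Lemma \ref{lemma:PhiEcontractions}.

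For case (1), I would first expand
\[
\left.\tfrac{d}{d\epsilon}\right|_{\epsilon=0}\!\!\int_M \mathscr L\bigl(\varphi+\epsilon\,\delta\varphi,\nu,\nd(\varphi+\epsilon\,\delta\varphi)\bigr)
= \int_M\!\left(\tfrac{\partial\mathscr L}{\partial\varphi}\bcdot\delta\varphi + \tfrac{\partial\mathscr L}{\partial\zeta}\bcdot\nd\delta\varphi\right),
\]
using the definition of the fiber derivatives and the linearity of $\nd$. I would then apply Proposition \ref{prop:covariantdivergence} with $\chi=\tfrac{\partial\mathscr L}{\partial\zeta}(\varphi,\nu,\nd\varphi)\in\Lambda^{k+1}_m(M,E^*)$ to rewrite
\[
\int_M \tfrac{\partial\mathscr L}{\partial\zeta}\bcdot\nd\delta\varphi = -(-1)^k\!\int_M\!\bigl(\operatorname{div}^{\nabla^*}\tfrac{\partial\mathscr L}{\partial\zeta}\bigr)\bcdot\delta\varphi + \int_{\partial M}\!\iota_{\partial M}^*\!\bigl(\tr\tfrac{\partial\mathscr L}{\partial\zeta}\bigr)\bcdot\iota_{\partial M}^*\delta\varphi.
\]
Grouping interior and boundary contributions and comparing with the pairing $\blangle\cdot,\cdot\brangle_\star$ yields exactly the formula announced in (1). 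In particular, the pair of objects obtained lies in $V^\star = \Lambda_m^k(M,E^*)\times\Lambda_{m-1}^k(\partial M,E^*)$, which is the key content of the lemma: the functional derivative, a priori only in the topological dual $V'$, is actually represented by an element of the restricted dual.

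For case (2), I would transport the statement of (1) along the isomorphism $V^\star\to V^\dagger$ of Proposition \ref{isomorphic_duals} given by $(\Phi_E,\Phi_{E,\partial})$. Lemma \ref{lemma:PhiEcontractions} provides precisely the three identities needed termwise, namely $\zeta\bcdot\delta\varphi=\delta\varphi\bwedge\Phi_E(\zeta)$, $(\operatorname{div}^{\nabla^*}\chi)\bcdot\delta\varphi=\delta\varphi\bwedge{\rm d}^{\nabla^*}\Phi_E(\chi)$, and $(\tr\chi)\bcdot\delta\varphi=\delta\varphi\bwedge\Phi_E(\chi)$, and the compatibility $\Phi_E\circ\tfrac{\partial\mathscr L}{\partial\bullet}=\tfrac{\partialnew\mathscr L}{\partialnew\bullet}$ relates the two realisations of the fiber derivatives. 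Applying these identities to the expression obtained in (1), together with the observation that $\iota_{\partial M}^*$ commutes with $\Phi_E$ up to the boundary analogue $\Phi_{E,\partial}$, one reads off directly the $V^\dagger$ formula, with the $\tr$ on the boundary being replaced by the identity because $\Phi_{E,\partial}$ already absorbs it.

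The only genuinely delicate point is sign bookkeeping: the factor $(-1)^k$ in the divergence theorem must appear with the correct sign in front of $\operatorname{div}^{\nabla^*}$ and ${\rm d}^{\nabla^*}$, and one must be careful that the orientation convention implicit in $\Phi_E$ agrees with the convention used when writing $\iota_{\partial M}^*(\tr\chi)$ as a $(k,m-k-1)$-form on $\partial M$. I would verify these signs by doing a single local computation in the multi-index notation of Remark \ref{remark:multindex}, as in the proofs of Proposition \ref{prop:covariantdivergence} and Lemma \ref{lemma:PhiEcontractions}, which suffices because both sides of each claimed identity are $C^\infty(M)$-linear in $\delta\varphi$.
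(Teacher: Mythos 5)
Your proposal is correct and follows essentially the same route as the paper's proof: case (1) is obtained by differentiating under the integral and applying the covariant divergence theorem (Proposition \ref{prop:covariantdivergence}) with $\chi=\tfrac{\partial\mathscr L}{\partial\zeta}$, and case (2) is then read off by substituting the identities of Lemma \ref{lemma:PhiEcontractions} into the expressions of case (1). The sign and boundary-pullback checks you flag are exactly the points the paper handles via the local computations in Proposition \ref{prop:covariantdivergence} and Lemma \ref{lemma:PhiEcontractions}.
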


\begin{proof}
Let $\delta\varphi\in V$. By using Proposition \ref{prop:covariantdivergence}, we obtain the first part:
\begin{align*}
\frac{\delta L_\nabla}{\delta\varphi}(\varphi,\nu )(\delta\varphi) & =\left.\frac{d}{d \epsilon}\right|_{\epsilon=0}\int_{ M}\mathscr L(\varphi+ \epsilon\,\delta\varphi,\nu ,\nd(\varphi+\epsilon\,\delta\varphi))\\
& =\int_{ M}\left(\frac{\partial\mathscr L}{\partial\varphi}(\varphi,\nu ,\nd\varphi)\bcdot\delta\varphi+\frac{\partial\mathscr L}{\partial\zeta}(\varphi,\nu ,\nd\varphi)\bcdot\nd\delta\varphi\right)\\
& =\int_{ M}\left(\frac{\partial\mathscr L}{\partial\varphi}(\varphi,\nu ,\nd\varphi)- (-1)^{k}\operatorname{div}^{\nabla^*}\left(\frac{\partial\mathscr L}{\partial\zeta}(\varphi,\nu ,\nd\varphi)\right)\right)\bcdot\delta\varphi\\
& \qquad+\int_{\partial M}\iota_{\partial M}^*\left(\tr \frac{\partial\mathscr L}{\partial\zeta}(\varphi,\nu ,\nd\varphi)\right)\bcdot\iota_{\partial M}^*\delta\varphi,\vspace{2mm}\\
\frac{\delta L_\nabla}{\delta\nu }(\varphi,\nu )(\delta \nu ) & =\left.\frac{d}{d \epsilon}\right|_{ \epsilon=0}\int_{ M}\mathscr L(\varphi,\nu + \epsilon\,\delta \nu ,\nd\varphi)=\int_{ M}\frac{\partial\mathscr L}{\partial\nu }(\varphi,\nu ,\nd\varphi)\bcdot\delta \nu.
\end{align*}
The second part is obtained from Lemma \ref{lemma:PhiEcontractions} in the previous expressions; namely,
\begin{align*}
\frac{\delta L_\nabla}{\delta\varphi}(\varphi,\nu )(\delta\varphi) & =\int_{ M}\delta\varphi\bwedge\left(\frac{\partialnew\mathscr L}{\partialnew\varphi}(\varphi,\nu ,\nd\varphi)- (-1)^k{\rm d}^{\nabla^*}\left(\frac{\partialnew\mathscr L}{\partialnew\zeta}(\varphi,\nu ,\nd\varphi)\right)\right)\\
& \qquad+\int_{\partial M}\iota_{\partial M}^*\delta\varphi\bwedge\iota_{\partial M}^*\left(\frac{\partialnew\mathscr L}{\partialnew\zeta}(\varphi,\nu ,\nd\varphi)\right),\vspace{2mm}\\
\frac{\delta L_\nabla}{\delta\nu }(\varphi,\nu )(\delta \nu ) & =\int_{ M}\delta\nu\bwedge\frac{\partialnew\mathscr L}{\partialnew\nu }(\varphi,\nu ,\nd\varphi).
\end{align*}
\end{proof}

\medskip

From the previous Lemma, the differential
\begin{equation*}
{\rm d}L_{\nabla}:TV\to T'(TV),\quad(\varphi, \nu )\mapsto\left(\varphi, \nu ,\frac{\delta L_{\nabla}}{\delta\varphi}(\varphi,\nu ),\frac{\delta L_{\nabla}}{\delta\nu}(\varphi, \nu )\right)
\end{equation*}
of $L_\nabla$ takes values in the restricted cotangent bundle, i.e., either $T^\star(TV)$ or  $T^\dagger(TV)$ depending on the chosen identification. We can therefore apply to it the canonical isomorphisms $\gamma_{T^\star V}$ and $\gamma_{T^\dagger V}$, which is a crucial step in the definition of Lagrange--Dirac systems.

\begin{definition}\rm\label{DiracDiff_VectBundleValuedkForms}
There are two choices to define the \emph{Dirac differential} of the Lagrangian \eqref{eq:lagrangianvectorbundlekforms}, which we distinguish through the corresponding superscript:
\begin{enumerate}
    \item[(i)] By using (i) of Lemma  \ref{lemma:partialderivativevectorbundlekforms}, we define
    \begin{equation*}
    {\rm d}_{ D}^\star L_{\nabla}=\gamma_{T^\star V}\circ{\rm d} L_{\nabla}:TV\to T^\star(T^\star V),\quad (\varphi,\nu )\mapsto\left(\varphi,\frac{\delta L_{\nabla}}{\delta\nu }(\varphi,\nu ),-\frac{\delta L_{\nabla}}{\delta\varphi}(\varphi, \nu ), \nu \right).
    \end{equation*}
    \item[(ii)] By using (ii) of Lemma \ref{lemma:partialderivativevectorbundlekforms}, we define
    \begin{equation*}
    {\rm d}_{ D}^\dagger L_{\nabla}=\gamma_{T^\dagger V}\circ{\rm d} L_{\nabla}:TV\to T^\dagger\big(T^\dagger V\big),\quad(\varphi, \nu )\mapsto\left(\varphi,\frac{\delta L_{\nabla}}{\delta \nu }(\varphi, \nu  ),-\frac{\delta L_{\nabla}}{\delta\varphi}(\varphi, \nu ),\nu \right).
    \end{equation*}
\end{enumerate}
\end{definition}

The concrete expressions of the Dirac differential in terms of the density $\mathscr{L}$ in (i) and (ii) are found by using the results of Lemma \ref{lemma:partialderivativevectorbundlekforms}.
Analogously, for each choice of the restricted dual space we may express the \emph{Legendre transform} of $L_\nabla$,
\begin{equation*}
\mathbb FL_\nabla:TV\to T'V,\quad(\varphi,\nu)\mapsto\mathbb FL_\nabla(\varphi,\nu)=\left(\varphi,\frac{\delta L_\nabla}{\delta\nu}(\varphi,\nu)\right),
\end{equation*}
as follows:
\begin{itemize}
    \item[(i)] For $V^\star=\Lambda_m^k(M,E^*)\times\Lambda_{m-1}^k(\partial M,E^*)$, it reads
    \begin{equation*}
    \mathbb{F}^{\star}L_{\nabla}: TV \to T^{\star}V,\quad(\varphi,\nu)\mapsto\left(\varphi,\frac{\partial \mathscr L}{\partial\nu}(\varphi,\nu,{\rm d}^{\nabla}\varphi),0\right).
    \end{equation*}
    \item[(ii)] For $V^\dagger=\Omega^{m-k}(M,E^*)\times\Omega^{m-k-1}(\partial M,E^*)$, it reads
    \begin{equation*}
    \mathbb{F}^{\dagger}L_{\nabla}: TV \to T^{\dagger}V,\quad(\varphi,\nu)\mapsto\left(\varphi,\frac{\partialnew\mathscr L}{\partialnew\nu}(\varphi,\nu,{\rm d}^{\nabla}\varphi),0\right).
    \end{equation*}
\end{itemize}

\subsection{Lagrange--Dirac dynamical systems}\label{sec:LDVectBundleValued_kforms}

Before introducing Lagrange--Dirac dynamical systems, we define the body and boundary forces by considering the infinite-dimensional version of the external force map $F:TQ\rightarrow T^*Q$ and associated Lagrangian force field $\widetilde{F}:TQ \rightarrow T^*(T^*Q)$ as discussed in \eqref{F} and \eqref{Ftilde}. With our definition of the restricted dual space, this infinite-dimensional version automatically incorporates both body and boundary contributions.

As above, we choose a linear connection $\nabla$ on the vector bundle $\pi_{E,M}:E\to M$ and we consider a Lagrangian $L_\nabla:TV\to\mathbb R$ given by \eqref{eq:lagrangianvectorbundlekforms}.

\begin{definition}\rm\label{ExtForce_VectBundleValuedkForms}
Associated to each choice of the restricted dual, we define \emph{external forces} as follows:
\begin{enumerate}[(i)]
    \item For $V^\star=\Lambda_m^k(M,E^*)\times\Lambda_{m-1}^k(\partial M,E^*)$, an \textit{external force} is a map 
    \begin{equation*}
    F^\star: TV \to T^{\star}V,\quad(\varphi,\nu)\mapsto F^\star(\varphi,\nu)=(\varphi, \mathcal F^{\star}(\varphi,\nu),\mathcal F^\star_\partial(\varphi,\nu)),
    \end{equation*}
    for some $\mathcal F^\star:TV\to\Lambda_m^k(M,E^*)$ and $\mathcal F^\star_\partial:TV\to\Lambda_{m-1}^k(\partial M,E^*)$. The associated \emph{Lagrangian force field} is the map $\widetilde{F}^\star: TV \to T^\star\left(T^\star V\right)$ defined as
    \[
    \left\langle \widetilde{F}^\star(\varphi,\nu), W\right\rangle_\star = \left\langle F^\star(\varphi,\nu), T_{\mathbb F^\star L_\nabla(\varphi,\nu)}\pi_{V}^\star(W) \right\rangle_\star,\qquad (\varphi,\nu) \in TV,~W\in T_{\mathbb F^\star L(\varphi,\nu)}(T^\star V),
    \]
    where $\pi_V^\star:T^{\star}V \to V$ is the natural projection and $T\pi_V^\star:T\left(T^\star V\right) \to TV$ denotes its tangent map. More explicitly, it reads
    \begin{equation*}
    \widetilde{F}^\star(\varphi,\nu)= \left(\varphi,\frac{\partial\mathscr L}{\partial\nu}(\varphi,\nu,{\rm d}^{\nabla}\varphi),0,\mathcal F^\star(\varphi,\nu),\mathcal F^\star_\partial(\varphi,\nu),0 \right).
    \end{equation*}
    \item For $V^\dagger=\Omega^{m-k}(M,E^*)\times\Omega^{m-k-1}(\partial M,E^*)$, an \textit{external force} is a map 
    \begin{equation*}
    F^{\dagger}:TV\to T^{\dagger} V,\quad(\varphi,\nu)\mapsto F^{\dagger}(\varphi,\nu)=(\varphi,\mathcal F^{\dagger}(\varphi,\nu),\mathcal F^{\dagger}_\partial(\varphi,\nu)),
    \end{equation*}
    for some $\mathcal F^{\dagger}:TV\to\Omega^{m-k}(M,E^*)$ and $\mathcal F^{\dagger}_\partial:TV\to\Omega^{m-k-1}(\partial M,E^*)$. The associated \emph{Lagrangian force field} is the map $\widetilde{F}^{\dagger}: TV \to T^{\dagger}\left(T^{\dagger} V\right)$ defined as
    \[
    \left\langle \widetilde{F}^\dagger(\varphi,\nu), W\right\rangle_\dagger = \left\langle F^\dagger(\varphi,\nu), T_{\mathbb F^\dagger L_\nabla(\varphi,\nu)}\pi_{V}^\dagger(W) \right\rangle_\dagger,\qquad (\varphi,\nu) \in TV,~W\in T_{\mathbb F^\dagger L(\varphi,\nu)}(T^\dagger V),
    \]
    where $\pi_V^\dagger:T^{\dagger}V \to V$ is the natural projection and $T\pi_V^\dagger:T\left(T^\dagger V\right) \to TV$ denotes its tangent map. More explicitly, it reads
    \begin{equation*}
    \widetilde{F}^{\dagger}(\varphi,\nu)= \left(\varphi,\frac{\partialnew\mathscr L}{\partialnew\nu}(\varphi,\nu,{\rm d}^{\nabla}\varphi),0,\mathcal F^{\dagger}(\varphi,\nu),\mathcal F^{\dagger}_\partial(\varphi,\nu),0\right).
    \end{equation*}
\end{enumerate}
\end{definition}

We are now ready to give the definition of the Lagrange--Dirac dynamical systems on $V=\Omega^k(M,E)$ with body and boundary forces.

\begin{definition}\rm\label{def:externalforces_VectBundleValuedkforms}
Associated to each choice of the restricted dual, we define the Lagrange--Dirac dynamical systems as follows:
\begin{itemize}
\item[(i)]
For the case $V^{\star}=\Lambda_m^k(M,E^*)\times\Lambda_{m-1}^k(\partial M,E^*)$, a {\it forced Lagrange--Dirac dynamical system} is a quadruple $(V=\Omega^k(M,E), D_{T^{\star}V}, L_{\nabla}, F^{\star})$. A solution for the forced Lagrange--Dirac dynamical system is any curve $(\varphi,\nu, \alpha,\alpha_\partial): [t_0,t_1] \to TV \oplus T^{\star}V$ that satisfies
\begin{equation}\label{ForcedLagDirac_VectBundleValuedkforms_Star}
\big((\varphi,\alpha,\alpha_\partial,\dot\varphi,\dot\alpha,\dot\alpha_\partial), {\rm d}_{ D}^\star L_{ \nabla }(\varphi,\nu)-\widetilde{F}^\star(\varphi,\nu)\big)\in D_{T^\star V}(\varphi,\alpha,\alpha_\partial).
\end{equation}
\vskip 3pt

\item[(ii)] For the case $V^{\dagger}=\Omega^{m-k}(M,E^*)\times\Omega^{m-k-1}(\partial M,E^*)$, a forced Lagrange--Dirac dynamical systems is a quadruple $(V=\Omega^k(M,E), D_{T^{\dagger}V}, L_{\nabla}, F^{\dagger})$. A solution of the forced Lagrange--Dirac dynamical system is any curve $(\varphi,\nu, \alpha,\alpha_\partial): [t_0,t_1] \to TV \oplus T^{\dagger}V$ that satisfies
\begin{equation}\label{ForcedLagDirac_VectBundleValuedkforms_Dagger}
\big((\varphi,\alpha,\alpha_\partial,\dot\varphi,\dot\alpha,\dot\alpha_\partial),{\rm d}_{ D}^{\dagger} L_{\nabla}(\varphi,\nu)-\widetilde{F}^{\dagger}(\varphi,\nu)\big)\in D_{T^{\dagger}V}(\varphi,\alpha,\alpha_\partial).
\end{equation}
\end{itemize}
\end{definition}
%
\begin{proposition}\label{proposition:LDVectBundValued_kforms}\rm
Associated to each choice of the restricted dual space, the following statements hold:
\medskip

\begin{itemize}
    \item[(i)] For the case $V^{\star}=\Lambda_m^k(M,E^*)\times\Lambda_{m-1}^k(\partial M,E^*)$, a curve $(\varphi,\nu, \alpha,\alpha_\partial): [t_0,t_1] \to TV \oplus T^{\star}V$ is a solution of \eqref{ForcedLagDirac_VectBundleValuedkforms_Star} if and only if it satisfies the following system equations:
     \begin{equation}\label{EqMotionForcedLagDirac_VectBundleValuedkforms_Star}
    \left\{\begin{array}{ll}
    \dot\varphi=\nu,&\vspace{4mm}\\
    \displaystyle\alpha=\frac{\partial\mathscr L}{\partial\nu}(\varphi,\nu,{\rm d}^{\nabla}\varphi),\; & \displaystyle\dot\alpha=\frac{\partial\mathscr L}{\partial\varphi}(\varphi,\nu,{\rm d}^{\nabla}\varphi)-(-1)^{k}\operatorname{div}^{ \nabla ^*} \left( \frac{\partial\mathscr L}{\partial\zeta}(\varphi,\nu,{\rm d}^{\nabla}\varphi) \right) +\mathcal F^{\star}(\varphi,\nu),\vspace{2mm}\\
    \alpha_\partial=0, & \displaystyle\dot\alpha_\partial=\iota_{\partial M}^*\left(\tr \frac{\partial\mathscr L}{\partial\zeta}(\varphi,\nu,{\rm d}^{\nabla}\varphi)\right)+\mathcal F^{\star}_{\partial}(\varphi,\nu).
    \end{array}\right.
    \end{equation}
    \vskip 3pt
   
    \item[(ii)] 
    For the case $V^{\dagger}=\Omega^{m-k}(M,E^*)\times\Omega^{m-k-1}(\partial M,E^*)$, a curve $(\varphi,\nu, \alpha,\alpha_\partial): [t_0,t_1] \to TV \oplus T^{\dagger}V$ is a solution of \eqref{ForcedLagDirac_VectBundleValuedkforms_Dagger} if and only if it satisfies the following system equations:
    \begin{equation}\label{EqMotionForcedLagDirac_VectBundleValuedkforms_Dagger}
    \left\{\begin{array}{ll}
    \dot\varphi=\nu,&\vspace{0.4cm}\\
    \displaystyle\alpha=\frac{\partialnew\mathscr L}{\partialnew\nu}(\varphi,\nu,{\rm d}^{\nabla}\varphi),\; & \displaystyle\dot\alpha=\frac{\partialnew\mathscr L}{\partialnew\varphi}(\varphi,\nu,{\rm d}^{\nabla}\varphi)-(-1)^k\nd^* \left( \frac{\partialnew\mathscr L}{\partialnew\zeta}(\varphi,\nu,{\rm d}^{\nabla}\varphi) \right) +\mathcal F^\dagger(\varphi,\nu),\vspace{0.2cm}\\
    \alpha_\partial=0, & \displaystyle\dot\alpha_\partial=\iota_{\partial M}^*\left(\frac{\partialnew\mathscr L}{\partialnew\zeta}(\varphi,\nu,{\rm d}^{\nabla}\varphi)\right)+\mathcal F^\dagger_{\partial}(\varphi,\nu).
    \end{array}\right.
    \end{equation}
\end{itemize}
\end{proposition}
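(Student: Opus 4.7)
The plan is to verify both equivalences by directly unpacking the membership condition in the canonical Dirac structure from Definition \ref{def:diracstructurebundle}, substituting the explicit expressions for the Dirac differential and the Lagrangian force field, and matching base points and fiber components separately. The argument for (i) and (ii) is essentially the same modulo the identifications $\Psi_\star$ and $\Psi_\dagger$, so I would present (i) in detail and then indicate the parallel computation for (ii).

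First, for case (i), I would compute ${\rm d}_{D}^\star L_\nabla(\varphi,\nu)$ by combining Definition \ref{DiracDiff_VectBundleValuedkForms} with Lemma \ref{lemma:partialderivativevectorbundlekforms}(i), obtaining a point of $T^\star(T^\star V)$ whose base point in $T^\star V$ is $(\varphi,\,\tfrac{\partial\mathscr L}{\partial\nu}(\varphi,\nu,{\rm d}^\nabla\varphi),\,0)$ and whose cotangent component is the pair consisting of $-\tfrac{\delta L_\nabla}{\delta\varphi}(\varphi,\nu)$ and $\nu$. Similarly, from Definition \ref{ExtForce_VectBundleValuedkForms}(i), the Lagrangian force field $\widetilde{F}^\star(\varphi,\nu)$ has the same base point $(\varphi,\,\tfrac{\partial\mathscr L}{\partial\nu}(\varphi,\nu,{\rm d}^\nabla\varphi),\,0)$ and cotangent component $(\mathcal F^\star(\varphi,\nu),\mathcal F^\star_\partial(\varphi,\nu),0)$. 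Since the base points coincide, the difference ${\rm d}_D^\star L_\nabla-\widetilde F^\star$ is well defined as an element of $T^\star_{(\varphi,\alpha,\alpha_\partial)}(T^\star V)$ precisely when $(\varphi,\alpha,\alpha_\partial)$ equals that common base point, yielding the primary conditions $\alpha=\tfrac{\partial\mathscr L}{\partial\nu}(\varphi,\nu,{\rm d}^\nabla\varphi)$ and $\alpha_\partial=0$.

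Next, I would read off the Dirac-structure relations $\dot\varphi=\delta\varphi$, $-\dot\alpha=\delta\alpha$, $-\dot\alpha_\partial=\delta\alpha_\partial$ from Definition \ref{def:diracstructurebundle}(i), where $(\delta\alpha,\delta\alpha_\partial,\delta\varphi)$ denotes the cotangent component of ${\rm d}_D^\star L_\nabla-\widetilde F^\star$ computed above. The relation involving $\delta\varphi=\nu$ gives $\dot\varphi=\nu$, while substituting the Lemma \ref{lemma:partialderivativevectorbundlekforms}(i) expressions for $\tfrac{\delta L_\nabla}{\delta\varphi}$ into $-\dot\alpha=\delta\alpha$ and $-\dot\alpha_\partial=\delta\alpha_\partial$ produces exactly the evolution and boundary equations displayed in \eqref{EqMotionForcedLagDirac_VectBundleValuedkforms_Star}. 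The converse direction is immediate: any curve satisfying \eqref{EqMotionForcedLagDirac_VectBundleValuedkforms_Star} manifestly makes all three Dirac-structure relations hold, so the membership \eqref{ForcedLagDirac_VectBundleValuedkforms_Star} is recovered.

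For case (ii), I would repeat the same bookkeeping with $\Psi_\dagger$ in place of $\Psi_\star$, using Definition \ref{DiracDiff_VectBundleValuedkForms}(ii), Lemma \ref{lemma:partialderivativevectorbundlekforms}(ii), Definition \ref{ExtForce_VectBundleValuedkForms}(ii), and Definition \ref{def:diracstructurebundle}(ii); here $\tfrac{\partial\mathscr L}{\partial\bullet}$ is replaced by $\tfrac{\partialnew\mathscr L}{\partialnew\bullet}$, $\operatorname{div}^{\nabla^*}$ is replaced by ${\rm d}^{\nabla^*}$, and $\tr$ is replaced by the identity in the boundary term, yielding \eqref{EqMotionForcedLagDirac_VectBundleValuedkforms_Dagger}. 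No genuine obstacle arises; the only delicate point is keeping track of the sign conventions from $\gamma_{T^\star V}$ and $\gamma_{T^\dagger V}$ and confirming that the base points of ${\rm d}_D^{\star/\dagger}L_\nabla$ and $\widetilde F^{\star/\dagger}$ coincide so that the subtraction is legitimate in the fiber of the restricted cotangent bundle. That these two base points agree is guaranteed by the fact that both are defined from the same Legendre transform, which is the substantive content justifying the formal manipulation.
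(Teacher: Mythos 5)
Your proposal is correct and follows exactly the route the paper takes: its proof is precisely the "direct computation using Definitions \ref{def:diracstructurebundle}, \ref{DiracDiff_VectBundleValuedkForms}, \ref{ExtForce_VectBundleValuedkForms} and \ref{def:externalforces_VectBundleValuedkforms}" that you have spelled out, with the base-point matching giving the Legendre conditions $\alpha=\partial\mathscr L/\partial\nu$, $\alpha_\partial=0$ and the Dirac-structure relations giving the evolution and boundary equations. Your bookkeeping of signs and of the interior/boundary components of $\delta L_\nabla/\delta\varphi$ via Lemma \ref{lemma:partialderivativevectorbundlekforms} is accurate in both cases.
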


\begin{proof}
A direct computation using Definitions \ref{def:diracstructurebundle}, \ref{DiracDiff_VectBundleValuedkForms}, \ref{ExtForce_VectBundleValuedkForms} and \ref{def:externalforces_VectBundleValuedkforms} leads to the desired results.
\end{proof}

\medskip

These equations are the infinite dimensional version of the Lagrange--Dirac system given in \eqref{LDS}. Thanks to the choice of the restricted dual and duality pairing, they incorporate boundary conditions and boundary forces, see the last equation in \eqref{EqMotionForcedLagDirac_VectBundleValuedkforms_Star} and \eqref{EqMotionForcedLagDirac_VectBundleValuedkforms_Dagger}, while keeping the same canonical form as in the finite dimensional case. By eliminating the variables $\nu$, $\alpha$, and $\alpha_\partial$ via the first, second, and third equations, one gets the Euler--Lagrange equations with distributed and boundary forces, see below.

\begin{remark}[Local expression]\rm\label{remark:localexpression}
Let $(x^i)$, $1\leq i\leq m$, be local coordinates on $M$, as in Remark \ref{remark:multindex}, and $\{B_a\mid 1\leq a\leq n\}$ be a basis of local sections of $\pi_{E,M}:E\to M$, as in the proof of Proposition \ref{prop:covariantdivergence}. If the intersection of the coordinate domain with $\partial M$ is non-empty, the coordinates are assumed to be adapted to the boundary, i.e.,
\begin{equation*}
\partial M=\{(x^1,\hdots,x^m)\in\mathbb R^m\mid x^m=0\}.
\end{equation*}
The Christoffel symbols of $\nabla$ are denoted by $\Gamma_{i,b}^a\in C^\infty(M)$, $1\leq i\leq m$, $1\leq a,b\leq m$. Given a multi-index $I$ of length $k$, we define the following sets of indices:
\begin{align*}
\mathcal I & =\{(J,r)\in\mathbb N^{k+1}\times\{1,\dots,k+1\}\mid 1\leq j_1<\hdots<j_{k+1}\leq m,~J_r=I\}.
\end{align*}
Analogously, we introduce the following set:
\begin{equation*}
\mathcal I_m =\{I\in\mathbb N^k\mid 1\leq i_1<\hdots<i_k<m\},
\end{equation*}
and define $I^m=(i_1,\dots,i_k,m)\in\mathbb N^{k+1}$ for each $I\in\mathcal I_m$.

On the other hand, the local expression of $\alpha$ (and similar for $\alpha_\partial$, and the body and boundary forces) is the same regardless the choice of the restricted dual; namely,
\begin{equation*}
\alpha=\alpha_a^I\,\partial_I\otimes d^m m\otimes B^a\in\Lambda_m^k(M,E^*),\qquad\alpha=\alpha_a^I\,d_I^{m-k}x\,\otimes B^a\in\Omega^{m-k}(M,E^*).
\end{equation*}
For that reason, we will drop the superscripts $\star$ and $\dagger$ when working in coordinates.

Therefore, both systems of equations given in \eqref{EqMotionForcedLagDirac_VectBundleValuedkforms_Star} and \eqref{EqMotionForcedLagDirac_VectBundleValuedkforms_Dagger} have the same local expression, which reads:
\begin{equation}\label{EqMotionForcedLagDirac_VectBundleValuedkforms_local}
\left\{\begin{array}{ll}
\dot\varphi^a_I=\nu^a_I,&\vspace{4mm}\\
\displaystyle\alpha_a^I=\frac{\partial\mathscr L}{\partial\nu^a_I},\qquad & \displaystyle\dot\alpha^I_a=\frac{\partial\mathscr L}{\partial\varphi^a_I}-(-1)^{k}\bigg(\sum_{(J,r)\in\mathcal I}\partial_{j_r}\frac{\partial\mathscr L}{\partial\zeta_J^a}-\Gamma_{j_r,a}^b\frac{\partial\mathscr L}{\partial\zeta_J^b}\bigg) +\mathcal F^I_a,\vspace{2mm}\\
(\alpha_\partial)_a^I=0, & \displaystyle(\dot\alpha_\partial)_a^I=\left.\frac{\partial\mathscr L}{\partial\zeta_{I^m}^a}\right|_{x^m=0}+(\mathcal F_\partial)_a^I,\qquad I\in\mathcal I_m.
\end{array}\right.
\end{equation}
For $k=0$, the previous expression reduces to \eqref{EqMotionForcedLagDirac_VectBundleValuedkforms_k1}. For $k=1$, it reduces to:
\begin{equation*}
\left\{\begin{array}{ll}
\dot\varphi^a_i=\nu^a_i,&\vspace{4mm}\\
\displaystyle\alpha_a^i=\frac{\partial\mathscr L}{\partial\nu^a_i},\qquad & \displaystyle\dot\alpha^i_a=\frac{\partial\mathscr L}{\partial\varphi^a_i}+\sum_{j=1}^m\epsilon(i,j)\,\bigg(\partial_{j}\frac{\partial\mathscr L}{\partial\zeta_{(i,j)}^a}-\Gamma_{j,a}^b\frac{\partial\mathscr L}{\partial\zeta_{(i,j)}^b}\bigg)+\mathcal F^i_a,\vspace{2mm}\\
(\alpha_\partial)_a^i=0, & \displaystyle(\dot\alpha_\partial)_a^i=\left.\frac{\partial\mathscr L}{\partial\zeta_{(i,m)}^a}\right|_{x^m=0}+(\mathcal F_\partial)_a^i,\qquad 1\leq i<m,
\end{array}\right.
\end{equation*}
where we have used that $\zeta\in\Omega^{k+1}(M,E)$ and, thus $\zeta_{(i,j)}=-\zeta_{(j,i)}$ for each $1\leq i<j\leq m$, and we have denoted:
\begin{equation*}
\epsilon(i,j)=\begin{cases}
-1,\qquad & 1\leq j\leq i-1,\\
0, & i=j,\\
+1, & i+1\leq j\leq m.
\end{cases}
\end{equation*}
\end{remark}

\subsection{Energy considerations}

The \emph{energy density} associated with $\mathscr{L}$ is $\mathscr{E}: W^k(M,E) \rightarrow \bigwedge^mT^*M$ given by
\begin{align*}
\mathscr{E}(\varphi_x,\nu_x,\zeta_x) & =\frac{\partial\mathscr{L}}{\partial\nu}(\varphi_x,\nu_x,\zeta_x)\bcdot\nu_x-\mathscr{L}(\varphi_x,\nu_x,\zeta_x)\\
& =\nu_x\bwedge\frac{\partialnew\mathscr{L}}{\partialnew\nu}(\varphi_x,\nu_x,\zeta_x)-\mathscr{L}(\varphi_x,\nu_x,\zeta_x),
\end{align*}
for each $(\varphi_x,\nu_x,\zeta_x)\in W^k(M,E)_x$. Note that both choices of the restricted dual space can be used, yielding the same energy density.

\begin{proposition}[Energy balance]\rm\label{prop:energybalancevectorbundle}
Associated to each choice of the restricted dual, we have the following local and global form of the energy balance along the solutions of the forced Lagrange--Dirac equations:
\begin{itemize}
    \item[(i)] For $V^\star=\Lambda_m^k(M,E^*)\times\Lambda_{m-1}^k(\partial M,E^*)$, let $(\varphi,\nu,\alpha,\alpha_\partial):[t_0,t_1]\to TV\oplus T^\star V$ be a solution of \eqref{ForcedLagDirac_VectBundleValuedkforms_Star}. Then:
    \begin{equation}\label{EB_i_local}
    \frac{\partial }{\partial t}\mathscr{E}(\varphi,\nu,\nd\varphi)=-\operatorname{div}\left(\frac{\partial\mathscr L}{\partial\zeta}(\varphi,\nu,\nd\varphi)\bcdot\nu\right)+\mathcal F^\star(\varphi,\nu)\bcdot\nu,
    \end{equation}
    and
    \begin{equation}\label{EB_i}
    \frac{d}{dt} \int_M \mathscr{E}(\varphi,\nu,\nd\varphi)=\underbrace{\int_M\mathcal F^\star(\varphi,\nu)\bcdot\nu}_{\text{spatially distributed contribution}}+\underbrace{\int_{\partial M}\mathcal F^\star_\partial(\varphi,\nu)\bcdot \iota_{\partial M}^*\nu}_{\text{boundary contribution}}.
    \end{equation}

    \item[(ii)] For $V^{\dagger}=\Omega^{m-k}(M,E^*)\times\Omega^{m-k-1}(\partial M,E^*)$, let $(\varphi,\nu,\alpha,\alpha_\partial):[t_0,t_1]\to TV\oplus T^\dagger V$ be a solution of \eqref{ForcedLagDirac_VectBundleValuedkforms_Dagger}. Then:
    \begin{equation}\label{EB_ii_local}
    \frac{\partial }{\partial t}\mathscr{E}(\varphi,\nu,\nd\varphi)=-{\rm d}\left(\nu\bwedge\frac{\partialnew\mathscr L}{\partialnew\zeta}(\varphi,\nu,\nd\varphi)\right)+\nu\bwedge\mathcal F^\dagger(\varphi,\nu),
    \end{equation}
    and
    \begin{equation}\label{EB_ii}
    \frac{d}{dt} \int_M \mathscr{E}(\varphi,\nu,\nd\varphi)=\underbrace{\int_M\nu\bwedge\mathcal F^\dagger(\varphi,\nu)}_{\text{spatially distributed contribution}}+\underbrace{\int_{\partial M}\iota_{\partial M}^*\nu\bwedge\mathcal F^\dagger_\partial(\varphi,\nu)}_{\text{boundary contribution}}.
    \end{equation}
\end{itemize}
\end{proposition}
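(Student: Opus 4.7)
The plan is to derive the local form first, then integrate and use the boundary equation to obtain the global form. I will do this for case (i) in detail and indicate the parallel argument for (ii).

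For case (i), I would start from the definition $\mathscr{E}(\varphi,\nu,\zeta)=\frac{\partial\mathscr L}{\partial\nu}\bcdot\nu-\mathscr L$ and differentiate along a solution, using $\dot\varphi=\nu$ (so $\dot\zeta=\nd\nu$) and the momentum relation $\alpha=\frac{\partial\mathscr L}{\partial\nu}$. The chain rule gives
\begin{equation*}
\frac{\partial}{\partial t}\mathscr E=\dot\alpha\bcdot\nu+\alpha\bcdot\dot\nu-\frac{\partial\mathscr L}{\partial\varphi}\bcdot\nu-\frac{\partial\mathscr L}{\partial\nu}\bcdot\dot\nu-\frac{\partial\mathscr L}{\partial\zeta}\bcdot\nd\nu,
\end{equation*}
and the two $\dot\nu$ terms cancel. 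Now I would substitute the evolution equation for $\dot\alpha$ from \eqref{EqMotionForcedLagDirac_VectBundleValuedkforms_Star}, so that $\frac{\partial\mathscr L}{\partial\varphi}\bcdot\nu$ cancels and what remains is
\begin{equation*}
\frac{\partial}{\partial t}\mathscr E=-(-1)^k\bigl(\operatorname{div}^{\nabla^*}\tfrac{\partial\mathscr L}{\partial\zeta}\bigr)\bcdot\nu-\tfrac{\partial\mathscr L}{\partial\zeta}\bcdot\nd\nu+\mathcal F^\star\bcdot\nu.
\end{equation*}
Applying the pointwise identity in Proposition \ref{prop:covariantdivergence}, with $\chi=\frac{\partial\mathscr L}{\partial\zeta}$ and $\delta\varphi=\nu$, collapses the first two terms into $-\operatorname{div}(\frac{\partial\mathscr L}{\partial\zeta}\bcdot\nu)$, yielding \eqref{EB_i_local}.

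For the global version \eqref{EB_i}, I would integrate \eqref{EB_i_local} over $M$. The term $\int_M\operatorname{div}(\frac{\partial\mathscr L}{\partial\zeta}\bcdot\nu)$ becomes a boundary integral by Stokes' theorem; using the identity $\tr(\chi\bcdot\nu)=(\tr\chi)\bcdot\nu$ (established in the proof of Proposition \ref{prop:covariantdivergence}) and compatibility of pullbacks with contraction, this equals $\int_{\partial M}\iota_{\partial M}^*\bigl(\tr\tfrac{\partial\mathscr L}{\partial\zeta}\bigr)\bcdot\iota_{\partial M}^*\nu$. Finally, the constraint $\alpha_\partial=0$ forces $\dot\alpha_\partial=0$, so the boundary equation in \eqref{EqMotionForcedLagDirac_VectBundleValuedkforms_Star} reads $\iota_{\partial M}^*\bigl(\tr\tfrac{\partial\mathscr L}{\partial\zeta}\bigr)=-\mathcal F^\star_\partial$, converting the boundary integral into the stated $\mathcal F^\star_\partial$-contribution.

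For case (ii), the same computation would go through with $\bwedge$ in place of $\bcdot$ and $\frac{\partialnew\mathscr L}{\partialnew\,\cdot\,}$ in place of $\frac{\partial\mathscr L}{\partial\,\cdot\,}$. The analogue of the covariant divergence theorem needed here is $\operatorname{div}(\chi\bcdot\nu)={\rm d}(\nu\bwedge\Phi_E(\chi))$ together with $\chi\bcdot\nd\nu+(-1)^k(\operatorname{div}^{\nabla^*}\chi)\bcdot\nu=\operatorname{div}(\chi\bcdot\nu)$, which by Lemma \ref{lemma:PhiEcontractions} translates into ${\rm d}\bigl(\nu\bwedge\tfrac{\partialnew\mathscr L}{\partialnew\zeta}\bigr)=\nu\bwedge{\rm d}^{\nabla^*}\tfrac{\partialnew\mathscr L}{\partialnew\zeta}+(-1)^k\nd\nu\bwedge\tfrac{\partialnew\mathscr L}{\partialnew\zeta}$ (with appropriate signs from moving the wedge). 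Alternatively, one can simply transport case (i) to case (ii) via the isomorphism in Proposition \ref{isomorphic_duals}. Stokes' theorem on ordinary forms and the boundary condition $\iota_{\partial M}^*\bigl(\tfrac{\partialnew\mathscr L}{\partialnew\zeta}\bigr)=-\mathcal F^\dagger_\partial$ from \eqref{EqMotionForcedLagDirac_VectBundleValuedkforms_Dagger} then deliver \eqref{EB_ii_local} and \eqref{EB_ii}.

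The only step requiring care is keeping track of the sign $(-1)^k$ in the divergence identity and verifying that it exactly matches the sign appearing in the evolution equation for $\dot\alpha$; this cancellation is what makes the $\frac{\partial\mathscr L}{\partial\zeta}$-terms combine into a clean divergence. No deep argument is needed beyond the covariant divergence theorem already established.
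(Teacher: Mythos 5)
Your proposal is correct and follows essentially the same route as the paper: differentiate the energy density along a solution, cancel the $\dot\nu$ terms, substitute the evolution equation for $\dot\alpha$, collapse the remaining $\frac{\partial\mathscr L}{\partial\zeta}$-terms via Proposition \ref{prop:covariantdivergence} (resp.\ the Leibniz rule for the wedge in case (ii)), and then integrate, using Stokes' theorem together with $\dot\alpha_\partial=0$ to convert the boundary term into the force contribution. The only slip is the placement of $(-1)^k$ in your stated Leibniz identity for case (ii) — the correct form is ${\rm d}\bigl(\nu\bwedge\sigma\bigr)={\rm d}^\nabla\nu\bwedge\sigma+(-1)^k\,\nu\bwedge{\rm d}^{\nabla^*}\sigma$ with $\sigma=\frac{\partialnew\mathscr L}{\partialnew\zeta}$ — but you flag exactly this sign as the point requiring verification, and it does work out.
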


\begin{proof}
For the case $V^{\star}=\Lambda_m^k(M, E^*)\times\Lambda_{m-1}^k(\partial M, E^*)$, 
by taking the time derivative of $\mathscr{E}(\varphi,\nu,\nabla\varphi)$ along the solution curve that satisfies the system equations \eqref{EqMotionForcedLagDirac_VectBundleValuedkforms_Star},  it follows
\begin{equation*}
\begin{split}
\frac{\partial }{\partial t}\mathscr{E}(\varphi,\nu,\nd\varphi)
&=\left(\frac{\partial }{\partial t}\frac{\partial\mathscr{L}}{\partial\nu}\right)\bcdot \nu+\frac{\partial\mathscr{L}}{\partial\nu} \bcdot \dot\nu-\left(\frac{\partial\mathscr{L}}{\partial\varphi}\bcdot  \dot\varphi+\frac{\partial\mathscr{L}}{\partial\nu}\bcdot  \dot\nu+\frac{\partial\mathscr{L}}{\partial\zeta}\bcdot \nd\nu\right)\\
&=\left(-(-1)^k\operatorname{div}^{ \nabla ^*}\frac{\partial\mathscr{L}}{\partial\zeta}+\mathcal F^{\star}(\varphi,\nu)\right)\bcdot \nu-\frac{\partial\mathscr{L}}{\partial\zeta}\bcdot \nd\nu\\
&=-\operatorname{div}\left(\frac{\partial\mathscr L}{\partial\zeta}\bcdot\nu\right)+\mathcal F^\star(\varphi,\nu)\bcdot\nu,
\end{split}
\end{equation*}
where we used Proposition \ref{prop:covariantdivergence}. 
By integrating on $M$ and using integration by parts, we obtain
\begin{equation*}
\begin{split}
\frac{d}{dt} \int_M \mathscr{E}(\varphi,\nu, \nd\varphi)&=\int_{M}\left[-\operatorname{div}\left(\frac{\partial\mathscr{L}}{\partial\zeta}\bcdot \nu\right)+\mathcal F^{\star}(\varphi,\nu)\bcdot\nu\right]\\
&=\int_{M}\mathcal F^{\star}(\varphi,\nu)\bcdot \nu-\int_{\partial M}\iota_{\partial M}^*\left(\tr\frac{\partial\mathscr L}{\partial\zeta}\right)\bcdot \iota_{\partial M}^*\nu\\
&=\int_{M}\mathcal F^{\star}(\varphi,\nu)\bcdot \nu + \int_{\partial M}\mathcal F^\star_{\partial}(\varphi,\nu)\bcdot \iota_{\partial M}^*\nu.
\end{split}
\end{equation*}
Analogously, for the case $V^{\dagger}=\Omega^{m-k}(M, E^*)\times\Omega^{m-k-1}(\partial M, E^*)$, by taking the time derivative of $\mathscr{E}(\varphi,\nu,\nd\varphi)$ along the solution curve that satisfies the system equations \eqref{EqMotionForcedLagDirac_VectBundleValuedkforms_Dagger},  it follows
\begin{equation*}
\begin{split}
\frac{\partial }{\partial t}\mathscr{E}(\varphi,\nu,\nd\varphi)
&=\nu\bwedge\frac{\partial }{\partial t}\frac{\partialnew\mathscr{L}}{\partialnew\nu}+\dot\nu\bwedge\frac{\partialnew\mathscr{L}}{\partialnew\nu}-\left(\dot\varphi\bwedge\frac{\partialnew\mathscr{L}}{\partialnew\varphi}+\dot\nu\bwedge\frac{\partialnew\mathscr{L}}{\partialnew\nu}+\nd\nu\bwedge\frac{\partialnew\mathscr{L}}{\partialnew\zeta}\right)\\
&=\nu\bwedge\left(- (-1)^k\nd^*\frac{\partialnew\mathscr L}{\partialnew\zeta}+\mathcal F^\dagger(\varphi,\nu)\right)-\nd\nu\bwedge\frac{\partialnew\mathscr{L}}{\partialnew\zeta}\\
&=-{\rm d}\left(\nu\bwedge\frac{\partialnew\mathscr L}{\partialnew\zeta}\right)+\nu\bwedge\mathcal F^\dagger(\varphi,\nu),
\end{split}
\end{equation*}
where we have used the Leibniz rule for the covariant exterior derivative:
\begin{equation*}
{\rm d}\left(\nu\bwedge\frac{\partialnew\mathscr L}{\partialnew\zeta}\right)={\rm d}^\nabla\nu\bwedge\frac{\partialnew\mathscr L}{\partialnew\zeta}+(-1)^k\,\nu\bwedge\left({\rm d} ^{ \nabla ^*}\frac{\partialnew\mathscr L}{\partialnew\zeta}\right).
\end{equation*}    
By integrating on $M$ and using integration by parts, we conclude:
\begin{align*}
\frac{d}{dt} \int_M \mathscr{E}(\varphi,\nu,\nd\varphi) & =\int_{M}\left[-{\rm d}\left(\nu\bwedge\frac{\partialnew\mathscr L}{\partialnew\zeta}\right)+\nu\bwedge\mathcal F^\dagger(\varphi,\nu)\right]\\
& =\int_{M}\nu\bwedge\mathcal F^\dagger(\varphi,\nu)-\int_{\partial M}\iota_{\partial M}^*\nu\bwedge\iota_{\partial M}^*\left(\frac{\partialnew\mathscr L}{\partialnew\zeta}\right)\\
& =\int_{M}\nu\bwedge\mathcal F^\dagger(\varphi,\nu)+\int_{\partial M}\iota_{\partial M}^*\nu\bwedge\mathcal F^\dagger_\partial(\varphi,\nu).    
\end{align*}
\end{proof}

\medskip

\begin{remark}[Expressions of the energy flux density]\label{energy_flux_density}{\rm The local energy balance equations allow for the identification of the general expression of the energy flux density in terms of $\mathscr{L}$, see \eqref{EB_i_local} and \eqref{EB_ii_local}. It is given by the vector field density  $S=\frac{\partial\mathscr L}{\partial\zeta}\bcdot\nu$ or the $(m-1)$-form $\mathsf{S}=\nu\bwedge\frac{\partialnew\mathscr L}{\partialnew\zeta}$, depending on the chosen restricted dual.}
\end{remark}

\begin{remark}[Interior and boundary contributions]\label{energy_balance_rmk}\rm
The global energy balance equations \eqref{EB_i} and \eqref{EB_ii} show the explicit form of each contribution to the energy change, both within the entire domain and through its boundary.
These energy balance equations  can be regarded as Lagrangian analogs of the energy balance equation for distributed (infinite dimensional) port-Hamiltonians systems with external variables; see equation (48) in \cite{VdSMa2002}, extended here to systems on bundle-valued differential forms.

Note that $\mathcal F_\partial^\star\in \Lambda^k_{m-1}(\partial M, E^*)$ and $\iota_{\partial M}^*\nu\in \Omega^k(\partial M, E)$ are respectively understood as the external effort and flow variables on the boundary $\partial M$, while $\mathcal F^\star\in \Lambda^k_m(M, E^*)$ and $\nu\in \Omega^k(M,E)$ are the external effort and flow variables on $M$. Their associated interior and boundary power densities are obtained via the contraction operation, such as in $\mathcal{F}^\star\bcdot\nu\in \Omega^m(M)$ and $\mathcal{F}^\star_\partial \bcdot \iota_{\partial M}^*\nu\in \Omega^{m-1}(\partial M)$.

Similarly, when the restricted dual $V^\dagger$ is used, the external effort variables on the boundary $\partial M$, resp., on $M$ are given as $\mathcal F_\partial^\dagger\in \Omega^{m-k-1}(\partial M, E^*)$, resp., $\mathcal F^\dagger\in \Omega^{m-k}( M, E^*)$, while the flow variables are unchanged. The associated interior and boundary power densities are now obtained via the wedge operation, such as in $\nu \bwedge\mathcal{F}^\dagger\in \Omega^m(M)$ and $\iota_{\partial M}^*\nu\bwedge\mathcal{F}^\dagger_\partial\in \Omega^{m-1}(\partial M)$.

The choice of the most appropriate form of power density, hence of restricted dual, depends on the problem considered.
\end{remark}

\subsection{Variational principles for Lagrange--Dirac dynamical systems on the space of bundle valued forms}\label{VP}

As commented earlier, the Lagrange--Dirac approach that we develop keeps intact the canonical geometric structure of finite dimensional systems. As a consequence, it also has naturally associated variational formulations.

A first variational formulation is the \textit{Lagrange--d'Alembert principle}, see \eqref{LDA} for the finite dimensional setting, which is a critical action principle for curves $q(t)\in Q$ in the configuration space, here represented by the bundle-valued form $\varphi(t)\in V$. It underlies the Lagrange--d'Alembert equations for $q(t)$ which are issued from the Lagrange--Dirac dynamical system by expressing the momentum and velocity variables $p(t)$ and $v(t)$, here given by $\alpha(t)$, $\alpha_\partial(t)$, and $\nu(t)$, in terms of the configuration curve only. This principle is stated in \S\ref{paragr_LDA}.

It is also possible to develop a critical action principle directly for curves in the Pontryagin bundle, the \textit{Lagrange--d'Alembert--Pontryagin principle}, see \eqref{LDAP} for the finite dimensional setting, which characterizes the solution curves $(q(t),v(t),p(t))\in TQ\oplus T^*Q$ of the Lagrange--Dirac dynamical system on the  Pontryagin bundle, here represented by $(\varphi(t), \nu(t), \alpha(t),\alpha_\partial (t))\in T V\oplus T^\star V$ (or $T V\oplus T^\dagger V$). This principle is stated in \S\ref{paragr_LDAP}.

\subsubsection{The Lagrange--d'Alembert principle}\label{paragr_LDA}

As earlier, we fix a linear connection $\nabla$ on $\pi_{E,M}:E\to M$ and consider a Lagrangian $L_{\nabla}:TV\to\mathbb R$  defined through a density, as in \eqref{eq:lagrangianvectorbundlekforms}. We aim to find here a variational formulation underlying the
\textit{Lagrange--d'Alembert equations}:
\begin{equation}\label{LdA_BundleValuedkforms_star}
\left\{\begin{array}{l}
\displaystyle\frac{\partial }{\partial t}\frac{\partial\mathscr L}{\partial\nu}(\varphi,\dot\varphi,\nd\varphi)=\frac{\partial\mathscr L}{\partial\varphi}(\varphi,\dot\varphi,\nd\varphi)-(-1)^{k}\operatorname{div}^{ \nabla ^*}\frac{\partial\mathscr L}{\partial\zeta}(\varphi,\dot\varphi,\nd\varphi)+\mathcal F^\star(\varphi,\dot{\varphi}),\vspace{2mm}\\ 
\displaystyle\mathcal F_\partial^\star(\varphi, \dot\varphi)=-\iota_{\partial M}^*\left(\tr\frac{\partial\mathscr L}{\partial\zeta}(\varphi,\dot\varphi,\nd\varphi)\right),
\end{array}\right.
\end{equation}
and
\begin{equation}\label{LdA_BundleValuedkforms_dagger}
\left\{\begin{array}{l}
\displaystyle\frac{\partial }{\partial t}\frac{\partialnew\mathscr L}{\partialnew\nu}(\varphi,\dot\varphi,\nd\varphi)=\frac{\partialnew\mathscr L}{\partialnew\varphi}(\varphi,\dot\varphi,\nd\varphi)-(-1)^k\nd^* \left( \frac{\partialnew\mathscr L}{\partialnew\zeta}(\varphi,\dot\varphi,\nd\varphi) \right) +\mathcal F^{\dagger}(\varphi,\dot{\varphi}),\vspace{2mm}\\
\displaystyle\mathcal F^{\dagger}_\partial(\varphi, \dot\varphi)=-\iota_{\partial M}^*\left(\frac{\partialnew\mathscr L}{\partialnew\zeta}(\varphi,\dot\varphi,\nd\varphi)\right),
\end{array}\right.
\end{equation}
which are found from \eqref{EqMotionForcedLagDirac_VectBundleValuedkforms_Star} and \eqref{EqMotionForcedLagDirac_VectBundleValuedkforms_Dagger} by eliminating the variables $\alpha(t)$, $\alpha_\partial(t)$, and $\nu(t)$.

\begin{proposition}\label{HamPrinLag_BundleValuedkForms}\rm
Associated to each choice of the restricted dual space, we have the following \emph{Lagrange--d'Alembert variational principles}:
\begin{itemize}
    \item[\bf (i)] For $V^\star=\Lambda_m^k(M, E^*)\times\Lambda_{m-1}^k(\partial M, E^*)$, a curve $\varphi: [t_{0},t_{1}] \to\Omega^{k}(M, E)$ is critical for the Lagrange--d'Alembert action functional, i.e.,
    \begin{equation*}
    \begin{split}
    &\delta\int_{t_{0}}^{t_{1}} L_{\nabla}(\varphi, \dot{\varphi})\,dt+\int_{t_{0}}^{t_{1}} \blangle F^{\star}(\varphi, \dot{\varphi}), \delta \varphi \brangle_{\star}\,dt\\
    &=\delta\int_{t_{0}}^{t_{1}}\left( \int_{M}\mathscr L(\varphi,\dot\varphi,\nd\varphi)\right)dt+\int_{t_{0}}^{t_{1}}\left(\int_{M}\mathcal F^{\star}(\varphi,\dot{\varphi})\bcdot \delta{\varphi}+\int_{\partial M}\mathcal F^{\star}_\partial(\varphi,\dot{\varphi})\bcdot \iota^*_{\partial M}\delta{\varphi}\right) dt=0,
    \end{split}
    \end{equation*}
    for free variations $\delta\varphi$ vanishing at $t=t_0, t_1$, if and only if it satisfies the \emph{Lagrange--d'Alembert equations} \eqref{LdA_BundleValuedkforms_star}.
    \item[\bf (ii)] For $V^{\dagger}=\Omega^{m-k}(M, E^*)\times\Omega^{m-k-1}(\partial M, E^*)$, a curve $\varphi: [t_{0},t_{1}] \to\Omega^{k}(M, E)$ is critical for the Lagrange--d'Alembert action functional, i.e.,
    \begin{equation*}
    \begin{split}
    &\delta\int_{t_{0}}^{t_{1}} L_{\nabla}(\varphi, \dot{\varphi})\,dt+\int_{t_{0}}^{t_{1}} \blangle F^{\dagger}(\varphi, \dot{\varphi}), \delta \varphi\brangle_{\dagger} dt\\
    &=\delta\int_{t_{0}}^{t_{1}}\left( \int_{M}\mathscr L(\varphi,\dot\varphi,\nd\varphi)\right)\,dt+\int_{t_{0}}^{t_{1}}\left(\int_{M}\delta{\varphi}\bwedge\mathcal F^{\dagger}(\varphi,\dot{\varphi})+\int_{\partial M}\delta{\varphi}\bwedge\mathcal F^{\dagger}_\partial(\varphi,\dot{\varphi})\right) \,dt=0,
    \end{split}
    \end{equation*}
    for free variations $\delta\varphi$ vanishing at $t=t_0, t_1$, if and only if it satisfies the \emph{Lagrange--d'Alembert equations} \eqref{LdA_BundleValuedkforms_dagger}.
\end{itemize}
\end{proposition}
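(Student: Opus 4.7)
The plan is to establish both cases in parallel, differing only in the choice of identification for the restricted dual, since the two pairings $\blangle\cdot,\cdot\brangle_\star$ and $\blangle\cdot,\cdot\brangle_\dagger$ are linked through the isomorphism of Proposition \ref{isomorphic_duals}. First I would compute the variation of the action functional directly from the definition of the partial functional derivatives:
\begin{equation*}
\delta\int_{t_0}^{t_1} L_\nabla(\varphi,\dot\varphi)\,dt = \int_{t_0}^{t_1}\left(\frac{\delta L_\nabla}{\delta\varphi}(\varphi,\dot\varphi)(\delta\varphi) + \frac{\delta L_\nabla}{\delta\nu}(\varphi,\dot\varphi)(\delta\dot\varphi)\right)dt,
\end{equation*}
and then integrate by parts in time on the second summand. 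Since $\delta\varphi$ vanishes at $t=t_0,t_1$, no temporal boundary terms appear, leaving $-\frac{d}{dt}\frac{\delta L_\nabla}{\delta\nu}(\varphi,\dot\varphi)(\delta\varphi)$ in the integrand.

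Next I would invoke Lemma \ref{lemma:partialderivativevectorbundlekforms} to rewrite the partial functional derivatives in terms of the fiber derivatives of the density $\mathscr{L}$, picking the first identity for case (i) and the second for case (ii). This step contains the crucial integration by parts in the spatial variables: it is exactly Proposition \ref{prop:covariantdivergence} (together with its wedge-product counterpart through Lemma \ref{lemma:PhiEcontractions}) that splits the contribution of $\nd\delta\varphi$ into an interior divergence term and a boundary trace term, producing the components of $\frac{\delta L_\nabla}{\delta\varphi}$ in the restricted dual. Adding the external force integrals, the total variation takes the form
\begin{equation*}
\int_{t_0}^{t_1}\left[\int_M (\text{interior integrand})\bcdot\delta\varphi + \int_{\partial M}(\text{boundary integrand})\bcdot\iota_{\partial M}^*\delta\varphi\right]dt
\end{equation*}
in case (i), and analogously with wedge products in case (ii).

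To conclude, I would split the argument in two by localization. First, restricting to variations $\delta\varphi$ with support compactly contained in the interior of $M$ (so that $\iota_{\partial M}^*\delta\varphi=0$), the fundamental lemma of the calculus of variations applied to the interior integrand yields the first equation of \eqref{LdA_BundleValuedkforms_star} (respectively \eqref{LdA_BundleValuedkforms_dagger}). Then, allowing general variations $\delta\varphi$ whose pullback to $\partial M$ is arbitrary, and using that the interior equation already forces the interior integrand to vanish, the fundamental lemma on $\partial M$ yields the boundary condition in the second equation. The converse direction, that any solution of \eqref{LdA_BundleValuedkforms_star} (resp.\ \eqref{LdA_BundleValuedkforms_dagger}) makes the action critical, is immediate by reversing these manipulations.

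The main obstacle will be bookkeeping of signs and the $(-1)^k$ and $(-1)^{k(m-k)}$ factors arising from the interaction between the covariant exterior derivative, the trace operator, and the map $\Phi_E$ of Lemma \ref{lemma:invPhiE}; but all of these have already been tracked in Proposition \ref{prop:covariantdivergence} and Lemma \ref{lemma:PhiEcontractions}, so the verification reduces to invoking these identities in the correct order. No new analytical input is needed beyond what has been developed in \S\ref{sec:divergenceVectValued_kforms} and \S\ref{sec:partialderivativesvectorkforms}.
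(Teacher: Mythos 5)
Your proposal is correct and follows essentially the same route as the paper's proof: expand the variation of the action, integrate by parts in time (using that $\delta\varphi$ vanishes at the endpoints) and in space via Proposition \ref{prop:covariantdivergence}, collect interior and boundary integrands, and conclude by the arbitrariness of $\delta\varphi$. The only cosmetic differences are that you channel the spatial integration by parts through Lemma \ref{lemma:partialderivativevectorbundlekforms} rather than redoing it inline, and that you spell out the standard localization step (compactly supported variations first, then general ones), which the paper leaves implicit.
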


\begin{proof}
To prove {\bf (i)} we compute the critical condition as follows:
\begin{align*}
0 & =\int_{t_0}^{t_1}\int_M\left(\frac{\partial\mathscr L}{\partial\varphi}\bcdot\delta\varphi+\frac{\partial\mathscr L}{\partial\nu}\bcdot\delta\dot\varphi+\frac{\partial\mathscr L}{\partial\zeta}\bcdot\delta\nd\varphi\right)dt+\int_{t_0}^{t_1}\langle F^\star(\varphi,\dot\varphi),\delta\varphi\rangle_\star\\
& =\int_{t_0}^{t_1}\bigg[\int_M\left(\frac{\partial\mathscr L}{\partial\varphi}-\frac{\partial}{\partial t}\frac{\partial\mathscr L}{\partial\nu}-(-1)^k{\rm div}^{\nabla^*}\frac{\partial\mathscr L}{\partial\zeta}+\mathcal F^\star(\varphi,\dot\varphi)\right)\bcdot\delta\varphi\\
& \qquad +\int_{\partial M}\left(\iota_{\partial M}^*\left(\tr\frac{\partial\mathscr L}{\partial\zeta}\right)+\mathcal F_\partial^\star(\varphi,\dot\varphi)\right)\bcdot \iota^*_{\partial M}\delta\varphi\bigg]dt+\left[\int_M\frac{\partial\mathscr L}{\partial\nu}\bcdot \delta\varphi\right]_{t=t_0}^{t=t_1},
\end{align*}
where we have used Proposition \ref{prop:covariantdivergence}. Since the variations $\delta\varphi$ are free and vanish at $t=t_0,t_1$, we get the desired Lagrange--d'Alembert equations. The proof of {\bf (ii)} is similar by using the Leibniz rule for the covariant exterior derivative.
\end{proof}

\subsubsection{The Lagrange--d'Alembert--Pontryagin principle}\label{paragr_LDAP}

As in Definition \ref{def:externalforces_VectBundleValuedkforms}, there exist two formulations of the Lagrange--d'Alembert--Pontryagin principle, each one corresponding to a choice of the restricted dual space.

\begin{proposition}\rm\label{HPPrinciple_OmegakME}
Associated to each choice of the restricted dual space of $V=\Omega^k(M,E)$, we have a corresponding Lagrange--d'Alembert--Pontryagin variational principle:
\begin{itemize}
    \item[(i)] For $V^{\star}=\Lambda_m^k(M,E^*)\times\Lambda_{m-1}^k(\partial M,E^*)$, a curve $(\varphi,\nu,\alpha,\alpha_\partial):[t_0,t_1]\to TV\oplus T^\star V$ is critical for the \emph{Lagrange--d'Alembert--Pontryagin action functional}, i.e.,  
    \begin{equation*}
    \begin{split}
    &\delta\int_{t_{0}}^{t_{1}}\bigl( L_{\nabla}(\varphi,\nu)+ \blangle (\alpha, \alpha_{\partial}),\dot\varphi-\nu\brangle_{\star}\bigr)\,dt+\int_{t_{0}}^{t_{1}} \blangle F^\star(\varphi,\dot{\varphi}), \delta{\varphi}\brangle_{\star} dt\\[2mm]
    &=\delta\int_{t_{0}}^{t_{1}}\left(\int_{M}\mathscr L(\varphi,\nu,\nd\varphi)
    +\int_{M} \alpha\bcdot(\dot\varphi-\nu)  + \int_{\partial M} \alpha_{\partial}\bcdot(\dot\varphi-\nu)\right) dt\\[2mm]
    &\hspace{4cm}
    +\int_{t_{0}}^{t_{1}}\left(\int_{M}\mathcal F^\star(\varphi,\dot{\varphi})\bcdot\delta{\varphi}+ \int_{\partial M}\mathcal F^\star_\partial(\varphi,\dot{\varphi})\bcdot \iota^*_{\partial M}\delta{\varphi}\right) dt=0
    \end{split}
    \end{equation*}
    for free variations $\delta\varphi,\delta \nu,\delta\alpha,\delta\alpha_ \partial$ with $\delta\varphi$ vanishing at $t=t_0,t_1$, if and only if it satisfies the \emph{Lagrange--d'Alembert--Pontryagin equations}, which are exactly the equations of motion given in \eqref{EqMotionForcedLagDirac_VectBundleValuedkforms_Star}.
    
    \item[(ii)] For $V^{\dagger}=\Omega^{m-k}(M,E^*)\times\Omega^{m-k-1}(\partial M,E^*)$, a curve $(\varphi,\nu,\alpha,\alpha_\partial):[t_0,t_1]\to TV\oplus T^\dagger V$ is critical for the \emph{Lagrange--d'Alembert--Pontryagin action functional}, i.e., 
    \begin{equation*}
    \begin{split}
    &\delta\int_{t_{0}}^{t_{1}}\bigl( L_{\nabla}(\varphi,\nu)+ \blangle (\alpha, \alpha_{\partial}),\dot\varphi-\nu\brangle_{\dagger}\bigr)\,dt+\int_{t_{0}}^{t_{1}} \blangle F^\dagger(\varphi,\dot{\varphi}), \delta{\varphi}\brangle_{\dagger} dt\\[2mm]
    &=\delta\int_{t_{0}}^{t_{1}}\left(\int_{M}\mathscr L(\varphi,\nu,\nd\varphi)
    +\int_{M} (\dot\varphi-\nu)\bwedge\alpha  + \int_{\partial M} (\dot\varphi-\nu)\bwedge\alpha_{\partial}\right) dt\\[2mm]
    &\hspace{4cm}
    +\int_{t_{0}}^{t_{1}}\left(\int_{M}\delta{\varphi}\bwedge\mathcal F^\dagger(\varphi,\dot{\varphi})+ \int_{\partial M}\iota^*_{\partial M}\delta{\varphi}\bwedge\mathcal F^\dagger_\partial(\varphi,\dot{\varphi})\right) dt=0
    \end{split}
    \end{equation*}
    for free variations $\delta\varphi,\delta \nu,\delta\alpha,\delta\alpha_\partial$ with $\delta\varphi$ vanishing at $t=t_0,t_1$, if and only if it satisfies the \emph{the Lagrange--d'Alembert--Pontryagin equations}, which are exactly the equations of motion given in \eqref{EqMotionForcedLagDirac_VectBundleValuedkforms_Dagger}.
\end{itemize}
\end{proposition}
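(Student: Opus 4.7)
The plan is to compute the variational derivatives of the action functional with respect to each of the independent curves $\varphi,\nu,\alpha,\alpha_\partial$, and then to identify the resulting stationarity conditions with the Lagrange--Dirac equations \eqref{EqMotionForcedLagDirac_VectBundleValuedkforms_Star} and \eqref{EqMotionForcedLagDirac_VectBundleValuedkforms_Dagger} respectively. Since $\delta\nu$, $\delta\alpha$ and $\delta\alpha_\partial$ are unconstrained and do not appear differentiated in time, the corresponding Euler--Lagrange conditions are purely algebraic; only the $\delta\varphi$ variation requires integration by parts, in time (using the vanishing of $\delta\varphi$ at $t=t_0,t_1$) and in space (using Proposition \ref{prop:covariantdivergence} in case (i), and the Leibniz rule for $\nd$ combined with Stokes' theorem in case (ii)).

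\medskip

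For case (i), I would first vary with respect to $\delta\alpha\in\Lambda_m^k(M,E^*)$ and $\delta\alpha_\partial\in\Lambda_{m-1}^k(\partial M,E^*)$ separately. These yield $\int_M\delta\alpha\bcdot(\dot\varphi-\nu)=0$ and $\int_{\partial M}\delta\alpha_\partial\bcdot\iota_{\partial M}^*(\dot\varphi-\nu)=0$, so by the non-degeneracy of the pairing $\blangle\cdot,\cdot\brangle_\star$ we recover $\dot\varphi=\nu$ (which automatically implies the boundary condition). Next, the variation with respect to $\delta\nu\in V$ gives
\[
\int_M\left(\frac{\partial\mathscr L}{\partial\nu}(\varphi,\nu,\nd\varphi)-\alpha\right)\bcdot\delta\nu-\int_{\partial M}\alpha_\partial\bcdot\iota_{\partial M}^*\delta\nu=0.
\]
Taking $\delta\nu$ with support in the interior yields $\alpha=\frac{\partial\mathscr L}{\partial\nu}(\varphi,\nu,\nd\varphi)$, and then letting the boundary trace of $\delta\nu$ vary freely yields the primary constraint $\alpha_\partial=0$. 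Finally the variation with respect to $\delta\varphi$, using integration by parts in time on the terms $\int_M\alpha\bcdot\delta\dot\varphi$ and $\int_{\partial M}\alpha_\partial\bcdot\iota_{\partial M}^*\delta\dot\varphi$ (with endpoint contributions killed by $\delta\varphi(t_0)=\delta\varphi(t_1)=0$), together with Proposition \ref{prop:covariantdivergence} applied to $\int_M\frac{\partial\mathscr L}{\partial\zeta}\bcdot\nd\delta\varphi$, produces an interior integrand and a boundary integrand which, by the freedom of $\delta\varphi$ in $V$ and the surjectivity of $\iota_{\partial M}^*:V\to\Omega^k(\partial M,E)$, must vanish separately. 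This gives exactly the remaining equations $\dot\alpha=\frac{\partial\mathscr L}{\partial\varphi}-(-1)^k\operatorname{div}^{\nabla^*}\frac{\partial\mathscr L}{\partial\zeta}+\mathcal F^\star$ and $\dot\alpha_\partial=\iota_{\partial M}^*(\tr\frac{\partial\mathscr L}{\partial\zeta})+\mathcal F^\star_\partial$ of \eqref{EqMotionForcedLagDirac_VectBundleValuedkforms_Star}. The converse is immediate by reading the chain of equalities backwards.

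\medskip

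For case (ii), the argument is completely analogous, replacing $\bcdot$ by $\bwedge$ throughout and using the fiber derivatives $\frac{\partialnew\mathscr L}{\partialnew\varphi},\frac{\partialnew\mathscr L}{\partialnew\nu},\frac{\partialnew\mathscr L}{\partialnew\zeta}$. The spatial integration by parts now proceeds through the Leibniz rule
\[
{\rm d}\left(\delta\varphi\bwedge\frac{\partialnew\mathscr L}{\partialnew\zeta}\right)=\nd\delta\varphi\bwedge\frac{\partialnew\mathscr L}{\partialnew\zeta}+(-1)^k\,\delta\varphi\bwedge\nd^*\frac{\partialnew\mathscr L}{\partialnew\zeta},
\]
followed by ordinary Stokes' theorem on $M$, producing the interior term $-(-1)^k\nd^*\frac{\partialnew\mathscr L}{\partialnew\zeta}$ and the boundary term $\iota_{\partial M}^*\frac{\partialnew\mathscr L}{\partialnew\zeta}$. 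Equivalence with case (i) is also guaranteed a posteriori by Lemma \ref{lemma:PhiEcontractions} and Proposition \ref{isomorphic_duals}.

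\medskip

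The only subtle point---and the one that justifies the whole design of the restricted dual---is the correct separation of the $\delta\varphi$ variation into an interior and a boundary part. One must be careful that the boundary integrand that emerges from the spatial integration by parts is paired with $\iota_{\partial M}^*\delta\varphi$, not with $\delta\varphi|_{\partial M}$ seen as a section of a bundle over $M$, and that the freedom of the traces $\iota_{\partial M}^*\delta\varphi$ together with the freedom of $\delta\varphi$ in the interior really does force the two resulting equations to hold independently. This is exactly what the injectivity of $\Psi_\star$ (resp.\ $\Psi_\dagger$) into $V'$ encodes, so no further technical machinery is needed.
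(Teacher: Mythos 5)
Your proof is correct and follows essentially the same route as the paper, which simply states that the result is obtained ``by direct computations\ldots as in the proof of Proposition~\ref{HamPrinLag_BundleValuedkForms}'': you carry out that computation explicitly, with the $\delta\alpha,\delta\alpha_\partial,\delta\nu$ variations giving the algebraic/constraint equations and the $\delta\varphi$ variation, via Proposition~\ref{prop:covariantdivergence} (resp.\ the Leibniz rule and Stokes' theorem), giving the remaining equations of \eqref{EqMotionForcedLagDirac_VectBundleValuedkforms_Star} and \eqref{EqMotionForcedLagDirac_VectBundleValuedkforms_Dagger}. All signs and boundary terms match the stated equations, so no changes are needed.
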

\begin{proof}
The results are obtained by direct computations for each choice of the restricted dual space, as in the proof of Proposition \ref{HamPrinLag_BundleValuedkForms}.
\end{proof}

\begin{remark}[Lagrange--Dirac dynamical systems on the family of standard forms]\rm
The Lagrange--Dirac theory for systems whose configuration space is the family of (standard) $k$-forms, $V=\Omega^k(M)$, may be recovered as a particular case of the results presented in this section. More specifically, the two choices for the restricted dual boil down to
\begin{equation*}
V^\star=\Lambda_m^k(M)\times\Lambda_{m-1}^k(\partial M)\qquad\text{and}\qquad V^\dagger=\Omega^{m-k}(M)\times\Omega^{m-k-1}(\partial M).
\end{equation*}
The corresponding dual pairings are given by
\begin{equation*}
\left\langle(\alpha,\alpha_\partial),\varphi\right\rangle_\star=\int_{ M}\alpha\cdot\varphi+\int_{\partial M}\alpha_\partial\cdot\iota_{\partial M}^*\varphi,\qquad(\alpha,\alpha_\partial)\in V^\star,~\varphi\in V,
\end{equation*}
and
\begin{equation*}
\left\langle(\alpha,\alpha_\partial),\varphi\right\rangle_\dagger=\int_{ M}\varphi\wedge\alpha+\int_{\partial M}\iota_{\partial M}^*\varphi\wedge\alpha_\partial,\qquad(\alpha,\alpha_\partial)\in V^\dagger,~\varphi\in V,
\end{equation*}
respectively. Note that the second choice corresponds with the dual pairing introduced in Part I of this paper, \cite{GBRAYo2025I}. Given that standard forms can be differentiated without the need of a linear connection, the exterior derivative and the divergence are canonical. Hence, the divergence theorem given in Proposition \ref{prop:covariantdivergence} reads
\begin{equation*}
{\rm div}(\chi\cdot\delta\varphi)=\chi\cdot{\rm d}\delta\varphi+(-1)^k\,{\rm div}(\chi)\cdot\delta\varphi,
\end{equation*}
for each $\varphi\in\Omega^k(M)$ and $\chi\in\Lambda_m^{k+1}(M)$. Lastly, the Lagrangian \eqref{eq:lagrangianvectorbundlekforms} does not depend on the choice of the linear connection, hence given as
\begin{equation*}
L:T\Omega^k(M)\to\mathbb R,\quad(\varphi,\nu)\mapsto L(\varphi,\nu)=\int_M\mathscr L(\varphi,\nu,{\rm d}\varphi).
\end{equation*}
\end{remark}

\subsection{Example: matter fields}\label{sec:particlefields}

Let us apply the theory introduced above to matter fields in the space+time decomposition. More specifically, we shall regard such fields as sections of a vector bundle over a spacetime $\overline{M}$ written as $\overline{M}= [t_0,t_1]\times M$.
In Sections \ref{sec:particlefieldsinteraction} and \ref{sec:YMHeqs} below, we will consider specific examples of these fields, such as the Higgs field or the Klein--Gordon field, and we will couple them with gauge fields, such as the Yang--Mills field.

\paragraph{Geometric setting} Let $(M,g)$ be a compact Riemannian manifold and $\pi_{E,M}:E\to M$ be a vector bundle endowed with a (not necessarily positive definite) bundle metric $\kappa:E\times_M E\to\mathbb R$. A fibered inner product on the space of $E$-valued $k$-forms on $M$,
\begin{equation*}
\mathbf g:\big(\textstyle\bigwedge^k T^*M\otimes E\big)\times_M\big(\textstyle\bigwedge^k T^*M\otimes E\big)\to\mathbb R,
\end{equation*}
may be defined as
\begin{equation*}
\mathbf g_x(\alpha_1\otimes\varphi_1,\alpha_2\otimes\varphi_2)=g_x(\alpha_1,\alpha_2)\,\kappa_x(\varphi_1,\varphi_2),
\end{equation*}
for each $x\in M$, $\alpha_i\in\bigwedge^k T_x^*M$ and $\varphi_i\in E_x$, $i=1,2$. 

The musical isomorphisms induced by $g$ are denoted by $\sharp:T^*M\to TM$ and $\flat:TM\to T^*M$. Similarly, the musical isomorphisms induced by $\kappa$ are denoted by $\sharp_\kappa:E^*\to E$ and $\flat_\kappa:E\to E^*$, where $\pi_{E^*,M}:E^*\to M$ is the dual bundle of $\pi_{E,M}$. In turn, the musical isomorphisms induced by $\mathbf g$ are given by
\begin{equation}\label{bold_sharp_flat}
\begin{aligned}
\boldsymbol{\sharp}:\textstyle\bigwedge^k T^*M\otimes E\to\bigwedge^k TM\otimes E^*,\qquad & \alpha\otimes\varphi\mapsto\alpha^\sharp\otimes\varphi^{\flat_\kappa},\\
\boldsymbol{\flat}:\textstyle\bigwedge^k TM\otimes E^*\to\bigwedge^k T^*M\otimes E,\qquad & U\otimes\eta\mapsto U^\flat\otimes\eta^{\sharp_\kappa}.
\end{aligned}
\end{equation}
Lastly, the Hodge star operator on $M$ is trivially extended to $E$-valued and $E^*$-valued forms. Namely, for each $\alpha\in\bigwedge^k T^*M$ and $\varphi\in E$, it is defined as $\boldsymbol\star(\alpha\otimes\varphi)=(\star\alpha)\otimes\varphi$, where $\star:\bigwedge^k T^*M\to\bigwedge^{m-k} T^*M$ denotes the standard Hodge star operator. By definition, it satisfies $\mathbf g(\sigma_1,\sigma_2)\mu_g=\sigma_1^{\flat_\kappa}\bwedge\boldsymbol\star\sigma_2$ for each $x\in M$ and $\sigma_1,\sigma_2\in\bigwedge^k T_x^*M\otimes E_x$, where $\mu_g\in\Omega^m(M)$ is the Riemannian volume form and we note $\sigma_1^{\flat_\kappa} \in \bigwedge^k T_x^*M\otimes E^*_x$. 

\begin{remark}\rm
The Riemannian metric $g$ induces a Riemannian metric on the boundary, $g_\partial=\iota_{\partial M}^*g$. The corresponding musical isomorphisms are denoted by $\sharp_\partial:T^*\partial M\to T\partial M$ and $\flat_\partial:T\partial M\to T^*\partial M$. Similarly, the Hodge star operator and
the Riemannian volume form on the boundary (cf. \cite[Corollary 15.34]{Le2012}) are denoted by $\star_\partial:\Omega^k(\partial M)\to\Omega^{m-k-1}(\partial M)$ and $\mu_g^\partial=\iota_{\partial M}^*(i_n\mu_g)\in\Omega^{m-1}( \partial M)$, respectively, where $n\in\mathfrak X(M)|_{\partial M}$ is the outward pointing, unit, normal vector field on $\partial M$, and $i_U:\Omega^k(M)\to\Omega^{k-1}(M)$ is the left interior multiplication by $U\in\mathfrak X(M)$. Hence, the previous constructions may be extended to forms on the boundary.
\end{remark}

Now consider the extended bundle $\overline\pi_{E,M}:\overline E\to\overline M$, where $\overline M=[t_0,t_1]\times M$ and $\overline E=[t_0,t_1]\times E$. By considering the Minkowski metric on $\overline M$, i.e., $\eta=-dt\otimes dt+g$ (where the necessary pullbacks are omitted for brevity), a fibered inner product on the space of $\overline E$-valued $k$-forms on $\overline M$,
\begin{equation*}
\boldsymbol\eta:\left(\textstyle\bigwedge^k T^*\overline M\otimes\overline E\right)\times_{\overline M}\left(\textstyle\bigwedge^k T^*\overline M\otimes\overline E\right)\to\mathbb R,
\end{equation*}
may be defined as above; namely,
\begin{equation*}
\boldsymbol\eta_{(t,x)}(\overline\alpha_1\otimes\overline\varphi_1,\overline\alpha_2\otimes\overline\varphi_2)=\eta_{(t,x)}(\overline\alpha_1,\overline\alpha_2)\,\kappa_x(\varphi_1,\varphi_2),
\end{equation*}
for each $(t,x)\in\overline M$, $\overline\alpha_i\in\bigwedge^k T_{(t,x)}^*\overline M$ and $\overline\varphi_i=(t,\varphi_i)\in\overline E_{(t,x)}=\{t\}\times E_x$, $i=1,2$.
\medskip

In particular, in the case $k=1$ we have the following two situations:
\begin{enumerate}
    \item If $\overline\alpha_1=dt$, then
    \begin{equation*}
    \boldsymbol\eta_{(t,x)}\left(\overline\alpha_1\otimes\overline\varphi_1,\overline\alpha_2\otimes\overline\varphi_2\right)=\underset{-1}{\underbrace{\eta_{(t,x)}(dt,dt)}}\,\kappa_x(\varphi_1,\varphi_2)=-\mathbf g_x\left(\varphi_1,\varphi_2\right).
    \end{equation*}
    \item If $\overline\alpha_i=\alpha_i\in T_x^*M$, $i=1,2$, then
    \begin{equation*}
    \boldsymbol\eta_{(t,x)}\left(\overline\alpha_1\otimes\overline\varphi_1,\overline\alpha_2\otimes\overline\varphi_2\right)=\underset{g_x(\alpha_1,\alpha_2)}{\underbrace{\eta_{(t,x)}(\alpha_1,\alpha_2)}}\,\kappa_x(\varphi_1,\varphi_2)=\mathbf g_x\left(\alpha_1\otimes\varphi_1,\alpha_2\otimes\varphi_2\right).
    \end{equation*}
\end{enumerate}

Given a linear connection  $\nabla$ on $\pi_{E,M}:E\to M$, define the linear connection $\overline\nabla$ induced on $\overline\pi_{E,M}:\overline E\to\overline M$ as
\begin{equation}\label{varphi_bar}
\overline\nabla\overline\varphi(t,x)=\left(t,\nabla\varphi(t,x)+\dot\varphi(t,x)\,dt\right),\quad\overline\varphi(t,x)=(t,\varphi(t,x))\in\Gamma\left(\overline E\to\overline M\right)=\Omega^0\left(\overline M,\overline E\right).
\end{equation}

\paragraph{Action functional and Lagrangian densities} The action functional of a \emph{matter field} takes the general form
\begin{equation}\label{eq:lagrangiansigma}
\mathcal A:\Omega^0\left(\overline M,\overline E\right)\to\mathbb R,\quad\overline\varphi\mapsto\mathcal A(\overline\varphi)=\int_{\overline M}
\mathfrak{L}\big(\overline\varphi,{\rm d}^{\overline\nabla}\overline\varphi\big),
\end{equation}  
where the spacetime Lagrangian density $\mathfrak{L}$ is a bundle map
\[
\mathfrak{L}: \textstyle\left(\bigwedge^0 T^*\overline{M}\otimes \overline{E}\right)\times_{\overline{M}}\left(\bigwedge^1T^*\overline{M}\otimes \overline{E}\right)\to\bigwedge^{m+1} T^*\overline{M}.
\]
If $\mathfrak{L}$ does not depend explicitly on time $t$, then it gives rise to a Lagrangian density of the form considered in \eqref{Lagrangian_density_bundle}, namely,
\begin{equation}\label{Lagrangian_particle}
\textstyle\mathscr \!\!\!\mathscr{L}:\left(\bigwedge^0 T^*M\otimes E\right)\times_M\left(\bigwedge^0 T^*M\otimes E\right)\times_M\left(\bigwedge^{1}T^*M\otimes E\right)\to\bigwedge^m T^*M,
\end{equation}
via the relation
\[
\mathfrak{L\big(\overline\varphi,{\rm d}^{\overline\nabla}\overline\varphi\big)}= \mathscr{L}(\varphi, \dot\varphi, {\rm d}^{\nabla}\varphi)\,dt.
\]
This is shown by defining $\mathscr{L}$ in terms of $\mathfrak{L}$ as
\[
\mathscr{L}(\varphi, \dot\varphi, {\rm d}^{\nabla}\varphi)=i_{\partial_t}\left(\mathfrak{L\big(\overline\varphi,{\rm d}^{\overline\nabla}\overline\varphi\big)}\right)
\]
and using \eqref{varphi_bar}.
Thanks to this, the action functional \eqref{eq:lagrangiansigma} can be written in the form considered earlier in terms of $L_\nabla$ and $\mathscr{L}$ as
\begin{equation}\label{3_1}
\mathcal A(\overline\varphi)=\int_{t_0}^{t_1}L_{\nabla}(\varphi, \dot \varphi) dt=\int_{t_0}^{t_1}\int _M \mathscr{L}(\varphi, \dot\varphi, {\rm d}^\nabla\varphi) dt.
\end{equation} 

For concreteness, let us consider the spacetime Lagrangian density
\begin{equation}\label{concreteness}
\mathfrak{L}(\overline{\varphi}, {\rm d}^{\overline{\nabla}}\overline{\varphi})=-\Big(\frac{1}{2}\boldsymbol\eta\left({\rm d}^{\overline\nabla}\overline\varphi,{\rm d}^{\overline\nabla}\overline\varphi\big)+\mathbf V(\overline\varphi)\right)dt\wedge\mu_g,
\end{equation} 
with $\mathbf V:\overline E\to\mathbb R$  a potential function assumed to be time independent, i.e., $\mathbf V(t,\varphi_x)=\mathbf V(\varphi_x)$ for each $(t,x)\in\overline M$ and $\varphi_x\in E_x$. From the definitions of $\boldsymbol\eta$ and $\overline\nabla$ given above, it is easy to check that
\begin{equation*}
\boldsymbol\eta\big({\rm d}^{\overline\nabla}\overline\varphi,{\rm d}^{\overline\nabla}\overline\varphi\big)=-\mathbf g(\dot\varphi, \dot  \varphi )+\mathbf g({\rm d}^\nabla\varphi,{\rm d}^\nabla\varphi).
\end{equation*}
Note that since $\overline{\varphi}$ and $\varphi$ are $0$-forms, we have ${\rm d}^{\overline{\nabla}}\overline{\varphi}=\overline\nabla\overline{\varphi}$ and ${\rm d}^\nabla\varphi=\nabla\varphi$, as given in \eqref{varphi_bar}. We shall however keep the exterior differential notation below.

Hence the Lagrangian density \eqref{Lagrangian_particle} reads
\begin{align}\label{eq:lagrangiandensitysigma}
\mathscr L(\varphi_x,\nu_x,\zeta_x) & =\left(\frac{1}{2}\mathbf g(\nu_x,\nu_x)-\frac{1}{2}\mathbf g(\zeta_x,\zeta_x)-\mathbf V(\varphi_x)\right)\mu_g\\\nonumber
& =\frac{1}{2}\left(\nu_x^{\flat_\kappa}\bwedge\boldsymbol\star\nu_x-\zeta_x^{\flat_\kappa}\bwedge\boldsymbol\star\zeta_x\right)-\star\mathbf V(\varphi_x),
\end{align}
for each $(\varphi_x,\nu_x,\zeta_x)\in E_x\times E_x\times T_x^*M\otimes E_x$ and $x\in M$.

\paragraph{Equations and boundary conditions} The equations and boundary conditions in intrinsic form are given in \eqref{EqMotionForcedLagDirac_VectBundleValuedkforms_Star} and \eqref{EqMotionForcedLagDirac_VectBundleValuedkforms_Dagger}, with $k=0$. Their local expression, as stated in Remark \ref{remark:localexpression}, simplify considerably in the case $k=0$:
\begin{equation}\label{EqMotionForcedLagDirac_VectBundleValuedkforms_k1}
\left\{
\begin{array}{ll}
\dot\varphi^a=\nu^a,&\vspace{4mm}\\
\displaystyle\alpha_a=\frac{\partial\mathscr L}{\partial\nu^a},\; & \displaystyle\dot\alpha_a=\frac{\partial\mathscr L}{\partial\varphi^a}-\Big(\partial_j \frac{\partial\mathscr L}{\partial\zeta^a_j} - \Gamma^b_{j,a} \frac{\partial\mathscr L}{\partial\zeta^b_j} \Big) +\mathcal F_a,\vspace{2mm}\\
(\alpha_\partial)_a=0, & \displaystyle(\dot\alpha_\partial)_a=\left.\frac{\partial\mathscr L}{\partial\zeta_m^a}\right|_{x^m=0}+(\mathcal F_\partial)_a.
\end{array}\right.
\end{equation}

Let us focus on the Lagrangian density given in \eqref{eq:lagrangiandensitysigma}. The partial derivative of the potential is defined similarly to the functional derivatives of the Lagrangian density. More specifically, it is the bundle morphism $\partial\mathbf V/\partial\varphi:E\to E^*$ given by
\begin{equation*}
\frac{\partial\mathbf V}{\partial\varphi}(\varphi_x)\cdot\delta\varphi_x=\left.\frac{d}{d\epsilon}\right|_{\epsilon=0}\mathbf V(\varphi_x+\epsilon\delta\varphi_x),\qquad\varphi_x,\delta\varphi_x\in E_x,~x\in M.
\end{equation*}
Moreover, we denote $\operatorname{grad}_\kappa\mathbf V=\sharp_\kappa\circ\partial\mathbf V/\partial\varphi:E\to E$. The next result follows from a straightforward computation.

\begin{lemma}\rm
Let $V=\Omega^0(M,E)$ and $(\varphi,\dot\varphi)\in TV$. By denoting $\zeta={\rm d}^\nabla\varphi\in\Omega^1(M,E)$, the partial derivatives of the Lagrangian density \eqref{eq:lagrangiandensitysigma} evaluated at $(\varphi,\dot\varphi,\zeta)$ are given by
\begin{align*}
& \frac{\partial\mathscr L}{\partial\varphi}=-\frac{\partial\mathbf V}{\partial\varphi}\otimes\mu_g, && \frac{\partial\mathscr L}{\partial\nu}=\dot\varphi^{\boldsymbol\sharp}\otimes\mu_g, && \frac{\partial\mathscr L}{\partial\zeta}=-\left({\rm d}^\nabla\varphi\right)^{\boldsymbol\sharp}\otimes\mu_g,\\
& \frac{\partialnew\mathscr L}{\partialnew\varphi}=-\boldsymbol\star\frac{\partial\mathbf V}{\partial\varphi}, && \frac{\partialnew\mathscr L}{\partialnew\nu}=\boldsymbol\star\dot\varphi^{\flat_\kappa}, && \frac{\partialnew\mathscr L}{\partialnew\zeta}=-\boldsymbol\star\left({\rm d}^\nabla\varphi\right)^{\flat_\kappa}.
\end{align*}
\end{lemma}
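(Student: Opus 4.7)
The plan is to verify each of the six formulas directly from the definitions of the partial derivatives, exploiting the two equivalent presentations of the Lagrangian density in \eqref{eq:lagrangiandensitysigma}. I would organize the argument into two parallel blocks, one for the $\partial/\partial$--family and one for the $\partialnew/\partialnew$--family, since each pairing singles out one of the two expressions of $\mathscr L$ as the convenient one to work with.

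First I would compute the $\partial/\partial$--derivatives using the expression $\mathscr L = \bigl(\tfrac12\mathbf g(\nu,\nu)-\tfrac12\mathbf g(\zeta,\zeta)-\mathbf V(\varphi)\bigr)\mu_g$. The $\varphi$--derivative is immediate, since only the potential term depends on $\varphi$, and the definition of $\partial\mathbf V/\partial\varphi$ yields $\partial\mathscr L/\partial\varphi\bcdot\delta\varphi=-(\partial\mathbf V/\partial\varphi\cdot\delta\varphi)\mu_g$, whence $\partial\mathscr L/\partial\varphi=-\partial\mathbf V/\partial\varphi\otimes\mu_g$. For $\nu$ and $\zeta$, bilinearity of $\mathbf g$ gives $\tfrac{d}{d\epsilon}|_{\epsilon=0}\tfrac12\mathbf g(\nu+\epsilon\,\delta\nu,\nu+\epsilon\,\delta\nu)=\mathbf g(\nu,\delta\nu)$, and I would recognize the right-hand side as $\nu^{\boldsymbol\sharp}\bcdot\delta\nu$ using the definition \eqref{bold_sharp_flat} of $\boldsymbol\sharp$ together with the fact that $(\alpha\otimes\varphi)^{\boldsymbol\sharp}\bcdot(\beta\otimes\psi)=g(\alpha,\beta)\kappa(\varphi,\psi)$. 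This produces $\partial\mathscr L/\partial\nu=\dot\varphi^{\boldsymbol\sharp}\otimes\mu_g$ and, with the opposite sign, $\partial\mathscr L/\partial\zeta=-({\rm d}^\nabla\varphi)^{\boldsymbol\sharp}\otimes\mu_g$.

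For the $\partialnew/\partialnew$--family I would switch to the second expression $\mathscr L=\tfrac12(\nu^{\flat_\kappa}\bwedge\boldsymbol\star\nu-\zeta^{\flat_\kappa}\bwedge\boldsymbol\star\zeta)-\star\mathbf V(\varphi)$ and differentiate directly under the bilinear pairing $\delta\zeta\bwedge\partialnew\mathscr L/\partialnew\zeta$. The key elementary identity I would use is that the pairing is symmetric in the following sense: $\sigma_1^{\flat_\kappa}\bwedge\boldsymbol\star\sigma_2=\sigma_2\bwedge\boldsymbol\star\sigma_1^{\flat_\kappa}$ for any pair of $E$-valued $k$-forms, which follows immediately from the symmetry $\alpha\wedge\star\beta=\beta\wedge\star\alpha$ of the Hodge pairing together with the symmetry of $\kappa$. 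Applying this identity, the two terms produced by differentiating the quadratic expression in $\nu$ coincide and one gets $\partialnew\mathscr L/\partialnew\nu=\boldsymbol\star\dot\varphi^{\flat_\kappa}$; similarly for $\zeta$ with the opposite sign. For the $\varphi$--derivative, I would note that $\star\mathbf V(\varphi)$ is just $\mathbf V(\varphi)\,\mu_g$ since $\mathbf V(\varphi)$ is a $0$-form, so that $\partialnew\mathscr L/\partialnew\varphi=-\boldsymbol\star(\partial\mathbf V/\partial\varphi)$ follows at once.

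I do not anticipate any real obstacle; the calculation is essentially mechanical. The only points requiring care are to distinguish between the musical isomorphisms $\sharp,\flat$ acting on $T^*M,TM$ and $\sharp_\kappa,\flat_\kappa$ acting on $E,E^*$ when unpacking $\boldsymbol\sharp$, and to keep straight that $\boldsymbol\star$ acts only on the form factor. As a consistency check, one can verify after the fact that the two families are indeed related by $\partialnew\mathscr L/\partialnew(\cdot)=\Phi_E\circ\partial\mathscr L/\partial(\cdot)$, as recorded just before Lemma \ref{lemma:partialderivativevectorbundlekforms}; this provides a second, shorter derivation of the $\partialnew$--formulas from the $\partial$--formulas.
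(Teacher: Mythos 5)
Your proposal is correct: the paper gives no proof of this lemma (it is stated as following ``from a straightforward computation''), and your argument supplies exactly the intended direct verification from the definitions of the two families of fiber derivatives, using the identity $\mathbf g(\sigma_1,\sigma_2)\mu_g=\sigma_1^{\flat_\kappa}\bwedge\boldsymbol\star\sigma_2$ recorded in the paper and the symmetry of that pairing. The closing consistency check via $\Phi_E$ is also sound and matches the relation $\frac{\partialnew\mathscr L}{\partialnew(\cdot)}=\Phi_E\circ\frac{\partial\mathscr L}{\partial(\cdot)}$ stated before Lemma \ref{lemma:partialderivativevectorbundlekforms}.
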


Let us consider external body and boundary forces $\mathcal F^\star:TV\to\Lambda_m^0(M,E^*)=\Omega^m(M,E^*)$ and $\mathcal F_\partial^\star:TV\to\Lambda_{m-1}^0(\partial M,E^*)=\Omega^{m-1}(\partial M,E^*)$. The forced Lagrange--Dirac equations on $TV\oplus T^\star V$ \eqref{EqMotionForcedLagDirac_VectBundleValuedkforms_Star} for the matter field Lagrangian density \eqref{eq:lagrangiandensitysigma} read
\begin{equation*}
\left\{\begin{array}{ll}
\dot\varphi=\nu,&\vspace{4mm}\\
\displaystyle\alpha=\dot\varphi^{\boldsymbol\sharp}\otimes\mu_g,\; & \displaystyle\dot\alpha=-\frac{\partial\mathbf V}{\partial\varphi}\otimes\mu_g+\operatorname{div}^{ \nabla ^*} \left(\left({\rm d}^\nabla\varphi\right)^{\boldsymbol\sharp}\otimes\mu_g\right)+\mathcal F^{\star},\vspace{2mm}\\
\alpha_\partial=0, & \displaystyle\dot\alpha_\partial=-\iota_{\partial M}^*\left(\tr\left({\rm d}^\nabla\varphi\right)^{\boldsymbol\sharp}\otimes\mu_g\right)+\mathcal F^{\star}_{\partial}.
\end{array}\right.
\end{equation*}
By eliminating the variables $\nu$, $\alpha$ and $\alpha_\partial$, we obtain the following system:
\begin{equation*}
\left\{
\begin{array}{l}
\displaystyle\vspace{0.2cm}\ddot\varphi^{\boldsymbol\sharp} \otimes \mu _g - \operatorname{div}^{\nabla^*}\left(\left({\rm d}^\nabla\varphi\right)^{\boldsymbol\sharp} \otimes \mu _g\right)+ \frac{\partial\mathbf V}{\partial\varphi}\otimes\mu_g=\mathcal{F} ^\star ,\\
\displaystyle \iota_{ \partial M} ^*\left(\operatorname{tr}\left({\rm d}^\nabla\varphi\right)^{\boldsymbol\sharp}\otimes\mu _g\right)=\mathcal{F} _ \partial ^\star.
\end{array}
\right.
\end{equation*}
Analogously, let $\mathcal F^\dagger:TV\to\Omega^m(M,E^*)$ and $\mathcal F_\partial^\dagger:TV\to\Omega^{m-1}(\partial M,E^*)$ be external body and boundary forces. The forced Lagrange--Dirac equations on $TV\oplus T^\dagger V$ \eqref{EqMotionForcedLagDirac_VectBundleValuedkforms_Dagger} for the matter field Lagrangian density \eqref{eq:lagrangiandensitysigma} read
\begin{equation*}
\left\{\begin{array}{ll}
\dot\varphi=\nu,&\vspace{0.4cm}\\
\displaystyle\alpha=\boldsymbol\star\dot\varphi^{\flat_\kappa},\; & \displaystyle\dot\alpha=-\boldsymbol\star\frac{\partial\mathbf V}{\partial\varphi}+\nd^* \left(\boldsymbol\star\left({\rm d}^\nabla\varphi\right)^{\flat_\kappa}\right)+\mathcal F^\dagger,\vspace{0.2cm}\\
\alpha_\partial=0, & \displaystyle\dot\alpha_\partial=-\iota_{\partial M}^*\left(\boldsymbol\star\left({\rm d}^\nabla\varphi\right)^{\flat_\kappa}\right)+\mathcal F^\dagger_{\partial}.
\end{array}\right.
\end{equation*}
By eliminating the variables $\nu$, $\alpha$ and $\alpha_\partial$, we obtain the following system:
\begin{equation}\label{eq:harmonic}
\left\{
\begin{array}{l}
\displaystyle\vspace{0.2cm}\boldsymbol\star\ddot\varphi^{\flat_\kappa}-\nd^*\left(\boldsymbol\star\left({\rm d}^\nabla\varphi\right)^{\flat_\kappa}\right) + \boldsymbol\star\frac{\partial\mathbf V}{\partial\varphi}=\mathcal{F} ^\dagger,\\
\displaystyle \iota_{ \partial M} ^*\left(\boldsymbol\star\left({\rm d}^\nabla\varphi\right)^{\flat_\kappa}\right)=\mathcal{F} _ \partial ^\dagger.
\end{array}
\right.
\end{equation}

In the case that $\nabla$ is a metric connection with respect to $\kappa$, i.e., ${\rm d}\kappa(\varphi_1,\varphi_2)=\kappa(\nabla\varphi_1,\varphi_2)+\kappa(\varphi_1,\nabla\varphi_2)$ for each $\varphi_1,\varphi_1\in V$, we have $\flat_\kappa\circ{\rm d}^\nabla=\nd^*\circ\flat_\kappa$. In addition, we denote the external currents by $\mathcal F^\dagger=\boldsymbol\star\beth^{\flat_\kappa}$ and $\mathcal F_\partial^\dagger=\boldsymbol\star_\partial\gimel^{\flat_\kappa}$ for some $\beth:TV\to\Omega^0(M,E)$ and $\gimel:TV\to\Omega^0(\partial M,E)$. By using this, the system \eqref{eq:harmonic} reads
\begin{equation}\label{eq:particlefield}
\left\{
\begin{array}{l}
\displaystyle\vspace{0.2cm}\ddot\varphi+\delta^\nabla\left({\rm d}^\nabla\varphi\right)+\operatorname{grad}_\kappa\mathbf V=\beth,\\
\displaystyle \boldsymbol\star_\partial\left(\iota_{ \partial M} ^*\left(\boldsymbol\star{\rm d}^\nabla\varphi\right)\right)=\gimel,
\end{array}
\right.
\end{equation}
where $\delta^\nabla=-\boldsymbol{\star}\circ\nd\circ\boldsymbol\star:\Omega^1(M,E)\to\Omega^0(M,E)$ is the codifferential of $\nd$.

\begin{remark}\rm In local coordinates, the Riemannian metric on $M$ and the bundle metric on $\pi_{E,M}:E\to M$ are given by $g=g_{ij}\,dx^i\otimes dx^j$ and $\kappa=\kappa_{ab}\,B^a\otimes B^b$, respectively, for some $g_{ij},\kappa_{ab}\in C^\infty(M)$, $1\leq i,j\leq m$, $1\leq a,b\leq n$. Let us denote by $(g^{ij})_{1\leq i,j\leq m}$ and $(\kappa^{ab})_{1\leq a,b\leq n}$ the inverse matrices of $(g_{ij})_{1\leq i,j\leq m}$ and $(\kappa_{ab})_{1\leq a,b\leq n}$, respectively. For simplicity, suppose that $\nabla$ is locally the trivial connection, i.e., $\Gamma_{i,b}^a=0$ for each $1\leq i\leq m$ and $1\leq a,b\leq n$. Hence, \eqref{eq:particlefield} is locally given by:
\begin{equation*}
\left\{\begin{array}{l}
\displaystyle\ddot\varphi^a-\partial_j\left(g^{ij}\,\partial_i\varphi^a\right)+\kappa^{ab}\frac{\partial\mathbf V}{\partial\varphi^b}=\beth^a,\\
\displaystyle \left.g^{im}\,\partial_i\varphi^a\right|_{x^m=0}=\gimel^a,
\end{array}\right.
\end{equation*}
where we have denoted $\beth=\beth^a\,B_a$ and $\gimel=\gimel^a\,B_a$, for some $\beth^a\in C^\infty(M)$ and $\gimel^a\in C^\infty(\partial M)$, $1\leq a\leq n$.
\end{remark}

\paragraph{Energy considerations} The energy density is found as
\begin{align*}
\mathscr E& =\left(\frac{1}{2}\mathbf g(\dot\varphi,\dot\varphi)+\frac{1}{2}\mathbf g({\rm d}^\nabla\varphi,{\rm d}^\nabla\varphi)+\mathbf V(\varphi)\right)\mu_g\\
& =\frac{1}{2}\left(\dot\varphi^{\flat_\kappa}\bwedge\boldsymbol\star\dot\varphi+({\rm d}^\nabla\varphi)^{\flat_\kappa}\bwedge\boldsymbol\star{\rm d}^\nabla\varphi\right)+\mathbf V(\varphi)\mu_g.
\end{align*}

From Proposition \ref{prop:energybalancevectorbundle}, we obtain the local and global energy balances. For $V^\star$, they read
\begin{align*}
\frac{\partial\mathscr E}{\partial t}  =-{\rm div}\,S+\mathcal F^\star\bcdot\dot\varphi,
\end{align*}
and
\begin{align*}
\frac{d}{dt} \int_M \mathscr{E}  =\underbrace{\int_M\,\mathcal F^\star\bcdot\dot\varphi}_{\text{spatially distributed contribution}} +\underbrace{\int_{\partial M}\mathcal F_\partial^\star\bcdot\iota_{\partial M}^*\dot\varphi}_{\text{boundary contribution}},
\end{align*}
where $S=-\dot\varphi\bcdot\left({\rm d}^\nabla\varphi\right)^{\boldsymbol\sharp}\otimes\mu_g\in\Lambda_m^1(M)$. Analogously, for $V^\dagger$, the balances read
\begin{align*}
\frac{\partial\mathscr E}{\partial t}  = -{\rm d}\mathsf S+\dot\varphi\bwedge\boldsymbol\star\beth^{\flat_K},
\end{align*}
and
\begin{align*}
\frac{d}{dt} \int_M \mathscr{E} =\underbrace{\int_M\,\dot\varphi\bwedge\boldsymbol\star\beth^{\flat_\kappa}}_{\text{spatially distributed contribution}} +\underbrace{\int_{\partial M}\big(\iota_{\partial M}^*\dot\varphi\big)\bwedge\boldsymbol\star_\partial\gimel^{\flat_\kappa}}_{\text{boundary contribution}},
\end{align*}
where $\mathsf S=-\dot\varphi\bwedge\boldsymbol\star\left({\rm d}^\nabla\varphi\right)^{\flat_\kappa}\in\Omega^{m-1}(M)$.

\section{Non-Abelian gauge theories: Yang--Mills equations}\label{sec:yangmills}

The Lagrange--Dirac theory introduced in the previous section for systems on the family of vector bundle-valued forms may be slightly modified to deal with non-Abelian gauge theories, such as the Yang--Mills theory. In addition, a further extension is presented to account for interaction between gauge fields and matter fields.

\subsection{Geometric setting}\label{sec:geometricsetting_gauge}

Let $G$ be a Lie group, $M$ be a compact manifold, $\pi_{P,M}:P\to M$ be a principal $G$-bundle, and $\tilde{\mathfrak g}=(P\times\mathfrak g)/G\to M$ be the adjoint bundle, where the action of $G$ on its Lie algebra $\mathfrak g$ is given by the adjoint representation. The corresponding bundle of connections is denoted by $\operatorname{Conn}(P)=\left(J^1P\right)/G\to  M$, where $J^1P$ is the first jet bundle of $\pi_{P,M}:P\to M$.

The configuration space for non-Abelian gauge theories is the family of principal connections on $\pi_{P,M}:P\to M$,
\begin{equation*}
\mathcal C(P)=\Gamma(\operatorname{Conn}(P)\to M).    
\end{equation*}
Recall that it is an affine bundle modeled on $T^*M\otimes\tilde{\mathfrak g}\to M$ and there is a bijective correspondence between principal connections on $\pi_{P,M}:P\to M$ and sections of the bundle of connections, which we denote by
\begin{equation*}
\mathcal C(P)\ni A\overset{1:1}{\longleftrightarrow}\mathbf A\in\Omega^1(P,\mathfrak g)\text{ principal connection}.
\end{equation*}

Moreover, since $\mathcal C(P)$ is an affine space modeled on $\Omega^1(M,\tilde{\mathfrak g})$, there is a canonical isomorphism
\begin{equation}\label{eq:trivialization}
T\mathcal C(P)\simeq\mathcal C(P)\times\Omega^1(M,\tilde{\mathfrak g}),\qquad v_A\mapsto\big(A,\dot A\big),
\end{equation}
where $\dot A=\dot{\overline\gamma}(0)$, for $\gamma:(-\epsilon,\epsilon)\to\mathcal C(P)$ a curve such that $\gamma(0)=A$ and $\dot\gamma(0)=v_A$, and $\overline\gamma:(-\epsilon,\epsilon)\to\Omega^1(M,\tilde{\mathfrak g})$ defined as $\overline\gamma(t)=\gamma(t)-A$ for each $t\in(-\epsilon,\epsilon)$. The restricted cotangent bundles are thus defined as
\begin{enumerate}
    \item $T^\star\mathcal C(P)=\mathcal C(P)\times\Lambda_m^1(M,\tilde{\mathfrak g}^*)\times\Lambda_{m-1}^1(\partial M,\tilde{\mathfrak g}^*)$ with the pairing given by
    \begin{equation*}
    \blangle(\varsigma,\varsigma_\partial),\dot A\brangle_\star=\int_M\varsigma\bcdot\dot A+\int_{\partial M}\varsigma_\partial\bcdot\iota_{\partial M}^*\dot A,
    \end{equation*}
    for each $(\varsigma,\varsigma_\partial)\in \Lambda_m^1(M,\tilde{\mathfrak g}^*)\times\Lambda_{m-1}^1(\partial M,\tilde{\mathfrak g}^*)$ and $\dot A\in\Omega^1(M,\tilde{\mathfrak g})$.
    \item $T^\dagger\mathcal C(P)=\mathcal C(P)\times\Omega^{m-1}(M,\tilde{\mathfrak g}^*)\times\Omega^{m-2}(\partial M,\tilde{\mathfrak g}^*)$ with the duality pairing given by
    \begin{equation*}
    \blangle(\varsigma,\varsigma_\partial),\dot A\brangle_\dagger=\int_M\dot A\bwedge\varsigma+\int_{\partial M}\iota_{\partial M}^*\dot A\bwedge\varsigma_\partial,
    \end{equation*}
    for each $(\varsigma,\varsigma_\partial)\in\Omega^{m-1}(M,\tilde{\mathfrak g}^*)\times\Omega^{m-2}(\partial M,\tilde{\mathfrak g}^*)$ and $\dot A\in\Omega^1(M,\tilde{\mathfrak g})$.
\end{enumerate}

Therefore, the restricted iterated bundles, as well as the canonical forms, the Tulczyjew triples and the canonical Dirac structures, may be defined as for the case of vector bundle-valued forms.

The horizontal exterior derivative induced by a principal connection $\mathbf A\in\Omega^1(P,\mathfrak g)$ is denoted by ${\rm d}^{\mathbf A}:\Omega^k(P,\mathfrak g)\to\Omega^{k+1}(P,\mathfrak g)$. In particular, for basic 1-forms of the adjoint type, it reads (cf. \cite[\S 2.3]{GBRa2008})
\begin{equation}\label{eq:horizontalderivativebasic1forms}
{\rm d}^{\mathbf A}\eta={\rm d}\eta+[\mathbf A,\eta]+[\eta,\mathbf A],\qquad\eta\in\overline \Omega^1(P,\mathfrak g).
\end{equation}
Analogously, when regarded as a section $A\in\mathcal C(P)$, the principal connection induces a linear connection on the adjoint bundle as well as an exterior derivative on the space of $\tilde{\mathfrak g}$-valued forms on $M$, which we denote by $\nabla^A$ and ${\rm d}^A:\Omega^k(M,\tilde{\mathfrak g})\to\Omega^{k+1}(M,\tilde{\mathfrak g})$, respectively. In the same vein, the dual connection of $\nabla^A$ and the corresponding covariant exterior derivative are denoted by $\nabla^{A*}$ and ${\rm d}^{A*}:\Omega^k(M,\tilde{\mathfrak g}^*)\to\Omega^{k+1}(M,\tilde{\mathfrak g}^*)$, respectively. Moreover, the divergence of $\nabla^{A*}$ is denoted by $\operatorname{div}^{A*}:\Lambda_m^{k+1}(M,\tilde{\mathfrak g}^*)\to\Lambda_m^k(M,\tilde{\mathfrak g}^*)$.

The curvature of the principal connection $\mathbf A\in\Omega^1(P,\mathfrak g)$ is given by $B_{\mathbf A}={\rm d}^{\mathbf A}\mathbf A\in\overline\Omega^2(P,\mathfrak g)$, where the bar denotes that it is a basic form of the adjoint type. Hence, it may be dropped to a $\tilde{\mathfrak g}$-valued form on the base space, $B_A\in\Omega^2(M,\tilde{\mathfrak g})$ which is known as the \emph{reduced curvature} of $A$. 

\subsection{Non-Abelian gauge fields}\label{sec:nonbabelianfields}

As we shall see later, in the space+time decomposition and after choosing the temporal gauge, the action functional for a non-Abelian gauge theory can be expressed in terms of a Lagrangian density given by a bundle morphism
\begin{equation}\label{eq:lagrangiandensitynonabelian}
\mathscr L_{\rm gau}:\operatorname{Conn}(P)\times_M(T^*M\otimes\tilde{\mathfrak g})\times_M\big(\textstyle\bigwedge^2 T^*M\otimes\tilde{\mathfrak g}\big)\to\textstyle\bigwedge^m T^*M
\end{equation}
covering the identity $\operatorname{id}_M$. The fiber derivative with respect to the first argument,
\begin{equation*}
\frac{\partial\mathscr L_{\rm gau}}{\partial A}:\operatorname{Conn}(P)\times_M(T^*M\otimes\tilde{\mathfrak g})\times_M\big(\textstyle\bigwedge^2 T^*M\otimes\tilde{\mathfrak g}\big)\to TM\otimes\textstyle\bigwedge^m T^*M\otimes\tilde{\mathfrak g}^*,
\end{equation*}
is defined as
\begin{equation*}
\frac{\partial\mathscr L_{\rm gau}}{\partial A}(A_x,\varepsilon_x,\beta_x)\bcdot\delta A_x=\left.\frac{d}{d\epsilon}\right|_{ \epsilon=0}\mathscr L_{\rm gau}(A_x+\epsilon\delta A_x,\varepsilon_x,\beta_x),\qquad\delta A_x\in T_x^*M\otimes\tilde{\mathfrak g}_x,
\end{equation*}
for each $x\in M$ and $(A_x,\varepsilon_x,\beta_x)\in\operatorname{Conn}(P)_x\times\big(T^*M\otimes\tilde{\mathfrak g}\big)_x\times\big(\bigwedge^{2}T^*M\otimes\tilde{\mathfrak g}\big)_x$. The fiber derivatives $\partial\mathscr L_{\rm gau}/\partial\varepsilon $ and $\partial\mathscr L_{\rm gau}/\partial\beta$ are defined analogously. As for the case of vector bundle-valued forms, when working with $T^\dagger\mathcal C(P)$ as the restricted cotangent bundle, we utilize
\begin{equation*}
\frac{\partialnew\mathscr L}{\partialnew A}:{\rm Conn}(P)\times_M(T^*M\otimes\tilde{\mathfrak g})\times_M\big(\textstyle\bigwedge^2 T^*M\otimes\tilde{\mathfrak g}\big)\to\textstyle\bigwedge^{m-1}T^*M\otimes\tilde{\mathfrak g}^*,
\end{equation*}
which is defined as
\begin{equation*}
\delta A_x\bwedge\frac{\partialnew\mathscr L_{\rm gau}}{\partialnew A}(A_x,\varepsilon_x,\beta_x)=\left.\frac{d}{d\epsilon}\right|_{ \epsilon=0}\mathscr L_{\rm gau}(A_x+\epsilon\delta A_x,\varepsilon_x,\beta_x),\qquad\delta A_x\in T_x^*M\otimes\tilde{\mathfrak g}_x.
\end{equation*}
The fiber derivatives $\partialnew\mathscr L_{\rm gau}/\partialnew\varepsilon $ and $\partialnew\mathscr L_{\rm gau}/\partialnew\beta$ are defined analogously. The relation between the two realisations is given by the map $\Phi_E$ introduced in \eqref{eq:PhiE}.

The corresponding Lagrangian is the map
\begin{equation}\label{eq:lagrangiannonabelian}
L_{\rm gau}:T\mathcal C(P)\simeq\mathcal C(P)\times\Omega^1(M,\tilde{\mathfrak g})\to\mathbb R,\quad(A,\varepsilon)\mapsto\int_M\mathscr L_{\rm gau}\left(A,\varepsilon,B_A\right).
\end{equation}

The partial functional derivatives 
\begin{equation*}
\frac{\delta L_{\rm gau}}{\delta A},~\frac{\delta L_{\rm gau}}{\delta\varepsilon}:T\mathcal C(P)\to\Omega^1(M,\tilde{\mathfrak g})',
\end{equation*}
of $L_{\rm gau}$ are defined as
\begin{equation*}
\frac{\delta L_{\rm gau}}{\delta A}(A,\varepsilon)(\delta A)=\left.\frac{d}{d\epsilon}\right|_{\epsilon=0}L_{\rm gau}(A+\epsilon\,\delta A,\varepsilon),\quad\frac{\delta L_{\rm gau}}{\delta\varepsilon}(A,\varepsilon)(\delta\varepsilon)=\left.\frac{d}{d\epsilon}\right|_{\epsilon=0}L_{\rm gau}(A,\varepsilon+\epsilon\,\delta\varepsilon),
\end{equation*}
for each $(A,\varepsilon)\in T\mathcal C(P)$ and $\delta A,\delta\varepsilon\in\Omega^1(M,\tilde{\mathfrak g})$.

\begin{lemma}\label{lemma:partialderivativegauge}\rm
For each $(A,\varepsilon)\in T\mathcal C(P)\simeq\mathcal C(P)\times\Omega^1(M,\tilde{\mathfrak g})$, the fiber derivatives of the Lagrangian \eqref{eq:lagrangiannonabelian} are given as follows:
\begin{enumerate}[(i)]
    \item When working with $T^\star\mathcal C(P)$, they read
    \begin{align*}
    \frac{\delta L_{\rm gau}}{\delta A}(A,\varepsilon) & =\left(\frac{\partial\mathscr L_{\rm gau}}{\partial A}(A,\varepsilon,B_A)+\operatorname{div}^{A*}\left(\frac{\partial\mathscr L_{\rm gau}}{\partial\beta}(A,\varepsilon,B_A)\right),~\iota_{\partial M}^*\left(\tr \frac{\partial\mathscr L_{\rm gau}}{\partial\beta}(A,\varepsilon,B_A)\right)\right),\\
    \frac{\delta L_{\rm gau}}{\delta\varepsilon}(A,\varepsilon) & =\left(\frac{\partial\mathscr L_{\rm gau}}{\partial\varepsilon}(A,\varepsilon,B_A),~0\right).
    \end{align*}
    \item When working with $T^\dagger\mathcal C(P)$, they read
    \begin{align*}
    \frac{\delta L_{\rm gau}}{\delta A}(A,\varepsilon) & =\left(\frac{\partialnew\mathscr L_{\rm gau}}{\partialnew A}(A,\varepsilon,B_A)+{\rm d}^{A*}\left(\frac{\partialnew\mathscr L_{\rm gau}}{\partialnew\beta}(A,\varepsilon,B_A) \right) ,~\iota_{\partial M}^*\left(\frac{\partialnew\mathscr L_{\rm gau}}{\partialnew\beta}(A,\varepsilon,B_A)\right)\right),\\
    \frac{\delta L_{\rm gau}}{\delta\varepsilon}(A,\varepsilon)& =\left(\frac{\partialnew\mathscr L_{\rm gau}}{\partialnew\varepsilon}(A,\varepsilon,B_A),~0\right).
    \end{align*}
\end{enumerate}
\end{lemma}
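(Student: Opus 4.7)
The plan is to mimic the proof of Lemma \ref{lemma:partialderivativevectorbundlekforms}, with the important modification that now the Lagrangian depends on the configuration variable $A$ in two ways: directly through $\mathscr L_{\rm gau}$, and through the reduced curvature $B_A={\rm d}^AA$. The first step is to compute how $B_A$ varies under a perturbation $A+\epsilon\,\delta A$ with $\delta A\in\Omega^1(M,\tilde{\mathfrak g})$, using the canonical trivialization \eqref{eq:trivialization}. Writing the curvature locally as ${\rm d}A+\tfrac12[A,A]$ and differentiating at $\epsilon=0$, one obtains $\delta B_A={\rm d}\,\delta A+[A,\delta A]={\rm d}^A\delta A$, where ${\rm d}^A$ is the covariant exterior derivative on $\tilde{\mathfrak g}$-valued forms as recalled in \eqref{eq:horizontalderivativebasic1forms}.

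Using this, the chain rule applied to $L_{\rm gau}(A+\epsilon\,\delta A,\varepsilon)=\int_M\mathscr L_{\rm gau}(A+\epsilon\,\delta A,\varepsilon,B_{A+\epsilon\,\delta A})$ gives
\begin{equation*}
\frac{\delta L_{\rm gau}}{\delta A}(A,\varepsilon)(\delta A)=\int_M\left(\frac{\partial\mathscr L_{\rm gau}}{\partial A}\bcdot\delta A+\frac{\partial\mathscr L_{\rm gau}}{\partial\beta}\bcdot{\rm d}^A\delta A\right),
\end{equation*}
where all partial derivatives are evaluated at $(A,\varepsilon,B_A)$. The next step is to integrate by parts the second term by applying the covariant divergence theorem (Proposition \ref{prop:covariantdivergence}) with $k=1$, which yields $-(-1)^k=+1$ in front of the interior divergence contribution. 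This produces exactly the interior term ${\rm div}^{A*}(\partial\mathscr L_{\rm gau}/\partial\beta)$ and the boundary term $\iota_{\partial M}^*(\tr\,\partial\mathscr L_{\rm gau}/\partial\beta)$ that appear in statement (i). The derivative with respect to $\varepsilon$ is immediate since $\varepsilon$ enters $\mathscr L_{\rm gau}$ only algebraically, producing no boundary contribution.

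For part (ii), the computation is entirely analogous but uses the pairing $\blangle\cdot,\cdot\brangle_\dagger$ and the alternative realization $\partialnew\mathscr L_{\rm gau}/\partialnew\,\bcdot$ of the fiber derivatives. The integration by parts is now carried out via the Leibniz rule for the covariant exterior derivative,
\begin{equation*}
{\rm d}\!\left(\delta A\bwedge\frac{\partialnew\mathscr L_{\rm gau}}{\partialnew\beta}\right)={\rm d}^A\delta A\bwedge\frac{\partialnew\mathscr L_{\rm gau}}{\partialnew\beta}-\delta A\bwedge{\rm d}^{A*}\frac{\partialnew\mathscr L_{\rm gau}}{\partialnew\beta},
\end{equation*}
combined with Stokes' theorem, to produce the interior contribution ${\rm d}^{A*}(\partialnew\mathscr L_{\rm gau}/\partialnew\beta)$ and the boundary contribution $\iota_{\partial M}^*(\partialnew\mathscr L_{\rm gau}/\partialnew\beta)$. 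Equivalence between formulations (i) and (ii) then follows from Lemma \ref{lemma:PhiEcontractions} through the isomorphism $\Phi_E$ in \eqref{eq:PhiE} applied to the adjoint bundle $\tilde{\mathfrak g}$.

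The only conceptually non-routine step is the curvature variation $\delta B_A={\rm d}^A\delta A$; the remainder is a direct application of the bundle-valued-form calculus already developed in \S\ref{sec:divergenceVectValued_kforms} and \S\ref{sec:partialderivativesvectorkforms}, specialized to $E=\tilde{\mathfrak g}$ with the linear connection induced by $A$, and to the form-degree $k=1$.
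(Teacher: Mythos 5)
Your proposal is correct and follows essentially the same route as the paper's proof: the key step in both is the curvature variation $\left.\tfrac{d}{d\epsilon}\right|_{\epsilon=0}B_{A+\epsilon\delta A}={\rm d}^A(\delta A)$ (the paper computes it upstairs on $P$ via the Cartan formula and then drops to $M$, while you use the equivalent $\tfrac12[A,A]$ convention directly, which yields the same identity), followed by integration by parts via Proposition \ref{prop:covariantdivergence} with $k=1$ for part (i) and by Lemma \ref{lemma:PhiEcontractions} (or equivalently the Leibniz rule with Stokes) for part (ii). No gaps.
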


\begin{proof}
Given $\delta A\in\Omega^1(M,\tilde{\mathfrak g})$, we denote by $\delta\mathbf A\in\overline\Omega^1(P,\mathfrak g)$ the corresponding basic form of the adjoint type. By using the Cartan formula, the curvature of $\mathbf A$ reads $B_{\mathbf A}={\rm d}\mathbf A+[\mathbf A,\mathbf A]$, where $[\mathbf A,\mathbf A](u,v)=[\mathbf A(u),\mathbf A(v)]$ for each $u,v\in\mathfrak X(P)$. This, together with \eqref{eq:horizontalderivativebasic1forms}, yields
\begin{align*}
\left.\frac{d}{d\epsilon}\right|_{\epsilon=0}B_{\mathbf A+\epsilon\delta\mathbf A} & =\left.\frac{d}{d\epsilon}\right|_{\epsilon=0}{\rm d}(\mathbf A+\epsilon\delta\mathbf A)+[\mathbf A+\epsilon\delta\mathbf A,\mathbf A+\epsilon\delta\mathbf A]\\
& ={\rm d}(\delta\mathbf A)+[\mathbf A,\delta\mathbf A]+[\delta\mathbf A,\mathbf A]\\
& ={\rm d}^{\mathbf A}(\delta\mathbf A).
\end{align*}
Subsequently, this drops to $d/d\epsilon|_{\epsilon=0}B_{A+\epsilon\delta A}={\rm d}^A(\delta A)$. By using this and Proposition \ref{prop:covariantdivergence}, we obtain
\begin{align*}
\frac{\delta L_{\rm gau}}{\delta A}(A,\varepsilon)(\delta A) & =\left.\frac{d}{d\epsilon}\right|_{\epsilon=0}\int_M\mathscr L_{\rm gau}(A+\epsilon\delta A,\varepsilon,B_{A+\epsilon\delta A})\\
& =\int_M\left(\frac{\partial\mathscr L_{\rm gau}}{\partial A}\bcdot\delta A+\frac{\partial\mathscr L_{\rm gau}}{\partial\beta}\bcdot{\rm d}^A(\delta A)\right)\\
& =\int_M\left(\frac{\partial\mathscr L_{\rm gau}}{\partial A}+\operatorname{div}^{A*}\left(\frac{\partial\mathscr L_{\rm gau}}{\partial\beta}\right)\right)\bcdot\delta A+\int_{\partial M}\iota_{\partial M}^*\left(\operatorname{tr}\frac{\partial\mathscr L_{\rm gau}}{\partial\beta}\right)\bcdot\iota_{\partial M}^*\delta A.
\end{align*}
The computation of $\delta L_{\rm gau}/\delta\varepsilon$ is straightforward. The second part is a straightforward application of Lemma \ref{lemma:PhiEcontractions}.
\end{proof}

\medskip

From the previous Lemma, the differential
\begin{equation*}
{\rm d}L_{\rm gau}:T\mathcal{C}(P)\to T'(T\mathcal{C}(P)),\quad(A, \varepsilon )\mapsto\left(A, \varepsilon  ,\frac{\delta L_{\rm gau}}{\delta A}(A,\varepsilon ),\frac{\delta L_{\rm gau}}{\delta\varepsilon}(A, \nu )\right)
\end{equation*}
of $L_{\rm gau}$ takes values in the restricted cotangent bundle, i.e., either $T^\star(T\mathcal{C}(P))$ or  $T^\dagger(T\mathcal{C}(P))$. As in the previous case on $V=\Omega^k(M,E)$, see \S\ref{sec:partialderivativesvectorkforms} and \S\ref{sec:LDVectBundleValued_kforms}, we can formulate the Lagrange--Dirac systems with boundary energy flow for this class of Lagrangian.

\medskip
In the present geometric context, an external force with values on $T^\star\mathcal C(P)$ takes the form
\begin{equation*}
\hat F^\star:T\mathcal C(P)\to T^\star\mathcal C(P),\quad(A,\varepsilon)\mapsto\left(A,\hat{\mathcal F}^\star(A,\varepsilon),\hat{\mathcal F}_\partial^\star(A,\varepsilon)\right).
\end{equation*}
In the same vein, an external force with values on $T^\dagger\mathcal C(P)$ reads
\begin{equation*}
\hat F^\dagger:T\mathcal C(P)\to T^\dagger\mathcal C(P),\quad(A,\varepsilon)\mapsto\left(A,\hat{\mathcal F}^\dagger(A,\varepsilon),\hat{\mathcal F}_\partial^\dagger(A,\varepsilon)\right).
\end{equation*}

A slight modification of Proposition \ref{proposition:LDVectBundValued_kforms} yields the dynamical equations for non-Abelian gauge theories in the space+time decomposition.

\begin{proposition}\label{proposition:LDnonabelian}\rm
Let $L_{\rm gau}:T\mathcal C(P)\simeq\mathcal C(P)\times\Omega^1(M,\tilde{\mathfrak g})\to\mathbb R$ be a Lagrangian as in \eqref{eq:lagrangiannonabelian}. Associated to each choice of the restricted dual space, the following statements hold:
\begin{enumerate}[(i)]
    \item The forced Lagrange--Dirac equations for a curve $(A,\varepsilon,\varsigma,\varsigma_\partial):[t_0,t_1] \to T\mathcal C(P)\oplus T^{\star}\mathcal C(P)$ read 
    \begin{equation}\label{LDequations_nonabeliangauge_star}
    \left\{\begin{array}{ll}
    \dot A=\varepsilon,&\vspace{4mm}\\
    \displaystyle\varsigma=\frac{\partial\mathscr L_{\rm gau}}{\partial\varepsilon}( A,\varepsilon,B_A),\; & \displaystyle\dot\varsigma=\frac{\partial\mathscr L_{\rm gau}}{\partial A}( A,\varepsilon,B_A)+\operatorname{div}^{A*} \left( \frac{\partial\mathscr L_{\rm gau}}{\partial\beta}( A,\varepsilon,B_A) \right) +\hat{\mathcal F}^{\star}(A,\varepsilon),\vspace{2mm}\\
    \varsigma_\partial=0, & \displaystyle\dot\varsigma_\partial=\iota_{\partial M}^*\left(\tr \frac{\partial\mathscr L_{\rm gau}}{\partial\beta}( A,\varepsilon,B_A)\right)+\hat{\mathcal F}^{\star}_{\partial}(A,\varepsilon).
    \end{array}\right.
    \end{equation}
    \medskip
    \item 
    The forced Lagrange--Dirac equations for a curve $(A,\varepsilon,\varsigma,\varsigma_\partial): [t_0,t_1]\to T\mathcal C(P)\oplus T^{\dagger}\mathcal C(P)$ read
    \begin{equation}\label{LDequations_nonabeliangauge_dagger}
    \left\{\begin{array}{ll}
    \dot A=\varepsilon,&\vspace{0.4cm}\\
    \displaystyle\varsigma=\frac{\partialnew\mathscr L_{\rm gau}}{\partialnew\varepsilon}( A,\varepsilon,B_A),\; & \displaystyle\dot\varsigma=\frac{\partialnew\mathscr L_{\rm gau}}{\partialnew A}( A,\varepsilon,B_A)+{\rm d}^{A*}\left( \frac{\partialnew\mathscr L_{\rm gau}}{\partialnew\beta}( A,\varepsilon,B_A) \right) +\hat{\mathcal F}^\dagger(A,\varepsilon),\vspace{0.2cm}\\
    \varsigma_\partial=0, & \displaystyle\dot\varsigma_\partial=\iota_{\partial M}^*\left(\frac{\partialnew\mathscr L_{\rm gau}}{\partialnew\beta}( A,\varepsilon,B_A)\right)+\hat{\mathcal F}_\partial^\dagger(A,\varepsilon).
    \end{array}\right.
    \end{equation}
\end{enumerate}
\end{proposition}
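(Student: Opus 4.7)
The plan is to mirror the argument used in Proposition \ref{proposition:LDVectBundValued_kforms}, exploiting the fact that the trivialization \eqref{eq:trivialization} reduces the affine-space situation to the vector-space computations already carried out for $V=\Omega^k(M,E)$. The core ingredients are (a) the explicit form of the canonical Dirac structure $D_{T^\star\mathcal C(P)}$ and $D_{T^\dagger\mathcal C(P)}$, defined exactly as in Definition \ref{def:diracstructurebundle} with $V$ replaced by the model vector space $\Omega^1(M,\tilde{\mathfrak g})$ of $T\mathcal C(P)$; (b) the formula for the Dirac differential, obtained by applying the canonical isomorphism $\gamma$ to ${\rm d}L_{\rm gau}$, whose components are computed in Lemma \ref{lemma:partialderivativegauge}; and (c) the definition of the Lagrangian force field $\widetilde{\hat F}^\star$ (resp.\ $\widetilde{\hat F}^\dagger$) analogous to Definition \ref{ExtForce_VectBundleValuedkForms}.

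The first step is to write out the defining inclusion in coordinates on the Pontryagin bundle. Using the trivialization \eqref{eq:trivialization}, a curve in $T\mathcal C(P)\oplus T^\star\mathcal C(P)$ is encoded by $(A,\varepsilon,\varsigma,\varsigma_\partial)$, and the tangent vector to the base curve in $T^\star\mathcal C(P)$ becomes $(\dot A,\dot\varsigma,\dot\varsigma_\partial)\in T_{(A,\varsigma,\varsigma_\partial)}(T^\star\mathcal C(P))\simeq\Omega^1(M,\tilde{\mathfrak g})\times\Lambda_m^1(M,\tilde{\mathfrak g}^*)\times\Lambda_{m-1}^1(\partial M,\tilde{\mathfrak g}^*)$. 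From the definition of the Dirac differential, one has
\begin{equation*}
{\rm d}_D^\star L_{\rm gau}(A,\varepsilon)=\left(A,\frac{\delta L_{\rm gau}}{\delta\varepsilon}(A,\varepsilon),-\frac{\delta L_{\rm gau}}{\delta A}(A,\varepsilon),\varepsilon\right),
\end{equation*}
and the Lagrangian force field reads
\begin{equation*}
\widetilde{\hat F}^\star(A,\varepsilon)=\left(A,\tfrac{\partial\mathscr L_{\rm gau}}{\partial\varepsilon}(A,\varepsilon,B_A),0,\hat{\mathcal F}^\star(A,\varepsilon),\hat{\mathcal F}^\star_\partial(A,\varepsilon),0\right).
\end{equation*}

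The second step is to insert these expressions into the inclusion that defines $D_{T^\star\mathcal C(P)}$. Matching components, the first two slots give the Legendre transform and the boundary Legendre condition, i.e.\ $\varsigma=\partial\mathscr L_{\rm gau}/\partial\varepsilon$ and $\varsigma_\partial=0$; the last slot gives $\dot A=\varepsilon$; and the middle slots give the evolution of $\varsigma$ and $\varsigma_\partial$ in terms of $-\delta L_{\rm gau}/\delta A+(\hat{\mathcal F}^\star,\hat{\mathcal F}^\star_\partial)$. At this point the computation of $\delta L_{\rm gau}/\delta A$ from part (i) of Lemma \ref{lemma:partialderivativegauge} produces exactly the right-hand sides appearing in \eqref{LDequations_nonabeliangauge_star}. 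Part (ii) is entirely analogous, using part (ii) of Lemma \ref{lemma:partialderivativegauge} and replacing $\operatorname{div}^{A*}$ and $\tr$ by ${\rm d}^{A*}$ and the identity on the representative, respectively.

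There is no substantive obstacle here: the one point that warrants care is checking that the affine structure of $\mathcal C(P)$ does not interfere with the construction of the canonical symplectic form, Tulczyjew triple and Dirac structure. This is handled by the trivialization \eqref{eq:trivialization}, which turns $T^\star\mathcal C(P)$ and $T^\dagger\mathcal C(P)$ into products with the model vector space and allows the formulas from Definition \ref{DefCanForms_VectBundValued} through Definition \ref{def:diracstructurebundle} to be transported verbatim. Once this is noted, the proof reduces to the bookkeeping described above, and both systems \eqref{LDequations_nonabeliangauge_star} and \eqref{LDequations_nonabeliangauge_dagger} follow.
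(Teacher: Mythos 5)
Your proof is correct and follows essentially the same route as the paper, which merely remarks that the result is a slight modification of Proposition \ref{proposition:LDVectBundValued_kforms}; you usefully spell out the bookkeeping, including the one genuine point of difference, namely that the affine structure of $\mathcal C(P)$ is neutralized by the trivialization \eqref{eq:trivialization} so that the canonical forms, Tulczyjew triple and Dirac structure transport verbatim from the linear case. Note only a sign slip in your prose: the covector component of ${\rm d}_D^\star L_{\rm gau}-\widetilde{\hat F}^\star$ is $-\delta L_{\rm gau}/\delta A-(\hat{\mathcal F}^\star,\hat{\mathcal F}^\star_\partial)$, and the Dirac condition $-\dot\varsigma=\delta\alpha$ then gives $\dot\varsigma=\delta L_{\rm gau}/\delta A+\hat{\mathcal F}^\star$ (not "$-\delta L_{\rm gau}/\delta A+\hat{\mathcal F}^\star$"), which is consistent with the equations you correctly arrive at.
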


\begin{remark}\rm
Given $A:[t_0,t_1]\to\mathcal C(P)$, we utilize \eqref{eq:trivialization} to identify $dA/dt\simeq(A,\dot A):[t_0,t_1]\to T\mathcal C(P)\simeq\mathcal C(P)\times\Omega^1(M,\tilde{\mathfrak g})$.
\end{remark}

Analogous to vector bundle-valued forms, the \emph{energy density} for non-Abelian gauge theories is given by
\begin{equation}\label{energy_gau}
\begin{aligned}
\mathscr{E}_{\rm gau}(A_x,\varepsilon_x,\beta_x) & =\frac{\partial\mathscr{L}_{\rm gau}}{\partial\varepsilon}(A_x,\varepsilon_x,\beta_x)\bcdot\varepsilon_x-\mathscr{L}_{\rm gau}(A_x,\varepsilon_x,\beta_x)\\
& =\varepsilon_x\bwedge\frac{\partialnew\mathscr{L}_{\rm gau}}{\partialnew\varepsilon}(A_x,\varepsilon_x,\beta_x)-\mathscr{L}_{\rm gau}(A_x,\varepsilon_x,\beta_x),
\end{aligned}
\end{equation}
for each $(A_x,\varepsilon_x,\beta_x)\in\operatorname{Conn}(P)_x\times(T^*M\otimes\tilde{\mathfrak g})_x\times\big(\bigwedge^2 T^*M\otimes\tilde{\mathfrak g}\big)_x$. The proof of the following result is similar to the one of Proposition \ref{prop:energybalancevectorbundle}.

\begin{proposition}[Energy balance]\rm\label{prop:energybalancegauge}
Associated to each choice of the restricted dual, we have the following energy balance along the solutions of the forced Lagrange--Dirac equations:
\begin{itemize}
    \item[(i)] For $T^\star\mathcal C(P)=\mathcal C(P)\times\Lambda_m^1(M,\tilde{\mathfrak g}^*)\times\Lambda_{m-1}^1(\partial M,\tilde{\mathfrak g}^*)$, let $(A,\varepsilon,\varsigma,\varsigma_\partial):[t_0,t_1] \to T\mathcal C(P)\oplus T^{\star}\mathcal C(P)$ be a solution of \eqref{LDequations_nonabeliangauge_star}. Then:
    \begin{align*}
    \frac{\partial }{\partial t}\mathscr{E}_{\rm gau}(A,\varepsilon,B_A)=-\operatorname{div}\left(\frac{\partial\mathscr L_{\rm gau}}{\partial\beta}(A,\varepsilon,B_A)\bcdot\varepsilon\right)+\hat{\mathcal F}^\star(A,\varepsilon)\bcdot\varepsilon,
    \end{align*}
    and
    \begin{align*}
    \frac{d}{dt} \int_M \mathscr{E}_{\rm gau}(A,\varepsilon,B_A)=\underbrace{\int_M\hat{\mathcal F}^\star(A,\varepsilon)\bcdot\varepsilon}_{\text{spatially distributed contribution}}+\underbrace{\int_{\partial M}\hat{\mathcal F}^\star_\partial(A,\varepsilon)\bcdot \iota_{\partial M}^*\varepsilon}_{\text{boundary contribution}}.
    \end{align*}

    \item[(ii)] For $T^\dagger\mathcal C(P)=\mathcal C(P)\times\Omega^{m-1}(M,\tilde{\mathfrak g}^*)\times\Omega^{m-2}(\partial M,\tilde{\mathfrak g}^*)$, let $(A,\varepsilon,\varsigma,\varsigma_\partial):[t_0,t_1] \to T\mathcal C(P)\oplus T^{\dagger}\mathcal C(P)$ be a solution of \eqref{LDequations_nonabeliangauge_dagger}. Then:
    \begin{align*}
    \frac{\partial }{\partial t}\mathscr{E}_{\rm gau}(A,\varepsilon,B_A)=-{\rm d}\left(\varepsilon\bwedge\frac{\partialnew\mathscr L_{\rm gau}}{\partialnew\beta}(A,\varepsilon,B_A)\right)+\varepsilon\bwedge\hat{\mathcal F}^\dagger(A,\varepsilon),
    \end{align*}
    and
    \begin{align*}
    \frac{d}{dt} \int_M \mathscr{E}_{\rm gau}(A,\varepsilon,B_A)=\underbrace{\int_M\varepsilon\bwedge\hat{\mathcal F}^\dagger(A,\varepsilon)}_{\text{spatially distributed contribution}}+\underbrace{\int_{\partial M}\iota_{\partial M}^*\varepsilon\bwedge\hat{\mathcal F}^\dagger_\partial(A,\varepsilon)}_{\text{boundary contribution}}.
    \end{align*}
\end{itemize}
\end{proposition}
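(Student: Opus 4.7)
The plan is to mimic the proof of Proposition \ref{prop:energybalancevectorbundle}, transposed to the affine configuration space $\mathcal{C}(P)$. The only genuinely new ingredient is the identity $\dot B_A = {\rm d}^A\varepsilon$, which was already established in the course of proving Lemma \ref{lemma:partialderivativegauge}: differentiating $B_{\mathbf A + \epsilon \delta \mathbf A} = {\rm d}(\mathbf A + \epsilon \delta \mathbf A) + [\mathbf A + \epsilon \delta \mathbf A, \mathbf A + \epsilon \delta \mathbf A]$ at $\epsilon = 0$ gives ${\rm d}^{\mathbf A}(\delta \mathbf A)$, which drops to ${\rm d}^A(\delta A)$ on the base; taking $\delta A = \dot A = \varepsilon$ along a solution yields the claim. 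With this in hand, the covariant divergence theorem (Proposition \ref{prop:covariantdivergence}) applied with $E = \tilde{\mathfrak g}$ and $k = 1$ provides the integration-by-parts tool for (i), while the Leibniz rule for ${\rm d}$ acting on $\bwedge$ plays the same role for (ii).

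For (i), differentiate $\mathscr{E}_{\rm gau}(A,\varepsilon,B_A)$ in $t$ along a solution of \eqref{LDequations_nonabeliangauge_star}. The two $\dot\varepsilon$ contributions coming from the product rule cancel. Using $\dot A = \varepsilon$, $\dot B_A = {\rm d}^A\varepsilon$, and the evolution equation
\[
\frac{\partial}{\partial t}\frac{\partial\mathscr L_{\rm gau}}{\partial\varepsilon} = \frac{\partial\mathscr L_{\rm gau}}{\partial A} + \operatorname{div}^{A*}\!\left(\frac{\partial\mathscr L_{\rm gau}}{\partial\beta}\right) + \hat{\mathcal F}^\star,
\]
the $\partial\mathscr L_{\rm gau}/\partial A$ terms cancel and one is left with
\[
\frac{\partial}{\partial t}\mathscr{E}_{\rm gau} = \operatorname{div}^{A*}\!\left(\frac{\partial\mathscr L_{\rm gau}}{\partial\beta}\right) \bcdot \varepsilon - \frac{\partial\mathscr L_{\rm gau}}{\partial\beta} \bcdot {\rm d}^A\varepsilon + \hat{\mathcal F}^\star \bcdot \varepsilon.
\]
Proposition \ref{prop:covariantdivergence} (with $k = 1$) collapses the first two terms to $-\operatorname{div}\!\left(\tfrac{\partial\mathscr L_{\rm gau}}{\partial\beta} \bcdot \varepsilon\right)$, which is the local balance. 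Integrating over $M$ and applying Stokes' theorem yields a boundary term $-\int_{\partial M}\iota_{\partial M}^*\!\left(\operatorname{tr}\tfrac{\partial\mathscr L_{\rm gau}}{\partial\beta}\right)\bcdot\iota_{\partial M}^*\varepsilon$. The constraint $\varsigma_\partial \equiv 0$ along the solution forces $\dot\varsigma_\partial = 0$, and the fourth equation of \eqref{LDequations_nonabeliangauge_star} then gives $\hat{\mathcal F}^\star_\partial = -\iota_{\partial M}^*(\operatorname{tr}\partial\mathscr L_{\rm gau}/\partial\beta)$, which converts this boundary integral into $\int_{\partial M}\hat{\mathcal F}^\star_\partial \bcdot \iota_{\partial M}^*\varepsilon$, completing (i).

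For (ii), the computation is identical in structure, with the contraction $\bcdot$ replaced by the wedge pairing $\bwedge$, the covariant divergence replaced by ${\rm d}^{A*}$, and Proposition \ref{prop:covariantdivergence} replaced by the Leibniz rule
\[
{\rm d}\!\left(\varepsilon \bwedge \frac{\partialnew\mathscr L_{\rm gau}}{\partialnew\beta}\right) = {\rm d}^A\varepsilon \bwedge \frac{\partialnew\mathscr L_{\rm gau}}{\partialnew\beta} - \varepsilon \bwedge {\rm d}^{A*}\!\left(\frac{\partialnew\mathscr L_{\rm gau}}{\partialnew\beta}\right),
\]
(the sign $(-1)^{|\varepsilon|} = -1$ arising since $\varepsilon$ has degree one), exactly as used in part (ii) of the proof of Proposition \ref{prop:energybalancevectorbundle}. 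Substituting the evolution equation from \eqref{LDequations_nonabeliangauge_dagger}, integrating over $M$, applying Stokes, and using $\hat{\mathcal F}^\dagger_\partial = -\iota_{\partial M}^*(\partialnew\mathscr L_{\rm gau}/\partialnew\beta)$ (again a consequence of $\varsigma_\partial \equiv 0$) produces the stated local and global balances. No step poses a real obstacle: the affine structure of $\mathcal C(P)$ is absorbed entirely by the trivialization \eqref{eq:trivialization}, so the argument is a faithful transcription of the linear case once the formula $\dot B_A = {\rm d}^A\varepsilon$ is in place.
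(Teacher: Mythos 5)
Your proposal is correct and follows exactly the route the paper intends: the paper states only that the proof is ``similar to the one of Proposition \ref{prop:energybalancevectorbundle}'', and your argument is the faithful transcription of that proof to the gauge setting, with the one genuinely new ingredient ($\dot B_A={\rm d}^A\varepsilon$, obtained from the curvature variation formula in Lemma \ref{lemma:partialderivativegauge}) correctly identified and justified. The sign bookkeeping ($k=1$ in Proposition \ref{prop:covariantdivergence} and in the Leibniz rule) and the use of $\varsigma_\partial\equiv 0$ to convert the Stokes boundary term into the boundary force are all handled correctly.
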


\begin{remark}[Expressions of the energy flux density]\label{energy_flux_density_YM}{\rm The local energy balance equations allow for the identification of the general expression of the energy flux density in terms of $\mathscr{L}_{\rm gau}$. It is given by the vector field density  $S=\frac{\partial\mathscr L_{\rm gau}}{\partial\beta}\bcdot\varepsilon= \frac{\partial\mathscr L_{\rm gau}}{\partial B_A}\bcdot\dot A$ or the $(m-1)$-form $\mathsf{S}=\varepsilon\bwedge\frac{\partialnew\mathscr L_{\rm gau}}{\partialnew\beta}= \dot A\bwedge\frac{\partialnew\mathscr L_{\rm gau}}{\partialnew B_A}$, depending on the chosen restricted dual.}
\end{remark}

\begin{remark}[Interior and boundary contributions]\rm
To clarify the structure of the interior and boundary contribution, we recall that $\hat{\mathcal F}_\partial^\star\in \Lambda^1_{m-1}(\partial M, \tilde{\mathfrak{g}}^*)$ and $\iota_{\partial M}^*\varepsilon=\iota_{\partial M}^*\dot A\in \Omega^1(\partial M, \tilde{\mathfrak{g}})$  are respectively understood as the external effort and flow variables on the boundary $\partial M$, while $\hat{\mathcal F}^\star\in \Lambda^1_m(M, E^*)$ and $\varepsilon=\dot A\in \Omega^1(M,E)$ are the external effort and flow variables on $M$. Their associated interior and boundary power densities are obtained via the contraction operations. The same comment applies to $\hat{\mathcal F}_\partial^\dagger\in \Omega^{m-2}(\partial M, \tilde{\mathfrak{g}}^*)$, resp., $\hat{\mathcal F}^\dagger\in \Omega^{m-2}(\partial M, \tilde{\mathfrak{g}}^*)$, where now the wedge operation is used to get the associated power densities.
\end{remark}

\begin{remark}[Lagrange--d'Alembert form]\rm In a similar way to \eqref{LdA_BundleValuedkforms_star} and \eqref{LdA_BundleValuedkforms_dagger}, mentioned earlier, by eliminating the variables $\varsigma(t)$, $\varsigma_\partial(t)$ and $\varepsilon(t)$ in 
\eqref{LDequations_nonabeliangauge_star} and \eqref{LDequations_nonabeliangauge_dagger}, we get the equations in the \textit{Lagrange--d'Alembert form} as follows:
\begin{equation*}
\left\{\begin{array}{l}
\displaystyle\frac{\partial }{\partial t}\frac{\partial\mathscr L_{\rm gau}}{\partial\varepsilon}(A,\dot A,B_A)=\frac{\partial\mathscr L_{\rm gau}}{\partial A}(A,\dot A,B_A)+\operatorname{div}^{ A_*}\left(\frac{\partial\mathscr L_{\rm gau}}{\partial\beta}(A,\dot A,B_A)\right)+\hat{\mathcal{F}}^\star(A,\dot{A}),\vspace{2mm}\\ 
\displaystyle\hat{\mathcal{F}}_\partial^\star(A, \dot A)=-\iota_{\partial M}^*\left(\tr\frac{\partial\mathscr L_{\rm gau}}{\partial\beta}(A,\dot A,B_A)\right),
\end{array}\right.
\end{equation*}
and
\begin{equation*}
\left\{\begin{array}{l}
\displaystyle\frac{\partial }{\partial t}\frac{\partialnew\mathscr L_{\rm gau}}{\partialnew \varepsilon}(A,\dot A,B_A)=\frac{\partialnew\mathscr L_{\rm gau}}{\partialnew A}(A,\dot A,B_A)+{\rm d}^{A_*} \left( \frac{\partialnew\mathscr L_{\rm gau}}{\partialnew\beta}(A,\dot A,B_A) \right) +\hat{\mathcal{F}}^{\dagger}(A,\dot{A}),\vspace{2mm}\\
\displaystyle\hat{\mathcal{F}}^{\dagger}_\partial( A, \dot A)=-\iota_{\partial M}^*\left(\frac{\partialnew\mathscr L_{\rm gau}}{\partialnew\beta}(A,\dot A,B_A)\right).
\end{array}\right.
\end{equation*}
\end{remark}

\subsection{Yang--Mills Lagrangian density}\label{sec:YMlagrangiandensity}

Let us particularize the previous equations to the Yang--Mills Lagrangian by following the geometric construction given in \cite[\S 3.3]{GBRa2008}. In the following, $G$ is assumed to be compact, connected and semisimple. Hence, the Killing form on $\mathfrak g$, which is denoted by $K:\mathfrak g\times\mathfrak g\to\mathbb R$, is non-degenerate. In general, $K$ is indefinite, but in our case the negative definiteness of $K$ is ensured by compacity. In addition, let $g$ be a Riemannian metric on $M$. The musical isomorphisms induced by $g$ are denoted by $\sharp:T^*M\to TM$ and $\flat:TM\to T^*M$.

The adjoint invariance of $K$ ensures that it induces a well defined fibered product on the adjoint bundle, which we denote by the same symbol for simplicity, $K:\tilde{\mathfrak g}\times_M\tilde{\mathfrak g}\to\mathbb R$.  The musical isomorphisms induced by $K$ are denoted by $\sharp_K:\tilde{\mathfrak g}^*\to\tilde{\mathfrak g}$ and $\flat_K:\tilde{\mathfrak g}\to\tilde{\mathfrak g}^*$. In addition, a fibered inner product on the $\tilde{\mathfrak g}$-valued forms on $M$,
\begin{equation*}
\mathbf g:\big(\textstyle\bigwedge^k T^*M\otimes\tilde{\mathfrak g}\big)\times_M\big(\textstyle\bigwedge^k T^*M\otimes\tilde{\mathfrak g}\big)\to\mathbb R,
\end{equation*}
may be defined as
\begin{equation}\label{eq:mathbfg}
\mathbf g_x\left(\alpha_1\otimes\sigma_1,\alpha_2\otimes\sigma_2\right)=-g_x(\alpha_1,\alpha_2)\,K(\sigma_1,\sigma_2),\quad x\in M,~\alpha_1,\alpha_2\in\textstyle\bigwedge^k T_x^*M,~\sigma_1,\sigma_2\in\tilde{\mathfrak g}_x,
\end{equation}
for pure elements, and extended to the tensor product by linearity. Note that the negative sign has been added to ensure the positive definiteness of $\mathbf g$. The musical isomorphisms induced by $\mathbf g$ are given by
\begin{align*}
\boldsymbol{\sharp}:\textstyle\bigwedge^k T^*M\otimes\tilde{\mathfrak g}\to\bigwedge^k TM\otimes\tilde{\mathfrak g}^*,\qquad & \alpha\otimes\sigma\mapsto\alpha^\sharp\otimes\sigma^{\flat_K},\\
\boldsymbol{\flat}:\textstyle\bigwedge^k TM\otimes\tilde{\mathfrak g}^*\to\bigwedge^k T^*M\otimes\tilde{\mathfrak g},\qquad & U\otimes\eta\mapsto U^\flat\otimes\eta^{\sharp_K}.
\end{align*}
At last, the Hodge star operator on $M$ is trivially extended to $E$-valued $k$-forms, being $E=\tilde{\mathfrak g}$ or $E=\tilde{\mathfrak g}^*$. Namely, for each $\alpha\in\bigwedge^k T^*M$ and $\sigma\in E$, it is defined as $\boldsymbol\star(\alpha\otimes\sigma)=(\star\alpha)\otimes\sigma$, where $\star:\bigwedge^k T^*M\to\bigwedge^{m-k} T^*M$ denotes the standard Hodge star operator.

On the other hand, we define the extended principal $G$-bundle as $\overline \pi_{P,M}:\overline P\to\overline M$, where $\overline P=[t_0,t_1]\times P$ and $\overline M=[t_0,t_1]\times M$. The base space $\overline M$ is interpreted as the spacetime and is endowed with the \emph{Minkowski metric}, that is, $\eta=-dt\otimes dt+g$, where $t\in[t_0,t_1]$ is the global coordinate (we omit the necessary pullbacks for brevity). As above, we can define a fibered inner product
\begin{equation*}
\boldsymbol\eta:\left(\textstyle\bigwedge^k T^*\overline M\otimes([t_0,t_1]\times\tilde{\mathfrak g})\right)\times_{\overline M}\left(\textstyle\bigwedge^k T^*\overline M\otimes([t_0,t_1]\times\tilde{\mathfrak g})\right)\to\mathbb R,
\end{equation*}
where we have used that the adjoint bundle of $\overline\pi_{P,M}:\overline P\to\overline M$ is given by $\left(\overline P\times\mathfrak g\right)/G=[t_0,t_1]\times\tilde{\mathfrak g}$. Namely, we set
\begin{equation*}
\boldsymbol\eta_{(t,x)}\left(\overline\alpha_1\otimes\overline\sigma_1,\overline\alpha_2\otimes\overline\sigma_2\right)=\eta_{(t,x)}\left(\overline\alpha_1,\overline\alpha_2\right)K(\sigma_1,\sigma_2)
\end{equation*}
for each $(t,x)\in\overline M$, $\overline\alpha_i\in\bigwedge^k T_{(t,x)}^*\overline M$ and $\overline\sigma_i=(t,\sigma_i)\in\left([t_0,t_1]\times\tilde{\mathfrak g}\right)_{(t,x)}$, $i=1,2$. In particular, for $k=2$ we distinguish two situations:
\begin{enumerate}
\item If $\overline\alpha_i=dt\wedge\alpha_i$ for some $\alpha_i\in T_x^*M$, $i=1,2$, then
\begin{equation*}\eta_{(t,x)}\left(\overline\alpha_1,\overline\alpha_2\right)=
\det\begin{pmatrix}
\eta_{(t,x)}(dt,dt) & \eta_{(t,x)}(dt,\alpha_2)\\
\eta_{(t,x)}(\alpha_1,dt) & \eta_{(t,x)}(\alpha_1,\alpha_2)
\end{pmatrix}=
\det\begin{pmatrix}
-1 & 0\\
0 & g_x(\alpha_1,\alpha_2)
\end{pmatrix}=
-g_x(\alpha_1,\alpha_2).
\end{equation*}
Thus, $\boldsymbol\eta_{(t,x)}\left(\overline\alpha_1\otimes\overline\sigma_1,\overline\alpha_2\otimes\overline\sigma_2\right)=\mathbf g_x\left(\alpha_1\otimes\sigma_1,\alpha_2\otimes\sigma_2\right)$.
\item If $\overline\alpha_i=\alpha_i^1\wedge\alpha_i^2\in\bigwedge^2 T_x^*M$, $i=1,2$, then
\begin{align*}
\eta_{(t,x)}\left(\overline\alpha_1,\overline\alpha_2\right) & =
\det\begin{pmatrix}
\eta_{(t,x)}(\alpha_1^1,\alpha_2^1) & \eta_{(t,x)}(\alpha_1^1,\alpha_2^2)\\
\eta_{(t,x)}(\alpha_1^2,\alpha_2^1) & \eta_{(t,x)}(\alpha_1^2,\alpha_2^2)
\end{pmatrix}\\
& =\det\begin{pmatrix}
g_x(\alpha_1^1,\alpha_2^1) & g_x(\alpha_1^1,\alpha_2^2)\\
g_x(\alpha_1^2,\alpha_2^1) & g_x(\alpha_1^2,\alpha_2^2)
\end{pmatrix}\\
& =g_x(\alpha_1,\alpha_2).
\end{align*}
Thus, $\boldsymbol\eta_{(t,x)}\left(\overline\alpha_1\otimes\overline\sigma_1,\overline\alpha_2\otimes\overline\sigma_2\right)=-\mathbf g_x\left(\alpha_1\otimes\sigma_1,\alpha_2\otimes\sigma_2\right)$.
\end{enumerate}

The bundle of connections of $\overline\pi_{P,M}:\overline P\to\overline M$ is denoted by $\operatorname{Conn}(\overline P)=\left(J^1\overline P\right)/G\to\overline M$. The correspondence between principal connections on $\overline\pi_{P,M}:\overline P\to\overline M$ and sections of the bundle of connections is denoted by
\begin{equation*}
\mathcal C(\overline P)\ni\mathsf A\overset{1:1}{\longleftrightarrow}\mathcal A\in\Omega^1\left(\overline P,\mathfrak g\right)\text{ principal connection},
\end{equation*}
where $\mathcal C(\overline P)=\Gamma(\operatorname{Conn}(\overline P)\to \overline M)$.

\begin{remark}\rm\label{remark:codifferentialoverlineM}
As for principal connections on the original bundles, a principal connection $\mathsf A$ on the extended bundle $\overline\pi_{P,M}:\overline P\to\overline M$ induces a linear connection $\nabla^{\mathsf A}$ on $\Omega^1\left(\overline M,[t_0,t_1]\times\tilde{\mathfrak g}\right)$. The corresponding exterior derivative is denoted by ${\rm d}^{\mathsf A}:\Omega^k\left(\overline M,[t_0,t_1]\times\tilde{\mathfrak g}\right)\to\Omega^{k+1}\left(\overline M,[t_0,t_1]\times\tilde{\mathfrak g}\right)$. Similarly, $\delta^{\mathsf A}:\Omega^k\left(\overline M,[t_0,t_1]\times\tilde{\mathfrak g}\right)\to\Omega^{k-1}\left(\overline M,[t_0,t_1]\times\tilde{\mathfrak g}\right)$ denotes the codifferential of ${\rm d}^{\mathsf A}$, where the Hodge star operator $\overline\star:\bigwedge^k T^*\overline M\to\bigwedge^{m+1-k}T^*\overline M$ is induced by the Minkowski metric $\eta=-dt\otimes dt+g$ on $\overline M$. Similarly, when regarded as $\mathcal A\in\Omega^1\left(\overline P,\mathfrak g\right)$, the covariant exterior derivative on the space of $\mathfrak g$-valued forms on $\overline P$ is denoted by ${\rm d}^{\mathcal A}:\Omega^k\left(\overline P,\mathfrak g\right)\to\Omega^{k+1}\left(\overline P,\mathfrak g\right)$.
\end{remark}

\begin{definition}\label{def_YM}\rm
The \emph{Yang--Mills Lagrangian density} is the bundle morphism
\begin{equation*}
\mathscr L_{\rm YM}:\operatorname{Conn}(\overline P)\times_{\overline M}\left(\textstyle\bigwedge^2 T^*\overline M\otimes([t_0,t_1]\times\tilde{\mathfrak g})\right)\to\textstyle\bigwedge^{m+1} T^*\overline M,
\end{equation*}
covering the identity $\operatorname{id}_{\overline M}$, defined as
\begin{equation}
\label{YM_L}
\mathscr L_{\rm YM}\left(\mathsf A_{(t,x)},\mathsf B_{(t,x)}\right)=\displaystyle\frac{1}{2}\boldsymbol\eta\left(\mathsf B_{(t,x)},\mathsf B_{(t,x)}\right)dt \wedge \mu_g,
\end{equation}
where $\mu_g\in\Omega^m(M)$ is the volume form given by $g$. 
\end{definition}

The pair $\left(\mathsf A_{(t,x)},\mathsf B_{(t,x)}\right)$, with $(t,x)\in\overline M$, defines (pointwisely) a principal connection on the extended principal bundle and its reduced curvature.

Given a principal connection $\mathcal A\in\Omega^1\left(\overline P,\mathfrak g\right)$ on $\overline\pi_{P,M}$, we may write $\mathcal A(t,x)=\mathbf A(t,x)+\mathbf A_0(t,x)\,dt$ for each $(t,x)\in\overline M$, where $\mathbf A(t,\cdot)\in\Omega^1(P,\mathfrak g)$ is a principal connection on $\pi_{P,M}$ and $\mathbf A_0(t,\cdot)\in C_G^\infty(P,\mathfrak g)$ (the space of $\Ad$-invariant functions on $P$ with values on $\mathfrak g$). The curvature of $\mathcal A$ is thus given by
\begin{equation}\label{eq:curvatureA}
\mathsf B_{\mathcal A}={\rm d}^{\mathcal A}\mathcal A={\rm d}^{\mathbf A}\mathbf A+\left(-\dot{\mathbf A}+{\rm d}^{\mathbf A}\mathbf A_0\right)\wedge dt\in\overline\Omega^2\left(\overline P,\mathfrak g\right).
\end{equation}
It is always possible to choose the temporal gauge $\mathbf A_0=0$ as a ``gauge fixing'' (cf. e.g., \cite[\S 3.3.]{GBRa2008} for details). By using this and the expressions of $\boldsymbol\eta$ for $k=2$ computed above, the action functional $\mathcal A_{\rm YM}:\mathcal C(\overline P)\to\mathbb R$ induced by the Yang--Mills density \eqref{YM_L} reads
\begin{equation*}
\mathcal A_{\rm YM}(\mathsf A)=\frac{1}{2}\int_{\overline M}\boldsymbol\eta\left(\mathsf B_{\mathsf A},\mathsf B_{\mathsf A}\right)dt\wedge\mu_g=\frac{1}{2}\int_{t_0}^{t_1}\int_M\left(\mathbf g(\dot A,\dot A)-\mathbf g(B_A,B_A)\right)\mu_g dt,
\end{equation*}
where $\mathsf A\in\mathcal C(\overline P)$ and $\mathsf B_{\mathsf A}\in\Omega^2\left(\overline M,[t_0,t_1]\times\tilde{\mathfrak g}\right)$ denotes its reduced curvature. This enables us to express the Yang--Mills Lagrangian density in the form treated in Section \ref{sec:nonbabelianfields}, see \eqref{eq:lagrangiandensitynonabelian}, namely,
\begin{equation}
\label{split_Lagrangian_density}
\mathscr L_{\rm YM}\left(A_x,\varepsilon_x, \beta_x\right)=\frac{1}{2}\left(\mathbf g\left(\varepsilon_x,\varepsilon_x\right)-\mathbf g\left(\beta_x,\beta_x\right)\right)\mu_g.
\end{equation}
As above, the triple $\left(A_x,\varepsilon_x,\beta_x\right)$, $x\in M$, defines (pointwisely) a principal connection, (negative) its time derivative and its reduced curvature. 

Physically, $A(t,\cdot)\in\mathcal C(P)$ describes the \emph{potential} (which is not physical in the sense that it cannot be measured), whereas $E(t)=-\dot A(t,\cdot)\in\Omega^1(M,\tilde{\mathfrak g})$ and $B_{A(t,\cdot)}\in\Omega^2(M,\tilde{\mathfrak g})$ describe the \emph{electric} and \emph{magnetic} parts of the field strength, respectively.

\subsection{Yang--Mills equations}

Lastly, the forced Lagrange--Dirac equations for the Yang--Mills Lagrangian density are computed. To that end, we need the following two lemmas.

\begin{lemma}\label{lemma:metricconnection}\rm
Let $A\in\mathcal C(P)$. If $G$ is compact and semisimple, then $\nabla^A$ is a metric connection with respect to $K$. In particular, $\flat_K\circ{\rm d}^A={\rm d}^{A*}\circ\flat_K$.
\end{lemma}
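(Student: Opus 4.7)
The plan is to reduce the statement to the fact that the Killing form $K$ on $\mathfrak g$ is $\mathrm{Ad}$-invariant, which is the only property of $K$ that enters. I would begin by identifying sections $\sigma\in\Gamma(\tilde{\mathfrak g})$ with $\mathrm{Ad}$-equivariant maps $\bar\sigma:P\to\mathfrak g$, so that the induced covariant derivative satisfies $\nabla^A_X\sigma \longleftrightarrow X^h(\bar\sigma)$, where $X\in\mathfrak X(M)$ and $X^h\in\mathfrak X(P)$ is its horizontal lift with respect to $\mathbf A\in\Omega^1(P,\mathfrak g)$. Under this correspondence, $\pi_{P,M}^*K(\sigma_1,\sigma_2) = K(\bar\sigma_1,\bar\sigma_2)$ as a $G$-invariant function on $P$, since $K$ is $\mathrm{Ad}$-invariant by the general theory of the Killing form.

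The key computation is metric compatibility of $\nabla^A$ with $K$. Pick $\sigma_1,\sigma_2\in\Gamma(\tilde{\mathfrak g})$ and $X\in\mathfrak X(M)$. Pulling back to $P$ and using that $X^h$ is $\pi_{P,M}$-related to $X$, together with the fact that $K:\mathfrak g\times\mathfrak g\to\mathbb R$ is a \emph{constant} (hence parallel) bilinear form on the fixed vector space $\mathfrak g$, the ordinary Leibniz rule for vector fields acting on $\mathfrak g$-valued functions gives
\begin{equation*}
X^h\bigl(K(\bar\sigma_1,\bar\sigma_2)\bigr) = K\bigl(X^h\bar\sigma_1,\bar\sigma_2\bigr) + K\bigl(\bar\sigma_1, X^h\bar\sigma_2\bigr).
\end{equation*}
Descending to $M$ this reads $X\bigl(K(\sigma_1,\sigma_2)\bigr) = K(\nabla^A_X\sigma_1,\sigma_2) + K(\sigma_1,\nabla^A_X\sigma_2)$, which is metric compatibility. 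Note that only $\mathrm{Ad}$-invariance of $K$ is used here; compactness and semisimplicity of $G$ enter only through the definiteness and non-degeneracy of $K$, which make $\flat_K$ an honest bundle isomorphism.

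For the second claim, I would first derive the pointwise intertwining on sections. By the defining identity $X(\eta(\sigma)) = (\nabla^{A*}_X\eta)(\sigma) + \eta(\nabla^A_X\sigma)$ for $\eta\in\Gamma(\tilde{\mathfrak g}^*)$ and $\sigma\in\Gamma(\tilde{\mathfrak g})$, setting $\eta = \flat_K\sigma_1$ and comparing with metric compatibility yields
\begin{equation*}
\bigl(\nabla^{A*}_X\flat_K\sigma_1\bigr)(\sigma_2) = K(\nabla^A_X\sigma_1,\sigma_2) = \bigl(\flat_K\nabla^A_X\sigma_1\bigr)(\sigma_2),
\end{equation*}
so $\flat_K\circ\nabla^A = \nabla^{A*}\circ\flat_K$ on $\Gamma(\tilde{\mathfrak g})=\Omega^0(M,\tilde{\mathfrak g})$. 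Since $\flat_K$ is $C^\infty(M)$-linear and the covariant exterior derivatives ${\rm d}^A$ and ${\rm d}^{A*}$ are built from the scalar exterior differential (common to both) and the connection action on the bundle-valued coefficient (which intertwines under $\flat_K$ by the previous identity), a straightforward induction on the form degree gives $\flat_K\circ{\rm d}^A = {\rm d}^{A*}\circ\flat_K$ on $\Omega^k(M,\tilde{\mathfrak g})$ for all $k$.

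There is no real obstacle here, the whole argument is bookkeeping once the equivariant picture on $P$ is in place; the only conceptual point worth highlighting is that the statement is essentially the $\mathrm{Ad}$-invariance of $K$ on $\mathfrak g$ transported to the associated bundle via the horizontal lift, and that semisimplicity and compactness of $G$ play no role in the compatibility itself, only in ensuring that $\flat_K$ makes sense as an isomorphism.
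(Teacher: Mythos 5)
Your proof is correct, but it takes a genuinely different route from the paper's. The paper works entirely in local coordinates: it normalizes the Killing form to $K_{ab}=-\delta_{ab}$ (using compactness), translates $\mathrm{Ad}$-invariance into the antisymmetry $f^a_{bc}=-f^c_{ba}$ of the structure constants, and then verifies the Leibniz identity $ {\rm d}\left(K(\xi_1,\xi_2)\right)=K(\nabla^A\xi_1,\xi_2)+K(\xi_1,\nabla^A\xi_2)$ by an explicit index computation; the intertwining with $\flat_K$ is then dismissed as "an easy computation using local coordinates." You instead argue intrinsically on the total space $P$: identifying sections of $\tilde{\mathfrak g}$ with equivariant maps $P\to\mathfrak g$ and the induced connection with the horizontal lift, metric compatibility falls out of the ordinary product rule for the constant bilinear form $K$ on the fixed vector space $\mathfrak g$, with $\mathrm{Ad}$-invariance entering only to make $K$ descend to the associated bundle. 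Your approach is cleaner and more conceptual, and it isolates precisely where each hypothesis is used --- in particular your observation that compactness and semisimplicity are irrelevant to the compatibility itself and only ensure that $\flat_K$ is an isomorphism is correct and is not visible in the paper's coordinate argument. Your derivation of $\flat_K\circ\nabla^A=\nabla^{A*}\circ\flat_K$ from the defining identity of the dual connection, and its extension to $k$-forms via the decomposition of ${\rm d}^A$ on pure tensors $\alpha\otimes\sigma$ (no genuine induction is needed, just $C^\infty(M)$-linearity of $\flat_K$ in the form factor), is also complete where the paper leaves the analogous step to the reader. The only price of your route is that it presupposes the standard dictionary between the induced linear connection on an associated bundle and horizontal differentiation of equivariant maps, which the paper avoids invoking by staying in coordinates.
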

\begin{proof}
We consider local coordinates on $M$ and a basis of local sections of $\tilde{\mathfrak g}$, as in the proof of Proposition \ref{prop:covariantdivergence}. The principal connection and the Killing form are locally given by $A=A_i^a\,dx^i\otimes \hat\xi_a$ and $K=K_{ab}\,\hat\xi^a\otimes \hat\xi^b$, respectively, for some local functions $A_i^a,K_{ab}\in C^\infty(M)$ such that $K_{ab}=K_{ba}$, $1\leq a,b\leq n$, $1\leq i\leq m$. Since $K$ is nondegenerate, it is always possible to pick the basis so that $K_{ab}=\epsilon_a\delta_{ab}$, $1\leq a,b\leq n$, where $\epsilon_a\in\{-1,1\}$. Moreover, since $G$ is compact, $K$ is negative-definite and, thus, $\epsilon_a=-1$ for each $1\leq a\leq n$. The adjoint invariance of $K$ gives
\begin{equation}\label{p1}
-f_{bc}^a=K(\hat\xi_a,f_{bc}^d \hat\xi_d)=K(\hat\xi_a,[\hat\xi_b,\hat\xi_c])=K([\hat\xi_a,\hat\xi_b],\hat\xi_c)=K(f_{ab}^d \hat\xi_d,\hat\xi_c)=-f_{ab}^c,
\end{equation}
where $f_{bc}^a\in\mathbb R$, $1\leq a,b,c\leq n$, are the structure constants of $\mathfrak g$. In short, we obtain $f_{bc}^a=-f_{ba}^c$ for each $1\leq a,b,c\leq n$. On the other hand, given $\xi_1=\xi_1^a\,\hat\xi_a,\xi_2=\xi_2^a\,\hat\xi_a\in\Gamma(\tilde{\mathfrak g}\to M)$, we have
\begin{equation*}
\nabla^A\xi_1
=\left(\partial_i\xi_1^a+f_{bc}^a\,A_i^b\,\xi_1^c\right)dx^i\otimes \hat\xi_a,
\end{equation*}
and analogous for $\xi_2$. By using this and \eqref{p1}, we obtain the first part:
\begin{align*}
{\rm d} \left( K(\xi_1,\xi_2)\right)  & =-{\rm d}\left(\xi_1^a\,\xi_2^a\right) =-\partial_i\left(\xi_1^a\xi_2^a\right)dx^i\\
& =-\left(\left(\partial_i\xi_1^a\right)\xi_2^a+\xi_1^a\left(\partial_i\xi_2^a\right)\right)dx^i\\
& =-\left(\left(\partial_i\xi_1^a-f_{bc}^a\,A_i^b\,\xi_1^a\right)\xi_2^c+\xi_1^a\left(\partial_i\xi_2^a+f_{bc}^a\,A_i^b\,\xi_2^c\right)\right)dx^i\\
& =-\left(\left(\partial_i\xi_1^a+f_{bc}^a\,A_i^b\,\xi_1^c\right)\xi_2^a+\xi_1^a\left(\partial_i\xi_2^a+f_{bc}^a\,A_i^b\,\xi_2^c\right)\right)dx^i\\
& =K(\nabla^A\xi_1,\xi_2)+K(\xi_1,\nabla^A\xi_2).
\end{align*}
The last part is an easy computation using local coordinates.
\end{proof}

\begin{lemma}\rm\label{lemma:partialderivativesYM}
For each $(A,\varepsilon)\in T\mathcal C(P)\simeq\mathcal C(P)\times\Omega^1(M,\tilde{\mathfrak g})$, by denoting $E=-\varepsilon$, the fiber derivatives of the Lagrangian density \eqref{split_Lagrangian_density} are given by
\begin{equation*}
\left\{\begin{array}{lll}
\displaystyle\frac{\partial\mathscr L_{\rm YM}}{\partial A}=0,\quad & \displaystyle\frac{\partial\mathscr L_{\rm YM}}{\partial\varepsilon}=-E^{\boldsymbol{\sharp}}\otimes\mu_g,\quad & \displaystyle\frac{\partial\mathscr L_{\rm YM}}{\partial\beta}=-B_A^{\boldsymbol\sharp}\otimes\mu_g,\vspace{2mm}\\
\displaystyle\frac{\partialnew\mathscr L_{\rm YM}}{\partialnew A}=0,\quad & \displaystyle\frac{\partialnew\mathscr L_{\rm YM}}{\partialnew\varepsilon}=-\boldsymbol\star E^{\flat_K},\quad & \displaystyle\frac{\partialnew\mathscr L_{\rm YM}}{\partialnew\beta}=-\boldsymbol\star B_A^{\flat_K}.
\end{array}\right.
\end{equation*}
\end{lemma}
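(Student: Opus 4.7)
The first column is immediate: since the Yang--Mills Lagrangian density \eqref{split_Lagrangian_density} depends only on $\varepsilon_x$ and $\beta_x$, and not explicitly on the connection value $A_x$, the fiber derivatives $\partial\mathscr L_{\rm YM}/\partial A$ and $\partialnew\mathscr L_{\rm YM}/\partialnew A$ are identically zero. So the real work is in the remaining four formulas.

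For $\partial\mathscr L_{\rm YM}/\partial\varepsilon$ and $\partial\mathscr L_{\rm YM}/\partial\beta$, I would differentiate directly. Bilinearity of $\mathbf g$ and the fact that the volume form $\mu_g$ is fixed give
\[
\frac{d}{d\epsilon}\Big|_{\epsilon=0}\mathscr L_{\rm YM}(A_x,\varepsilon_x+\epsilon\,\delta\varepsilon_x,\beta_x)=\mathbf g(\varepsilon_x,\delta\varepsilon_x)\,\mu_g,\qquad \frac{d}{d\epsilon}\Big|_{\epsilon=0}\mathscr L_{\rm YM}(A_x,\varepsilon_x,\beta_x+\epsilon\,\delta\beta_x)=-\mathbf g(\beta_x,\delta\beta_x)\,\mu_g.
\]
The task then reduces to rewriting the right-hand sides as the appropriate pairings. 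The key bundle identity needed is
\[
\mathbf g(\sigma_1,\sigma_2)\,\mu_g=-\bigl(\sigma_1^{\boldsymbol\sharp}\otimes\mu_g\bigr)\bcdot\sigma_2,
\]
which I would verify by a one-line check on pure tensors $\sigma_i=\alpha_i\otimes\eta_i$: the definitions of $\boldsymbol\sharp$ and of the $\bcdot$-contraction give $g(\alpha_1,\alpha_2)K(\eta_1,\eta_2)\mu_g$ on the right, and the minus sign coincides precisely with the one built into \eqref{eq:mathbfg}. Substituting in the first differentiation formula yields $\partial\mathscr L_{\rm YM}/\partial\varepsilon=-\varepsilon^{\boldsymbol\sharp}\otimes\mu_g$; using the identification $E=-\varepsilon$ then produces the stated form $-E^{\boldsymbol\sharp}\otimes\mu_g$. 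The computation for $\beta$ is identical but picks up an overall minus sign from the $-\frac12\mathbf g(\beta,\beta)$ term, giving $-B_A^{\boldsymbol\sharp}\otimes\mu_g$.

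For the second row I would use the analogous identity
\[
\mathbf g(\sigma_1,\sigma_2)\,\mu_g=-\,\sigma_2\bwedge\boldsymbol\star\sigma_1^{\flat_K},
\]
which is the Yang--Mills counterpart of the identity $\sigma_1^{\flat_\kappa}\bwedge\boldsymbol\star\sigma_2=\mathbf g(\sigma_1,\sigma_2)\mu_g$ used in the matter-field setting, the extra minus again coming from \eqref{eq:mathbfg}. Alternatively, and perhaps more economical, one can use Lemma \ref{lemma:PhiEcontractions} together with the first column already established: since $\partialnew\mathscr L_{\rm YM}/\partialnew\varepsilon=\Phi_{\tilde{\mathfrak g}}\bigl(\partial\mathscr L_{\rm YM}/\partial\varepsilon\bigr)$ and $\Phi_{\tilde{\mathfrak g}}$ acts as $U\otimes\mu_g\otimes\eta\mapsto i_U\mu_g\otimes\eta$, a direct unpacking (using $i_{\alpha^\sharp}\mu_g=(-1)^{m-1}\star\alpha$ for $1$-forms, as in the proof of Lemma \ref{lemma:invPhiE}) converts $-E^{\boldsymbol\sharp}\otimes\mu_g$ into $-\boldsymbol\star E^{\flat_K}$, and similarly for $\beta$.

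The main obstacle is strictly bookkeeping: tracking the sign in \eqref{eq:mathbfg}, the sign inside the Hodge star coming from the Riemannian volume contraction, and the $(-1)^{k(m-k)}$ that appears when applying $\Phi_{\tilde{\mathfrak g}}^{-1}$. These cancellations are delicate but routine once the two pointwise identities above are established. No analytical difficulty enters because the Lagrangian density is a bundle map, so the fiber derivatives reduce to purely algebraic computations on each fiber.
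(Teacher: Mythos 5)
Your overall plan is the right shape, and your second route for the $\partialnew$-row (push the first row through $\Phi_{\tilde{\mathfrak g}}$ via Lemma \ref{lemma:PhiEcontractions}) is in fact exactly what the paper does; the first column is also fine as you argue, since \eqref{split_Lagrangian_density} has no explicit $A_x$-dependence. However, there is a genuine sign gap: taken at face value, your two key identities prove the \emph{negatives} of four of the six stated formulas. Concretely, from your identity $\mathbf g(\sigma_1,\sigma_2)\,\mu_g=-\bigl(\sigma_1^{\boldsymbol\sharp}\otimes\mu_g\bigr)\bcdot\sigma_2$ you correctly deduce $\partial\mathscr L_{\rm YM}/\partial\varepsilon=-\varepsilon^{\boldsymbol\sharp}\otimes\mu_g$, but your final substitution is an algebra error: since $E=-\varepsilon$ and $\boldsymbol\sharp$ is linear, $-\varepsilon^{\boldsymbol\sharp}=+E^{\boldsymbol\sharp}$, not $-E^{\boldsymbol\sharp}$. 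The $\beta$-entry exposes the same problem with no substitution available to mask it: the minus from the $-\tfrac12\mathbf g(\beta,\beta)$ term cancels against the minus in your identity, and you land on $+B_A^{\boldsymbol\sharp}\otimes\mu_g$, the opposite of the statement. The same mismatch propagates through your wedge identity $\mathbf g(\sigma_1,\sigma_2)\,\mu_g=-\,\sigma_2\bwedge\boldsymbol\star\sigma_1^{\flat_K}$ to give $+\boldsymbol\star E^{\flat_K}$ and $+\boldsymbol\star B_A^{\flat_K}$ in the second row.

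The root cause is where the minus sign of \eqref{eq:mathbfg} lives. In this section the operative convention is that $\boldsymbol\sharp$ is the musical isomorphism of $\mathbf g$ itself, so the minus in $\mathbf g=-g\,K$ is absorbed into the musical maps rather than appearing as an extra sign in the pairing identities; this is visible in the paper's own coordinate computation, which records $E^{\boldsymbol\sharp}=-g^{ij}\,\varepsilon_j^a\,\partial_i\otimes\hat\xi^a$ with $K_{ab}=-\delta_{ab}$, whereas your literal reading of $\alpha\otimes\sigma\mapsto\alpha^\sharp\otimes\sigma^{\flat_K}$ (with $\flat_K$ the flat of the negative-definite $K$) produces the opposite sign. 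Under the intended convention the correct pointwise identities are $\mathbf g(\sigma_1,\sigma_2)\,\mu_g=\bigl(\sigma_1^{\boldsymbol\sharp}\otimes\mu_g\bigr)\bcdot\sigma_2$ and $\mathbf g(\sigma_1,\sigma_2)\,\mu_g=\sigma_2\bwedge\boldsymbol\star\sigma_1^{\flat_K}$, with no extra minus, and then your differentiation argument delivers precisely $\partial\mathscr L_{\rm YM}/\partial\varepsilon=\varepsilon^{\boldsymbol\sharp}\otimes\mu_g=-E^{\boldsymbol\sharp}\otimes\mu_g$ and $\partial\mathscr L_{\rm YM}/\partial\beta=-B_A^{\boldsymbol\sharp}\otimes\mu_g$, with the second row following either from your wedge identity or via $\Phi_{\tilde{\mathfrak g}}$ as in the paper. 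So the fix is not more bookkeeping of the same kind, but fixing the convention for $\boldsymbol\sharp$ and $\flat_K$ once and for all (ideally by verifying it in coordinates with $K_{ab}=-\delta_{ab}$, as the paper does) before invoking the pointwise identities; as written, your proof is internally inconsistent at the step ``$-\varepsilon^{\boldsymbol\sharp}\otimes\mu_g$ \dots\ produces the stated form $-E^{\boldsymbol\sharp}\otimes\mu_g$.''
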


\begin{proof}
By using local coordinates on $M$ and a basis of local sections of $\tilde{\mathfrak g}$, as in the proof of Lemma \ref{lemma:metricconnection}, we may locally write $A=A_i^a\,dx^{i}\otimes \hat\xi_a$ and $E=-\varepsilon=-\varepsilon_i^a\,dx^{i}\otimes \hat\xi_a$ for some local functions $A_i^a,\varepsilon_i^a\in C^\infty(M)$, $1\leq i\leq m$, $1\leq a\leq n$. Subsequently, the (reduced) curvature reads
\begin{equation*}
B_A=\left(\partial_i A_{j}^a+\Gamma_{i,b}^a\,A_j^b\right)\,dx^{i}\wedge dx^{j}\otimes \hat\xi_a,
\end{equation*}
where $\Gamma_{i,b}^a\in C^\infty(M)$, $1\leq i\leq m$, $1\leq a,b\leq n$, are the Christoffel symbols of $\nabla^A$, i.e., $\Gamma_{i,b}^a=f_{cb}^a A_i^c$.

The first part is a straightforward computation. For the second one, recall that the Riemannian metric and the Killing form are locally given by $g=g_{ij}\,dx^{i}\otimes dx^{j}$ and $K=-\delta_{ab}\,\hat\xi^a\otimes \hat\xi^b$, respectively. The Riemannian volume form is locally given by $\mu_g=\sqrt{|g|}\,d^m x\in\Omega^m(M)$ and the Hodge star operator is locally given by
\begin{equation*}
\boldsymbol\star(dx^i\otimes \hat\xi_a )=\sqrt{|g|}\,g^{ij}\,d_{(j)}^{m-1}x\otimes \hat\xi_a ,\qquad\boldsymbol\star(dx^{i}\wedge dx^{j}\otimes \hat\xi_a )=\sqrt{|g|}\,g^{ik}\,g^{jl}\,d_{(k,l)}^{m-2}x\otimes \hat\xi_a.
\end{equation*}
Therefore, we have
\begin{equation*}
E^{\boldsymbol\sharp}=-g^{ij}\,\varepsilon_j^a \,\partial_{i}\otimes \hat\xi^a,\qquad B_A^{\boldsymbol\sharp}=g^{ik}\,g^{jl}\left(\partial_i A_{j}^a+\Gamma_{i,c}^a\,A_j^c\right)\partial_{k}\wedge\partial_{l}\otimes \hat\xi^a.
\end{equation*}
In view of \eqref{eq:PhiE}, from the above relations it follows 
\begin{equation*}
\left\{\begin{array}{l}
\displaystyle\frac{\partialnew\mathscr L_{\rm YM}}{\partialnew A}= \Phi_E\circ\frac{\partial\mathscr L_{\rm YM}}{\partial A}=\Phi_E\circ 0=0,\vspace{0.1cm}\\[5mm]
\displaystyle\frac{\partialnew\mathscr L_{\rm YM}}{\partialnew\varepsilon}=\Phi_E\circ(-E^{\boldsymbol\sharp}\otimes\mu_g)=\sqrt{|g|}\,g^{ij}\,\varepsilon_j^a\,d_{(i)}^{m-1}x\otimes \hat\xi^a=-\left(\boldsymbol\star E\right)^{\flat_K},\vspace{0.1cm}\\[5mm]
\displaystyle\frac{\partialnew\mathscr L_{\rm YM}}{\partialnew\beta}=\Phi_E\circ\left(-B_A^{\boldsymbol\sharp}\otimes\mu_g\right)=-\sqrt{|g|}\,g^{ik}\,g^{jl}\left(\partial_i A_{j}^a+\Gamma_{i,c}^a\,A_j^c\right)d_{(k,l)}^{m-2}x\otimes \hat\xi^a=-\left(\boldsymbol\star B_A\right)^{\flat_K}.
\end{array}\right.
\end{equation*}
We conclude by noting that $\boldsymbol\star$ commutes with $\flat_K$.
\end{proof}

\medskip

From the previous results and by denoting $E=-\varepsilon$, we obtain the forced Lagrange--Dirac equations for the Yang--Mills Lagrangian density.

\begin{enumerate}[(i)]
    \item The Lagrange--Dirac equations \eqref{LDequations_nonabeliangauge_star} for the Lagrangian density \eqref{split_Lagrangian_density} read
    \begin{equation*}
    \left\{\begin{array}{ll}
    \dot A=-E, & \vspace{0.2cm}\\
    \displaystyle\varsigma=- E^{\boldsymbol{\sharp}}\otimes\mu_g,\qquad & \displaystyle\dot\varsigma=-\operatorname{div}^{A*}\left(B_A^{\boldsymbol\sharp}\otimes\mu_g\right)+\hat{\mathcal F}^\star,\vspace{0.2cm}\\
    \varsigma_\partial=0, & \displaystyle\dot\varsigma_\partial=-\iota_{\partial M}^*\left(\tr\left(B_A^{\boldsymbol\sharp}\otimes\mu_g\right)\right)+\hat{\mathcal F}_\partial^\star.
    \end{array}\right.
    \end{equation*}
    \item  The Lagrange--Dirac equations \eqref{LDequations_nonabeliangauge_dagger} for the Lagrangian density \eqref{split_Lagrangian_density} read
    \begin{equation*}
    \left\{\begin{array}{ll}
    \dot A=-E, & \vspace{0.2cm}\\
    \displaystyle\varsigma=-\boldsymbol\star E^{\flat_K},\qquad & \displaystyle\dot\varsigma=-{\rm d}^{A*}(\boldsymbol\star B_A^{\flat_K})+\hat{\mathcal F}^\dagger,\vspace{0.2cm}\\
    \varsigma_\partial=0, & \displaystyle\dot\varsigma_\partial=-\iota_{\partial M}^*\left(\boldsymbol\star B_A^{\flat_K}\right)+\hat{\mathcal F}_\partial^\dagger.
    \end{array}\right.
    \end{equation*}
\end{enumerate}

As for matter fields, external forces describe interior and boundary currents. By using Lemma \ref{lemma:metricconnection}, eliminating the variables $\dot A,\varsigma,\varsigma_\partial$, and denoting $\boldsymbol\star\hat{\mathcal F}^\dagger=-J^{\flat_K}$ and $\boldsymbol\star_\partial\hat{\mathcal F}_\partial^\dagger=j^{\flat_K}$, where $J:T\mathcal C(P)\to\Omega^1(M,\tilde{\mathfrak g})$ and $j:T\mathcal C(P)\to\Omega^1(\partial M,\tilde{\mathfrak g})$, the forced Lagrange--Dirac equations on $T\mathcal C(P)\oplus T^\dagger\mathcal C(P)$ read
\begin{equation}\label{eq:YMeqs}
\dot E-\delta^A B_A=-J,\qquad\boldsymbol\star_\partial\left(\iota_{\partial M}^*(\boldsymbol\star B_A)\right)=j,
\end{equation}
where $\delta^A=(-1)^{m+1}\boldsymbol\star\circ{\rm d}^A\circ\boldsymbol\star:\Omega^2(M,\tilde{\mathfrak g})\to\Omega^1(M,\tilde{\mathfrak g})$ is the codifferential of ${\rm d}^A$, as defined earlier in the general case.
Together with the equations ${\rm d}^AB_A=0$ and $\dot B_A=- {\rm d}^A E$ which follow from $B_A={\rm d}^AA$ and $E=-\dot A$, these are the \emph{Yang--Mills equations with external currents and boundary conditions} in the space+time decomposition. See \S\ref{sec:YMHeqs} for a discussion on the boundary conditions.

\paragraph{Energy and charge equations} The energy density \eqref{energy_gau} for the Yang--Mills field becomes
\begin{equation*}
\mathscr E_{\rm YM}(A,E,B_A)=\frac{1}{2}\left(\mathbf{g}(E,E)+\mathbf{g}(B_A,B_A)\right)\mu_g.
\end{equation*}
The local and global energy balances given in (ii) of Proposition \ref{prop:energybalancegauge} yield the \emph{non-Abelian Poynting theorem} and its global version:
\begin{align*}
\frac{\partial\mathscr E_{\rm YM}}{\partial t} & = - {\rm d}\mathsf S+E\bwedge\boldsymbol{\star}\, J^{\flat_K},
\end{align*}
and
\begin{align*}
\frac{d}{dt} \int_M \mathscr E_{\rm YM} & = \int_M E\bwedge\boldsymbol{\star}\, J^{\flat_K}+\int_{\partial M}\big(\boldsymbol\star_\partial\,j^{\flat_K}\big)\bwedge\iota_{\partial M}^*E,
\end{align*}
respectively, where $\mathsf S=E\bwedge\boldsymbol{\star}B_A^{\flat_K}\in\Omega^{m-1}(M)$ (recall Remark \ref{energy_flux_density_YM}), which gives the \emph{non-Abelian Poynting vector}, $\mathbf S=(\boldsymbol\star\mathsf S)^\sharp\in\mathfrak X(M)$.

The \emph{non-Abelian charge density} is defined as
\begin{equation}\label{eq:charge_YM}
\rho=\delta^A E.
\end{equation}
From the first equation of \eqref{eq:YMeqs}, we obtain the charge conservation law: $\dot\rho+\delta^A J=0$.

\paragraph{Spacetime formulation} For completeness, we illustrate here how the equations obtained above yield the usual spacetime formulation of the Yang--Mills equation.
The charge density and the interior current may be gathered in the \emph{color current},
\begin{equation}\label{eq:colorcurrent}
\mathsf J=J+\rho\,dt\in\Omega^1\left(\overline M,[t_0,t_1]\times\tilde{\mathfrak g}\right).
\end{equation}
As a basic form of the adjoint type on the total space, it reads $\mathcal J=\mathbf J+\boldsymbol\rho\,dt\in\overline\Omega^1\left(\overline P,\mathfrak g\right)$, where $\boldsymbol\rho\in C_G^\infty\left(\overline P,\mathfrak g\right)$ and $\mathbf J\in\overline\Omega^1\left(\overline P,\mathfrak g\right)$ correspond to $\rho$ and $J$, respectively.

\begin{corollary}\rm\label{corollary:unified_YMeqs} The equations derived above yield the Yang--Mills equations and non-Abelian charge conservation as follows:
\begin{equation*}
\delta^{\mathsf A}\mathsf B_{\mathsf A}=\mathsf J, \qquad {\rm d}^\mathsf{A}\mathsf{B}_{\mathsf{A}}=0,\qquad \delta^{\mathsf A}\mathsf J=0.
\end{equation*}
\end{corollary}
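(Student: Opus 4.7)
The plan is to fix the temporal gauge $\mathbf{A}_{0}=0$, so that by \eqref{eq:curvatureA} the extended curvature decomposes as $\mathsf{B}_{\mathsf{A}} = B_{A} + E\wedge dt$ with $E=-\dot{A}$. More generally, every $\tilde{\mathfrak{g}}$-valued form on $\overline{M}$ splits uniquely as $\omega = \omega_{s}+\omega_{t}\wedge dt$ with $\omega_{s},\omega_{t}$ pulled back from $M$, and since the $dt$-component of $\mathsf{A}$ vanishes, one checks directly that
\begin{equation*}
{\rm d}^{\mathsf{A}}\omega = {\rm d}^{A}\omega_{s} + \bigl(\dot{\omega}_{s} - {\rm d}^{A}\omega_{t}\bigr)\wedge dt.
\end{equation*}
This single bookkeeping formula drives the whole proof.

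For the Bianchi identity ${\rm d}^{\mathsf{A}}\mathsf{B}_{\mathsf{A}}=0$, I apply the displayed formula with $\omega_{s}=B_{A}$ and $\omega_{t}=E$: the spatial part vanishes by the standard spatial Bianchi identity ${\rm d}^{A}B_{A}=0$, and the $dt$-component vanishes because differentiating $B_{A}={\rm d}^{A}A$ in time yields $\dot{B}_{A}=-{\rm d}^{A}E$, both of which were recorded immediately after \eqref{eq:YMeqs}.

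For the Yang--Mills equation $\delta^{\mathsf{A}}\mathsf{B}_{\mathsf{A}} = \mathsf{J}$, I use $\delta^{\mathsf{A}} = \pm\,\overline{\star}\,{\rm d}^{\mathsf{A}}\overline{\star}$ from Remark \ref{remark:codifferentialoverlineM}. A short calculation with the Minkowski metric $\eta$ of signature $(-,+,\dots,+)$ shows that, up to explicit signs, the spacetime Hodge dual sends a spatial $k$-form $\omega_{s}$ to $\boldsymbol{\star}\omega_{s}\wedge dt$ and a mixed form $\omega_{t}\wedge dt$ to $\boldsymbol{\star}\omega_{t}$. Applying this to $\mathsf{B}_{\mathsf{A}}=B_{A}+E\wedge dt$, then the decomposition formula for ${\rm d}^{\mathsf{A}}$, and finally $\overline{\star}$ once more, one finds that the spatial component of $\delta^{\mathsf{A}}\mathsf{B}_{\mathsf{A}}$ is $\dot{E}-\delta^{A}B_{A}$ and the $dt$-component is $\delta^{A}E$. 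Equating these to the corresponding components of $\mathsf{J}=J+\rho\,dt$ reproduces exactly $\dot{E}-\delta^{A}B_{A}=-J$ from \eqref{eq:YMeqs} and the definition $\rho=\delta^{A}E$ from \eqref{eq:charge_YM}.

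For $\delta^{\mathsf{A}}\mathsf{J}=0$, I apply $\delta^{\mathsf{A}}$ to $\delta^{\mathsf{A}}\mathsf{B}_{\mathsf{A}}=\mathsf{J}$, so the claim reduces to $(\delta^{\mathsf{A}})^{2}\mathsf{B}_{\mathsf{A}}=0$. Using $\delta^{\mathsf{A}}=\pm\,\overline{\star}\,{\rm d}^{\mathsf{A}}\overline{\star}$ twice and $\overline{\star}\overline{\star}=\pm\operatorname{id}$, this becomes $\pm\,\overline{\star}({\rm d}^{\mathsf{A}})^{2}\overline{\star}\mathsf{B}_{\mathsf{A}}=\pm\,\overline{\star}[\mathsf{B}_{\mathsf{A}},\overline{\star}\mathsf{B}_{\mathsf{A}}]$, which vanishes by the standard Yang--Mills argument: swapping the two index pairs carried by $\mathsf{B}_{\mathsf{A}}$ against each other produces simultaneous minus signs from the 2-form antisymmetry and the Lie-bracket antisymmetry, forcing the expression to equal its own negative. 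The principal technical obstacle I anticipate is sign tracking between $\overline{\star}$ and $\boldsymbol{\star}$ when commuting through wedge products with $dt$ under Lorentzian signature; once a sign convention is fixed, the remaining content is the direct verification sketched above.
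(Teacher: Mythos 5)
Your proposal is correct in substance but takes a genuinely different route from the paper for the main identity $\delta^{\mathsf A}\mathsf B_{\mathsf A}=\mathsf J$. The paper never decomposes $\overline\star$ or $\delta^{\mathsf A}$ explicitly: it writes $\delta^{\mathsf A}\mathsf B_{\mathsf A}=\varpi_1\,dt+\varpi_2$ and identifies $\varpi_1=\rho$, $\varpi_2=J$ \emph{weakly}, by pairing against compactly supported test forms $\alpha\,dt$ and $\beta$ under $\int_{\overline M}\boldsymbol\eta(\cdot,\cdot)\,dt\wedge\mu_g$, moving $\delta^{\mathsf A}$ onto the test form as ${\rm d}^{\mathsf A}$, and using only the block formulas for $\boldsymbol\eta$ already computed in \S\ref{sec:YMlagrangiandensity}; this sidesteps all Lorentzian Hodge-star sign bookkeeping, which is exactly the obstacle you flag. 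Your direct computation via $\delta^{\mathsf A}=\pm\,\overline\star\,{\rm d}^{\mathsf A}\,\overline\star$ and the splitting of $\overline\star$ into $\boldsymbol\star$ and $dt$-factors is equally valid and arguably more transparent, but the signs must be fixed consistently: as written, your bookkeeping formula ${\rm d}^{\mathsf A}\omega={\rm d}^A\omega_s+(\dot\omega_s-{\rm d}^A\omega_t)\wedge dt$ is inconsistent with the Bianchi computation you then perform (with $\omega_s=B_A$, $\omega_t=E$ and $\dot B_A=-{\rm d}^AE$, its $dt$-component would be $-2\,{\rm d}^AE$ rather than $0$; the correct $dt$-component is $(-1)^k\dot\omega_s+{\rm d}^A\omega_t$ for a spatial $k$-form $\omega_s$). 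The Bianchi identity itself is handled identically in both proofs. For $\delta^{\mathsf A}\mathsf J=0$ you actually supply more than the paper does: the paper says the vanishing of $\delta^{\mathsf A}\delta^{\mathsf A}\mathsf B_{\mathsf A}$ is ``readily obtained,'' whereas in the non-Abelian case $(\delta^{\mathsf A})^2\neq 0$ on general forms and one genuinely needs your argument that $({\rm d}^{\mathsf A})^2\,\overline\star\mathsf B_{\mathsf A}=[\mathsf B_{\mathsf A}\wedge\overline\star\mathsf B_{\mathsf A}]$ vanishes --- though note that the cancellation there comes from the symmetry of the metric contraction $g^{\mu\rho}g^{\nu\sigma}$ against the antisymmetry of the Lie bracket, not from the $2$-form antisymmetry per se.
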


\begin{proof}
Firstly, we write
\begin{equation}\label{demo}
\delta^{\mathsf A}\mathsf B_{\mathsf A}=\varpi_1\,dt+\varpi_2\in\Omega^1(\overline M,[t_0,t_1]\times\tilde{\mathfrak g}),
\end{equation}
where $\varpi_1\in\Omega^0(\overline M,[t_0,t_1]\times\tilde{\mathfrak g})$ and $\varpi_2(t,\cdot)\in\Omega^1(M,\tilde{\mathfrak g})$ for each $t\in[t_0,t_1]$. On the other hand, by substituting the definitions of electric and magnetic fields on \eqref{eq:curvatureA} (with $\mathbf A_0=0$), we get
\begin{equation}\label{demo0}
\mathsf B_{\mathsf{A}}(t,\cdot)= B_{A(t,\cdot)} + E(t,\cdot)\wedge dt,\qquad t\in[t_0,t_1].
\end{equation} 

On the other hand, let $\alpha\in\Omega^0(M,\tilde{\mathfrak g})=C^\infty(M,\tilde{\mathfrak g})$ with compact support inside $M$, i.e., ${\rm cl}\left({\rm supp}\,\alpha\right)\subset{\rm int}M=M-\partial M$, where ${\rm cl}$ denotes the closure. Note that
\begin{align}\nonumber
\boldsymbol\eta\left(E\wedge dt,{\rm d}^A\alpha\wedge dt\right) & =
\det\begin{pmatrix}
\boldsymbol\eta(dt,dt) & \boldsymbol\eta(dt,{\rm d}^A\alpha)\\
\boldsymbol\eta(E,dt) & \boldsymbol\eta(E,{\rm d}^A\alpha)
\end{pmatrix}\\ \label{demo1}
& =
\det\begin{pmatrix}
-1 & 0\\
0 & \boldsymbol\eta(E,{\rm d}^A\alpha)
\end{pmatrix}=
-\boldsymbol\eta(E,{\rm d}^A\alpha).
\end{align}
By using that ${\rm d}^{\mathsf A}(\alpha\,dt)={\rm d}^A\alpha\wedge dt$, as well as \eqref{eq:charge_YM}, \eqref{demo}, \eqref{demo0} and \eqref{demo1}, we obtain
\begin{align*}
\int_{\overline M}\boldsymbol\eta\left(\varpi_1\,dt,\alpha\,dt\right)dt\wedge\mu_g & =\int_{\overline M}\boldsymbol\eta\left(\delta^{\mathsf A}\mathsf B_{\mathsf A},\alpha\,dt\right)dt\wedge\mu_g =\int_{\overline M}\boldsymbol\eta\left(\mathsf B_{\mathsf A},{\rm d}^{\mathsf A}(\alpha\,dt)\right)dt\wedge\mu_g\\
& =\int_{\overline M}\boldsymbol\eta\left(E\wedge dt,{\rm d}^A\alpha\wedge dt\right)dt\wedge\mu_g =-\int_{\overline M}\boldsymbol\eta\left(E,{\rm d}^A\alpha\right)dt\wedge\mu_g\\
& =-\int_{\overline M}\boldsymbol\eta\left(\delta^A E,\alpha\right)dt\wedge\mu_g =-\int_{\overline M}\boldsymbol\eta\left(\rho,\alpha\right)dt\wedge\mu_g\\
& =\int_{\overline M}\boldsymbol\eta\left(\rho\,dt,\alpha\,dt\right)dt\wedge\mu_g.
\end{align*}
Since $\alpha$ is arbitrary and $\boldsymbol\eta$ is non-degenerate, we conclude that $\varpi_1=\rho$.

Analogously, let $\beta(t,\cdot)\in\Omega^1(M,\tilde{\mathfrak g})$ for each $t\in[t_0,t_1]$ with compact support inside $\overline M$, i.e., ${\rm cl}\{(t,x)\in\overline M\mid\beta(t,x)\neq 0\}\subset{\rm int}\,\overline M=\overline M-\partial\overline M$. By using \eqref{eq:YMeqs}, \eqref{demo}, \eqref{demo0} and \eqref{demo1}, we get
\begin{align*}
\int_{\overline M}\boldsymbol\eta\left(\varpi_2,\beta\right)dt\wedge\mu_g & =\int_{\overline M}\boldsymbol\eta\left(\delta^{\mathsf A}\mathsf B_{\mathsf A},\beta\right)dt\wedge\mu_g =\int_{\overline M}\boldsymbol\eta\left(\mathsf B_{\mathsf A},{\rm d}^{\mathsf A}\beta\right)dt\wedge\mu_g\\
& =\int_{\overline M}\boldsymbol\eta\left(B_A+E\wedge dt,{\rm d}^A\beta-\dot\beta\wedge dt\right)dt\wedge\mu_g\\
&=\int_{\overline M}\left(\boldsymbol\eta(B_A,{\rm d}^A\beta)+\boldsymbol\eta(E,\dot\beta)\right)dt\wedge\mu_g\\
& =\int_{\overline M}\boldsymbol\eta\left(\delta^A B_A-\dot E,\beta\right)dt\wedge\mu_g =\int_{\overline M}\boldsymbol\eta\left(J,\beta\right)dt\wedge\mu_g.
\end{align*}
Since $\beta$ is arbitrary and $\boldsymbol\eta$ is non-degenerate, we conclude that $\varpi_2=J$. By substituting this in \eqref{demo} and using \eqref{eq:colorcurrent}, we conclude:
\begin{equation*}
\delta^{\mathsf A}\mathsf B_{\mathsf A}=\rho\,dt=J=\mathsf J.
\end{equation*}
The identity ${\rm d}^{\mathsf A}\mathsf B_{\mathsf A}=0$ is a consequence of \eqref{demo0} by recalling that ${\rm d}^A B_A=0$ and $\dot B_A=-{\rm d}^A E$; namely,
\begin{align*}
{\rm d}^{\mathsf A}\mathsf B_{\mathsf A} & ={\rm d}^{\mathsf A}(B_A+E\wedge dt) ={\rm d}^A B_A-\dot B_A\wedge dt-{\rm d}^AE\wedge dt\\
& ={\rm d}^A E\wedge dt-{\rm d}^AE\wedge dt=0.
\end{align*}
Finally, from $\delta^\mathsf{A}\mathsf{B}_\mathsf{A}=\mathsf{J}$ and $\delta^\mathsf{A}\delta^\mathsf{A}\mathsf{B}_\mathsf{A}=0$ we readily obtain that $\delta^{\mathsf A}\mathsf J=0$.
\end{proof}

\subsection{Interaction with matter fields}\label{sec:particlefieldsinteraction}

The previous setting may be extended to consider gauge fields coupled with matter fields. The matter fields considered here are particular instances of the ones considered in section \ref{sec:particlefields}; namely, we consider sections of some associated bundle. More specifically, the configuration space for non-Abelian gauge fields interacting with matter fields is given by
\begin{equation*}
Q=\mathcal C(P)\times\Omega^0(M,\tilde V),
\end{equation*}
where $\tilde V=P\times_G V\to M$ is the associated bundle defined by a representation $\varrho:G\times V\to V$, with $V$ being a vector space.

\paragraph{Metric and exterior derivative on the associated bundle}

A $\varrho$-invariant inner product on $V$, $\kappa:V\times V\to\mathbb R$, induces a bundle metric on $\tilde V\to M$, which we denote by the same symbol for simplicity, $\kappa:\tilde V\times_M\tilde V\to\mathbb R$. The musical isomorphisms induced by $\kappa$ are denoted by $\sharp_\kappa:\tilde V^*\to\tilde V$ and $\flat_\kappa:\tilde V\to\tilde V^*$, as usual. Furthermore, a fibered inner product on $\tilde V$-valued forms on $M$,
\begin{equation*}
\mathbf g_\kappa:\left(\textstyle\bigwedge^k T^*M\otimes\tilde V\right)\times_M\left(\textstyle\bigwedge^k T^*M\otimes\tilde V\right)\to\mathbb R,
\end{equation*}
may be defined as in \eqref{eq:mathbfg}. The musical isomorphisms induced by $\mathbf g_\kappa$ are given by
\begin{align*}
\boldsymbol{\sharp}_\kappa:\textstyle\bigwedge^k T^*M\otimes\tilde V\to\bigwedge^k TM\otimes\tilde V^*,\qquad & \alpha\otimes\sigma\mapsto\alpha^\sharp\otimes\sigma^{\flat_\kappa},\\
\boldsymbol{\flat}_\kappa:\textstyle\bigwedge^k TM\otimes\tilde V^*\to\bigwedge^k T^*M\otimes\tilde V,\qquad & U\otimes\eta\mapsto U^\flat\otimes\eta^{\sharp_\kappa}.
\end{align*}
At last, the Hodge star operator on $M$ is trivially extended to $\tilde V$-valued forms: for each $\alpha\in\bigwedge^k T^*M$ and $\sigma\in\tilde V$, it is defined as $\boldsymbol\star(\alpha\otimes\sigma)=(\star\alpha)\otimes\sigma$, where $\star:\bigwedge^k T^*M\to\bigwedge^{m-k} T^*M$ denotes the standard Hodge star operator.

\begin{remark}[Notation]\rm
Although this is the same construction carried out in \S\ref{sec:particlefields}, we reserve the symbols $\mathbf g$, $\boldsymbol\sharp$ and $\boldsymbol\flat$ (without subscript) for the particular case where $V=\mathfrak g$ is the Lie algebra of $G$, $\kappa=K$ is the Killing form, and $\varrho$ is the adjoint representation.
\end{remark} 

A principal connection $\mathbf A\in\Omega^1(P,\mathfrak g)$ induces a horizontal exterior derivative ${\rm d}^{\mathbf A}:\Omega^k(P,V)\to\Omega^{k+1}(P,V)$. For basic 1-forms of the adjoint type, its expression is analogous to \eqref{eq:horizontalderivativebasic1forms}; namely,
\begin{equation*}
{\rm d}^{\mathbf A}\eta(u,v)={\rm d}\eta(u,v)+\tilde\varrho(\mathbf A(u),\eta(v))-\tilde\varrho(\mathbf A(v),\eta(u)),\qquad\eta\in\overline \Omega^1(P,V),~u,v\in\mathfrak X(P),
\end{equation*}
where $\tilde\varrho:\mathfrak g\times V\to V$ denotes the infinitesimal action of $\mathfrak g$ on $V$, i.e.,
\begin{equation*}
\tilde\varrho(\sigma,v)=\left.\frac{d}{dt}\right|_{t=0}\varrho\left(\exp(\sigma t),v\right)=(d\varrho_v)_e(\sigma),\qquad\sigma\in\mathfrak g,~v\in V.
\end{equation*}
In turn, when regarded as a section $A\in\mathcal C(P)$, the principal connection induces a linear connection on $\tilde V\to M$ as well as a covariant exterior derivative on the space of $\tilde V$-valued forms on $M$, which we denote by $\nabla^A$ and ${\rm d}^A:\Omega^k(M,\tilde V)\to\Omega^{k+1}(M,\tilde V)$, respectively. The corresponding dual connection and its covariant exterior derivative are denoted by $\nabla^{A*}$ and ${\rm d}^{A*}:\Omega^k(M,\tilde V^*)\to\Omega^{k+1}(M,\tilde V^*)$, respectively. Lastly, the divergence of $\nabla^{A*}$ is denoted by ${\rm div}^{A*}:\Lambda_m^{k+1}(M,\tilde V^*)\to\Lambda_m^k(M,\tilde V^*)$. 

\begin{remark}[Notation]\rm
Note that we are using the same symbols for the linear connections, covariant exterior derivatives and divergence as for the particular case where $\tilde V=\tilde{\mathfrak g}$ is the adjoint bundle (recall \S\ref{sec:geometricsetting_gauge}). The specific meaning of these symbols will be clarified from the context.
\end{remark}

On the other hand, the infinitesimal action of $\mathfrak g$ on $V$ induces a map on the associated bundles, which we denote by the same symbol for simplicity, $\tilde\varrho:\tilde{\mathfrak g}\times_M\tilde V\to\tilde V$. Given $\varphi\in\Omega^0(M,\tilde V)$, we consider the map
\begin{equation*}
\tilde\varrho_\varphi:\Omega^0(M,\tilde{\mathfrak g})\to\Omega^0(M,\tilde V),\quad\xi\mapsto\tilde\varrho(\xi,\varphi),    
\end{equation*}
whose adjoint is denoted by $\tilde\varrho_\varphi^*:\Omega^0(M,\tilde V^*)\to\Omega^0(M,\tilde{\mathfrak g}^*)$. Namely, it satisfies $\eta\cdot\tilde\varrho_\varphi(\xi)=\tilde\varrho_\varphi^*(\eta)\cdot\xi$ for each $\xi\in\Omega^0(M,\tilde{\mathfrak g})$ and $\eta\in\Omega^0(M,\tilde V^*)$. By acting trivially on $TM$ and $T^*M$, the previous maps may be extended to
\begin{equation*}
\tilde\varrho_\varphi:\Lambda_r^s(M,\tilde{\mathfrak g})\to\Lambda_r^s(M,\tilde V),\qquad\tilde\varrho_\varphi^*:\Lambda_r^s(M,\tilde V^*)\to\Lambda_r^s(M,\tilde{\mathfrak g}^*).
\end{equation*}
The identity proven in the following Lemma will be used in the derivation of the Lagrange-Dirac equations.

\begin{lemma}\rm\label{lemma:identityadjoint}
For each $\varphi\in\Omega^0(M,\tilde V)$, $\delta A\in\Omega^1(M,\tilde{\mathfrak g})$ and $\chi\in \Lambda_m^1(M,\tilde V^*)$, we have 
\begin{equation*}
\tilde\varrho_\varphi^*(\chi)\bcdot\delta A=\delta A\bwedge\tilde\varrho_\varphi^*(\Phi_{\tilde V}(\chi)),
\end{equation*}
with $\Phi_{\tilde V}$ defined in \eqref{eq:PhiE}.
\end{lemma}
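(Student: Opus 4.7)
The plan is to reduce the identity to Lemma \ref{lemma:PhiEcontractions} via a simple intertwining argument. Specializing that lemma to $E=\tilde{\mathfrak g}$ and $k=1$, one has
$$\zeta\bcdot\delta A=\delta A\bwedge\Phi_{\tilde{\mathfrak g}}(\zeta)$$
for every $\delta A\in\Omega^1(M,\tilde{\mathfrak g})$ and $\zeta\in\Lambda_m^1(M,\tilde{\mathfrak g}^*)$. Applying this to $\zeta=\tilde\varrho_\varphi^*(\chi)\in\Lambda_m^1(M,\tilde{\mathfrak g}^*)$ immediately gives
$$\tilde\varrho_\varphi^*(\chi)\bcdot\delta A=\delta A\bwedge\Phi_{\tilde{\mathfrak g}}\bigl(\tilde\varrho_\varphi^*(\chi)\bigr),$$
so the lemma reduces to establishing the commutation relation $\Phi_{\tilde{\mathfrak g}}\circ\tilde\varrho_\varphi^*=\tilde\varrho_\varphi^*\circ\Phi_{\tilde V}$ as maps $\Lambda_m^1(M,\tilde V^*)\to\Omega^{m-1}(M,\tilde{\mathfrak g}^*)$.

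The key observation is that these two operations act on disjoint tensor factors. By definition \eqref{eq:PhiE}, $\Phi_E$ only touches the $\bigwedge^k TM\otimes\bigwedge^m T^*M$ factors via the left interior multiplication $U\otimes\mu\mapsto i_U\mu$, while $\tilde\varrho_\varphi^*$ was extended to act trivially on $TM$ and $T^*M$ and only on the coefficient factor ($\tilde V^*$ or $\tilde{\mathfrak g}^*$). Concretely, on an elementary tensor $\chi_x=U_x\otimes\mu_x\otimes\eta_x$ with $U_x\in T_xM$, $\mu_x\in\bigwedge^m T_x^*M$ and $\eta_x\in\tilde V_x^*$, one has
$$\Phi_{\tilde{\mathfrak g}}\bigl(\tilde\varrho_\varphi^*(\chi_x)\bigr)=\Phi_{\tilde{\mathfrak g}}\bigl(U_x\otimes\mu_x\otimes\tilde\varrho_{\varphi(x)}^*\eta_x\bigr)=i_{U_x}\mu_x\otimes\tilde\varrho_{\varphi(x)}^*\eta_x=\tilde\varrho_\varphi^*\bigl(\Phi_{\tilde V}(\chi_x)\bigr),$$
and the general case follows by $C^\infty(M)$-linearity in each factor.

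Combining the two displayed identities yields the claim. There is essentially no obstacle here: the content is bookkeeping about which tensor factor each operation touches, a fact built into the definition of how $\tilde\varrho_\varphi^*$ was extended to tensor fields at the beginning of Section~\ref{sec:particlefieldsinteraction}. As an alternative, one could bypass the commutation step altogether and perform a direct local-coordinate verification along the lines of the proof of Lemma \ref{lemma:PhiEcontractions}, expanding $\chi=\chi_a^i\,\partial_i\otimes d^m x\otimes\eta^a$ in a basis of local sections of $\tilde V^*$ and checking both sides; but the structural argument above is cleaner and emphasizes why the identity holds.
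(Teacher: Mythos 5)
Your proof is correct, but it is organized differently from the paper's. The paper proves the identity by a direct local-coordinate computation on pure elements $\delta A=\alpha_i\,dx^i\otimes\xi$, $\chi=U^i\,\partial_i\otimes d^m x\otimes\eta$, using $dx^i\wedge d_i^{m-1}x=d^m x$ and the defining relation $\tilde\varrho_\varphi^*(\eta)\cdot\xi=\eta\cdot\tilde\varrho_\varphi(\xi)$ — essentially the "alternative" you mention at the end. You instead factor the statement into two pieces: the already-established identity $\zeta\bcdot\delta A=\delta A\bwedge\Phi_{\tilde{\mathfrak g}}(\zeta)$ from Lemma \ref{lemma:PhiEcontractions} (with $E=\tilde{\mathfrak g}$, $k=1$, applied to $\zeta=\tilde\varrho_\varphi^*(\chi)$), and the intertwining relation $\Phi_{\tilde{\mathfrak g}}\circ\tilde\varrho_\varphi^*=\tilde\varrho_\varphi^*\circ\Phi_{\tilde V}$, which holds because $\Phi$ acts only on the $\bigwedge^k TM\otimes\bigwedge^m T^*M$ factors while $\tilde\varrho_\varphi^*$ was extended to act only on the coefficient factor. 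Both steps are verified correctly, and combining them gives exactly the claim. What your route buys is modularity: it avoids repeating the coordinate computation already done for Lemma \ref{lemma:PhiEcontractions}, and it makes clear that the identity holds for \emph{any} vector bundle morphism acting purely on the coefficients, not just $\tilde\varrho_\varphi^*$. What the paper's route buys is self-containedness — three lines of coordinates with no appeal to the commutation observation. There is no gap in your argument.
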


\begin{proof}
Let us use local coordinates as in Remark \ref{remark:multindex}. By linearity, it is enough to prove the result for pure elements: $\delta A=\alpha_i\,dx^i\otimes\xi$ and $\chi=U^i\,\partial_i\otimes d^m x\otimes\eta$ for some (local) functions $\alpha_i,U^i\in C^\infty(M)$, $\xi\in\Omega^0(M,\tilde{\mathfrak g})$ and $\eta\in\Omega^0(M,\tilde V^*)$. The result is now a straightforward computation using \eqref{eq:PhiE}:
\begin{align*}
\tilde\varrho_\varphi^*(\chi)\bcdot\delta A & =U^i\,\alpha_i\left(\tilde\varrho_\varphi^*(\eta)\cdot\xi\right)d^m x=\alpha_i\,U^i\left(\xi\cdot\tilde\varrho_\varphi^*(\eta)\right)\left(dx^i\wedge d_i^{m-1}x\right)\\
& =\delta A\bwedge\tilde\varrho_\varphi^*(U^i\,d_i^{m-1}x\otimes\eta)=\delta A\bwedge\tilde\varrho_\varphi^*(\Phi_{\tilde V}(\chi)).
\end{align*}
\end{proof}

\paragraph{Lagrangian density and Lagrange--Dirac equations}
A Lagrangian density for a non-Abelian gauge theory interacting with matter in the space+time decomposition is a bundle morphism,
\begin{equation*}
\mathscr L_{\rm int}:\operatorname{Conn}(P)\times_M(T^*M\otimes\tilde{\mathfrak g})\times_M\left(\textstyle\bigwedge^2 T^*M\otimes \tilde{\mathfrak g}\right)\times_M\tilde V\times_M \tilde V\times_M\left(T^*M\otimes \tilde V\right)\to\textstyle\bigwedge^m T^*M,
\end{equation*}
of the form
\begin{equation}\label{eq:lagrangiandensity_interaction}
\mathscr L_{\rm int}(A_x,\varepsilon_x,\beta_x,\varphi_x,\nu_x,\zeta_x)=\mathscr L_{\rm gau}(A_x,\varepsilon_x,\beta_x)+\mathscr L_{\rm mat}(\varphi_x,\nu_x,\zeta_x),
\end{equation}
where $\mathscr L_{\rm gau}$ and $\mathscr L_{\rm mat}$ are as in \eqref{eq:lagrangiandensitynonabelian} and \eqref{Lagrangian_particle} (or \eqref{Lagrangian_density_bundle} with $k=0$), respectively. The corresponding Lagrangian is the map $L_{\rm int}:TQ\to\mathbb R$ given by
\begin{equation}\label{eq:lagrangianinteraction}
\begin{aligned}
L_{\rm int}(A,\varepsilon,\varphi,\nu) & =\int_M\mathscr L_{\rm int}(A,\varepsilon,B_A,\varphi,\nu,{\rm d}^A\varphi)\\
& =\int_M\left(\mathscr L_{\rm gau}(A,\varepsilon,B_A)+\mathscr L_{\rm mat}\left(\varphi,\nu,{\rm d}^A\varphi\right)\right)
\end{aligned}
\end{equation}
for each $(A,\varepsilon,\varphi,\nu)\in TQ\simeq\mathcal C(P)\times\Omega^1(M,\tilde{\mathfrak g})\times T\Omega^0(M, \tilde V)$. The interaction arises from the dependence on $A$ in the exterior covariant derivative of the field in $\mathscr{L}_{\rm mat}$.

\begin{lemma}\rm
For each $(A,\varepsilon,\varphi,\nu)\in TQ$, the fiber derivatives of the Lagrangian \eqref{eq:lagrangianinteraction} are given as follows (we omit the arguments for brevity):
\begin{enumerate}[(i)]
    \item When working with $T^\star Q$, they read
    \begin{align*}
    \frac{\delta L_{\rm int}}{\delta A} & =\left(\frac{\partial\mathscr L_{\rm gau}}{\partial A}+\operatorname{div}^{A*}\left(\frac{\partial\mathscr L_{\rm gau}}{\partial\beta}\right)+\tilde\varrho_\varphi^*\left(\frac{\partial\mathscr L_{\rm mat}}{\partial\zeta}\right),~\iota_{\partial M}^*\left(\tr \frac{\partial\mathscr L_{\rm gau}}{\partial\beta}\right)\right),\\
    \frac{\delta L_{\rm int}}{\delta\varepsilon} & =\left(\frac{\partial\mathscr L_{\rm gau}}{\partial\varepsilon},~0\right),\\
    \frac{\delta L_{\rm int}}{\delta\varphi} & =\left(\frac{\partial\mathscr L_{\rm mat}}{\partial\varphi}-\operatorname{div}^{A*} \left( \frac{\partial\mathscr L_{\rm mat}}{\partial\zeta}\right) ,~\iota_{\partial M}^*\left(\tr\frac{\partial\mathscr L_{\rm mat}}{\partial\zeta}\right)\right),\\
    \frac{\delta L_{\rm int}}{\delta\nu } & =\left(\frac{\partial\mathscr L_{\rm mat}}{\partial \nu },~0\right).
    \end{align*}
    \item When working with $T^\dagger Q$, they read
    \begin{align*}
    \frac{\delta L_{\rm int}}{\delta A} & =\left(\frac{\partialnew\mathscr L_{\rm gau}}{\partialnew A}+{\rm d}^{A*}\left(\frac{\partialnew\mathscr L_{\rm gau}}{\partialnew\beta}\right)+\tilde\varrho_\varphi^*\left(\frac{\partialnew\mathscr L_{\rm mat}}{\partialnew\zeta}\right) ,~\iota_{\partial M}^*\left(\frac{\partialnew\mathscr L_{\rm gau}}{\partialnew\beta}\right)\right),\\
    \frac{\delta L_{\rm int}}{\delta\varepsilon} & =\left(\frac{\partialnew\mathscr L_{\rm gau}}{\partialnew\varepsilon},~0\right),\\
    \frac{\delta L_{\rm int}}{\delta\varphi} & =\left(\frac{\partialnew\mathscr L_{\rm mat}}{\partialnew\varphi}-{\rm d}^{A*} \left( \frac{\partialnew\mathscr L_{\rm mat}}{\partialnew\zeta}\right) ,~\iota_{\partial M}^*\left(\frac{\partialnew\mathscr L_{\rm mat}}{\partialnew\zeta}\right)\right),\\
    \frac{\delta L_{\rm int}}{\delta\nu } & =\left(\frac{\partialnew\mathscr L_{\rm mat}}{\partialnew \nu },~0\right).
    \end{align*}
\end{enumerate}
\end{lemma}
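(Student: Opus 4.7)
The plan is to split the computation into two independent parts and then track the cross-dependency that produces the coupling. Because $L_{\rm int}$ decomposes as the sum of a gauge piece (depending only on $(A,\varepsilon)$) and a matter piece (depending on $(\varphi,\nu)$ but also on $A$ through the covariant exterior derivative $\zeta={\rm d}^A\varphi$), each partial functional derivative is the sum of the corresponding derivatives computed in the earlier lemmas, possibly with an extra contribution coming from this implicit dependence on $A$.

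First I would handle the straightforward pieces. The derivatives $\delta L_{\rm int}/\delta\varepsilon$ and $\delta L_{\rm int}/\delta\nu$ involve only variations of variables that appear in one of the two summands, so Lemma \ref{lemma:partialderivativegauge} (applied to $\mathscr L_{\rm gau}$) and Lemma \ref{lemma:partialderivativevectorbundlekforms} with $k=0$ (applied to $\mathscr L_{\rm mat}$) give the stated expressions directly. Likewise, $\delta L_{\rm int}/\delta\varphi$ is obtained from Lemma \ref{lemma:partialderivativevectorbundlekforms} applied to $\mathscr L_{\rm mat}$, since $\mathscr L_{\rm gau}$ is independent of $\varphi$.

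The only nontrivial step is the computation of $\delta L_{\rm int}/\delta A$, which splits as
\[
\frac{\delta L_{\rm int}}{\delta A}(A,\varepsilon,\varphi,\nu)(\delta A)=\left.\frac{d}{d\epsilon}\right|_{\epsilon=0}\!\int_M\mathscr L_{\rm gau}(A+\epsilon\delta A,\varepsilon,B_{A+\epsilon\delta A})+\left.\frac{d}{d\epsilon}\right|_{\epsilon=0}\!\int_M\mathscr L_{\rm mat}(\varphi,\nu,{\rm d}^{A+\epsilon\delta A}\varphi).
\]
The first term reproduces the $\mathscr L_{\rm gau}$ contribution already computed in Lemma \ref{lemma:partialderivativegauge}. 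The key identity for the second term is that, for $\varphi\in\Omega^0(M,\tilde V)$, one has
\[
\left.\frac{d}{d\epsilon}\right|_{\epsilon=0}{\rm d}^{A+\epsilon\delta A}\varphi=\tilde\varrho_\varphi(\delta A),
\]
which follows by writing $\nabla^A\varphi$ locally as $d\varphi+\tilde\varrho(A,\varphi)$ (so that the term linear in $A$ contributes exactly $\tilde\varrho(\delta A,\varphi)=\tilde\varrho_\varphi(\delta A)$). Inserting this into the integrand and using the definition of the adjoint $\tilde\varrho_\varphi^*$ yields the interaction term
\[
\int_M\frac{\partial\mathscr L_{\rm mat}}{\partial\zeta}\bcdot\tilde\varrho_\varphi(\delta A)=\int_M\tilde\varrho_\varphi^*\!\left(\frac{\partial\mathscr L_{\rm mat}}{\partial\zeta}\right)\!\bcdot\delta A,
\]
giving case (i). For case (ii), the same computation is performed using the pairing $\blangle\cdot,\cdot\brangle_\dagger$; the wedge version of the coupling term arises by applying Lemma \ref{lemma:identityadjoint} with $\chi=\partial\mathscr L_{\rm mat}/\partial\zeta$, noting that $\Phi_{\tilde V}\circ\partial\mathscr L_{\rm mat}/\partial\zeta=\partialnew\mathscr L_{\rm mat}/\partialnew\zeta$ by the relation between the two realisations of the fiber derivatives established at the end of Section \ref{sec:partialderivativesvectorkforms}.

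The main obstacle is purely bookkeeping: one must be careful that the $\tilde\varrho_\varphi^*$-contribution ends up in the interior component of $\delta L_{\rm int}/\delta A$ rather than in the boundary component, which it does because $\varphi\in\Omega^0(M,\tilde V)$ has no covariant exterior derivative of its own requiring integration by parts with respect to $A$, so no boundary integral is produced by this cross-term. The boundary pieces of $\delta L_{\rm int}/\delta A$ and $\delta L_{\rm int}/\delta\varphi$ are therefore exactly those inherited from $\mathscr L_{\rm gau}$ and $\mathscr L_{\rm mat}$ separately, via Proposition \ref{prop:covariantdivergence} as in the proofs of Lemmas \ref{lemma:partialderivativevectorbundlekforms} and \ref{lemma:partialderivativegauge}.
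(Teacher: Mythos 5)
Your proposal is correct and follows essentially the same route as the paper: reduce everything to Lemmas \ref{lemma:partialderivativevectorbundlekforms} and \ref{lemma:partialderivativegauge}, isolate the cross-term in $\delta L_{\rm int}/\delta A$ via the identity $\left.\tfrac{d}{d\epsilon}\right|_{\epsilon=0}{\rm d}^{A+\epsilon\delta A}\varphi=\tilde\varrho_\varphi(\delta A)$, pass to the adjoint $\tilde\varrho_\varphi^*$, and convert between the $\star$ and $\dagger$ realisations with Lemma \ref{lemma:identityadjoint}. The only cosmetic difference is that you justify the key variation identity from the local expression of $\nabla^A\varphi$ on the base, whereas the paper derives it upstairs on $P$ from the formula for ${\rm d}^{\mathbf A}\boldsymbol\varphi$; both are valid, and your added observation that the cross-term requires no integration by parts (hence contributes nothing to the boundary component) is a correct and worthwhile clarification.
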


\begin{proof}
The result is a straightforward consequence of Lemmas \ref{lemma:partialderivativevectorbundlekforms} and \ref{lemma:partialderivativegauge}, as well as the following computation: let $\delta A\in\Omega^1(M,\tilde{\mathfrak g})$, and denote by $\delta\mathbf A\in\overline\Omega^1(P,\mathfrak g)$ and $\boldsymbol\varphi\in\overline\Omega^0(P, V)= C_G^\infty(P, V)$ the basic 1-form of the adjoint type and the $\varrho$-invariant function corresponding to $\delta A$ and $\varphi$, respectively. We have
\begin{align*}
\left.\frac{d}{d\epsilon}\right|_{\epsilon=0}{\rm d}^{\mathbf A+\epsilon\delta\mathbf A}\boldsymbol\varphi & =\left.\frac{d}{d\epsilon}\right|_{\epsilon=0}{\rm d}\boldsymbol\varphi+ \tilde\varrho(\mathbf A+\epsilon\delta\mathbf A,\boldsymbol\varphi)=\tilde\varrho(\delta\mathbf A,\boldsymbol\varphi),
\end{align*}
where we have used a modification of the formula for ${\rm d}^{\mathbf A}\boldsymbol\varphi$ given in \cite[\S 2.3]{GBRa2008} to account for a general $\varrho$ instead of the adjoint representation. Hence, $d/d\epsilon|_{\epsilon=0}{\rm d}^{A+\epsilon\delta A}\varphi= \tilde\varrho_{\varphi}(\delta A)$ and, thus,
\begin{align*}
\left.\frac{d}{d\epsilon}\right|_{\epsilon=0}\int_M\mathscr L_{\rm mat}(\varphi,\nu,{\rm d}^{A+\epsilon\delta A}\varphi)=\int_M\frac{\partial\mathscr L_{\rm mat}}{\partial\zeta}\bcdot \tilde\varrho_\varphi(\delta A)=\int_M \tilde\varrho_\varphi^*\left(\frac{\partial\mathscr L_{\rm mat}}{\partial\zeta}\right)\bcdot\delta A.
\end{align*}
We conclude by using Lemma \ref{lemma:identityadjoint}.
\end{proof}

\medskip

By gathering the previous lemma with Propositions \ref{proposition:LDVectBundValued_kforms} and \ref{proposition:LDnonabelian}, we obtain the forced Lagrange--Dirac equations for non-Abelian gauge theories interacting with matter fields.

\begin{proposition}\rm
Associated to each choice of the restricted dual space and denoting $E=-\varepsilon$, the following statements hold:
\begin{enumerate}[(i)]
    \item The forced Lagrange--Dirac equations for a curve
    \begin{equation*}
    (A,\varepsilon,\varsigma,\varsigma_\partial,\varphi,\nu,\alpha,\alpha_\partial):[t_0,t_1]\to TQ\oplus T^\star Q
    \end{equation*}
    are given by
    \begin{equation}\label{LDequations_interaction_star}
    \left\{\begin{array}{ll}
    \dot A=-E,& \dot\varphi=\nu,\vspace{4mm}\\
    \displaystyle\varsigma=\frac{\partial\mathscr L_{\rm gau}}{\partial\varepsilon},\; & \displaystyle\dot\varsigma=\frac{\partial\mathscr L_{\rm gau}}{\partial A}+\operatorname{div}^{A*} \left(\frac{\partial\mathscr L_{\rm gau}}{\partial\beta}\right)+ \tilde\varrho_\varphi^*\left(\frac{\partial\mathscr L_{\rm mat}}{\partial\zeta}\right) +\hat{\mathcal F}^{\star},\vspace{2mm}\\
    \varsigma_\partial=0, & \displaystyle\dot\varsigma_\partial=\iota_{\partial M}^*\left(\tr \frac{\partial\mathscr L_{\rm gau}}{\partial\beta}\right)+\hat{\mathcal F}^{\star}_{\partial},\vspace{2mm}\\
    \displaystyle\alpha=\frac{\partial\mathscr L_{\rm mat}}{\partial\nu},\; & \displaystyle\dot\alpha=\frac{\partial\mathscr L_{\rm mat}}{\partial\varphi}-\operatorname{div}^{A*} \left( \frac{\partial\mathscr L_{\rm mat}}{\partial\zeta}\right)+\mathcal F^{\star},\vspace{2mm}\\
    \alpha_\partial=0, & \displaystyle\dot\alpha_\partial=\iota_{\partial M}^*\left(\tr \frac{\partial\mathscr L_{\rm mat}}{\partial\zeta}\right)+\mathcal F^{\star}_{\partial}.
    \end{array}\right.
    \end{equation}
    \item The forced Lagrange--Dirac equations for a curve
    \begin{equation*}
    (A,\varepsilon,\varsigma,\varsigma_\partial,\varphi,\nu,\alpha,\alpha_\partial):[t_0,t_1]\to TQ\oplus T^\dagger Q
    \end{equation*}
    are given by
    \begin{equation}\label{LDequations_interaction_dagger}
    \left\{\begin{array}{ll}
    \dot A=-E,& \dot\varphi=\nu\vspace{0.4cm}\\
    \displaystyle\varsigma=\frac{\partialnew\mathscr L_{\rm gau}}{\partialnew\varepsilon},\; & \displaystyle\dot\varsigma=\frac{\partialnew\mathscr L_{\rm gau}}{\partialnew A}+{\rm d}^{A*}\left( \frac{\partialnew\mathscr L_{\rm gau}}{\partialnew\beta}\right)+ \tilde\varrho_\varphi^*\left(\frac{\partialnew\mathscr L_{\rm mat}}{\partialnew\zeta}\right)+\hat{\mathcal F}^\dagger,\vspace{0.2cm}\\
    \varsigma_\partial=0, & \displaystyle\dot\varsigma_\partial=\iota_{\partial M}^*\left(\frac{\partialnew\mathscr L_{\rm gau}}{\partialnew\beta}\right)+\hat{\mathcal F}_\partial^\dagger,\vspace{2mm}\\
    \displaystyle\alpha=\frac{\partialnew\mathscr L_{\rm mat}}{\partialnew\nu},\; & \displaystyle\dot\alpha=\frac{\partialnew\mathscr L_{\rm mat}}{\partialnew\varphi}-{\rm d}^{A*} \left( \frac{\partialnew\mathscr L_{\rm mat}}{\partialnew\zeta}\right)+\mathcal F^\dagger,\vspace{0.2cm}\\
    \alpha_\partial=0, & \displaystyle\dot\alpha_\partial=\iota_{\partial M}^*\left(\frac{\partialnew\mathscr L_{\rm mat}}{\partialnew\zeta}\right)+\mathcal F^\dagger_{\partial}.
    \end{array}\right.
    \end{equation}
\end{enumerate}
\end{proposition}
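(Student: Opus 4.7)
The plan is to piece the result together from the two building blocks already proven: the gauge case (Proposition \ref{proposition:LDnonabelian}) and the matter case (Proposition \ref{proposition:LDVectBundValued_kforms}). Since the configuration manifold $Q=\mathcal C(P)\times\Omega^0(M,\tilde V)$ is a product of an affine space and a Fréchet vector space, the restricted cotangent bundle splits as $T^\star Q\simeq T^\star\mathcal C(P)\times_Q T^\star\Omega^0(M,\tilde V)$ (and similarly for $T^\dagger Q$), and consequently so do the canonical one- and two-forms, the flat map $\Omega^\flat_{T^\star Q}$, the Tulczyjew isomorphism $\gamma_{T^\star Q}$, and therefore the canonical Dirac structure $D_{T^\star Q}$ of Definition \ref{def:diracstructurebundle}. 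In particular, the inclusion condition in \eqref{LDequations_interaction_star} decouples into a $(A,\varepsilon,\varsigma,\varsigma_\partial)$-block and a $(\varphi,\nu,\alpha,\alpha_\partial)$-block whose form is dictated by the Dirac structures on $T^\star\mathcal C(P)$ and $T^\star\Omega^0(M,\tilde V)$ respectively.

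The second step is to assemble the Dirac differential ${\rm d}_D^\star L_{\rm int}=\gamma_{TQ}\circ{\rm d}L_{\rm int}$ (respectively ${\rm d}_D^\dagger L_{\rm int}$) componentwise, invoking the functional derivatives already computed in the lemma preceding the proposition. This is where the coupling enters: the only non-block-diagonal contribution is the $A$-derivative of $\mathscr L_{\rm mat}(\varphi,\nu,{\rm d}^A\varphi)$, which the lemma identifies as $\tilde\varrho_\varphi^*\bigl(\partial\mathscr L_{\rm mat}/\partial\zeta\bigr)$ (resp. $\tilde\varrho_\varphi^*\bigl(\partialnew\mathscr L_{\rm mat}/\partialnew\zeta\bigr)$), arising from $d/d\epsilon|_0\,{\rm d}^{A+\epsilon\delta A}\varphi=\tilde\varrho_\varphi(\delta A)$ together with Lemma \ref{lemma:identityadjoint}. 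Analogously the Lagrangian force field decomposes: writing $F^\star=(\hat F^\star,F^\star)$ and $F^\dagger=(\hat F^\dagger,F^\dagger)$ and applying Definition \ref{ExtForce_VectBundleValuedkForms} separately to each factor yields the four force terms $\hat{\mathcal F}^\star,\hat{\mathcal F}^\star_\partial,\mathcal F^\star,\mathcal F^\star_\partial$ (resp. $\dagger$) appearing in \eqref{LDequations_interaction_star}–\eqref{LDequations_interaction_dagger}.

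The third step is routine unpacking. Substituting the explicit expression of ${\rm d}_D^\star L_{\rm int}-\widetilde F^\star$ into the Dirac condition and using the graph characterisation of $D_{T^\star Q}$ produces, in the gauge block, the kinematic identity $\dot A=\varepsilon$, the Legendre relations $\varsigma=\partial\mathscr L_{\rm gau}/\partial\varepsilon$, $\varsigma_\partial=0$, and the evolution equations
\[
\dot\varsigma=\frac{\partial\mathscr L_{\rm gau}}{\partial A}+\operatorname{div}^{A*}\!\Big(\frac{\partial\mathscr L_{\rm gau}}{\partial\beta}\Big)+\tilde\varrho_\varphi^*\!\Big(\frac{\partial\mathscr L_{\rm mat}}{\partial\zeta}\Big)+\hat{\mathcal F}^\star,\qquad \dot\varsigma_\partial=\iota_{\partial M}^*\Big(\tr\frac{\partial\mathscr L_{\rm gau}}{\partial\beta}\Big)+\hat{\mathcal F}^\star_\partial,
\]
while the matter block reproduces verbatim the equations in \eqref{EqMotionForcedLagDirac_VectBundleValuedkforms_Star} applied to $\mathscr L_{\rm mat}$ with the connection-dependent derivative ${\rm d}^A\varphi$ replacing ${\rm d}^\nabla\varphi$. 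The $\dagger$-case is identical after replacing $\partial/\partial$ by $\partialnew/\partialnew$ and $\operatorname{div}^{A*},\tr$ by ${\rm d}^{A*},\iota_{\partial M}^*$ throughout, as in Lemma \ref{lemma:PhiEcontractions} and Proposition \ref{isomorphic_duals}.

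There is no genuine obstacle here; the only subtlety worth writing out is the identification of the interaction term, which is already isolated in the preceding lemma via Lemma \ref{lemma:identityadjoint}. Everything else is bookkeeping on the product structure, and the proof may simply refer to the relevant parts of Propositions \ref{proposition:LDVectBundValued_kforms} and \ref{proposition:LDnonabelian}.
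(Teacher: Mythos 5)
Your proposal is correct and follows essentially the same route as the paper, which likewise obtains the result by combining the functional-derivative lemma for $L_{\rm int}$ (where the sole coupling term $\tilde\varrho_\varphi^*\bigl(\partial\mathscr L_{\rm mat}/\partial\zeta\bigr)$ is isolated via Lemma \ref{lemma:identityadjoint}) with the block structure of Propositions \ref{proposition:LDVectBundValued_kforms} and \ref{proposition:LDnonabelian}. Your explicit justification of the product splitting of the canonical Dirac structure on $T^\star Q$ is a detail the paper leaves implicit, but it is the same argument.
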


\begin{remark}[Lagrange--d'Alembert form]\rm As before, by eliminating the variables $\varsigma(t)$, $\varsigma_\partial(t)$, $\varepsilon(t)$ and $\alpha(t)$, $\alpha_\partial(t)$, $\nu(t)$ in 
\eqref{LDequations_interaction_star} and \eqref{LDequations_interaction_dagger} we get the equations in \textit{Lagrange--d'Alembert form} as follows
\begin{equation*}
\left\{\begin{array}{l}
\displaystyle\frac{\partial }{\partial t}\frac{\partial\mathscr L_{\rm gau}}{\partial\varepsilon}=\frac{\partial\mathscr L_{\rm gau}}{\partial A}+\operatorname{div}^{ A_*}\left(\frac{\partial\mathscr L_{\rm gau}}{\partial\beta}\right) + \tilde\varrho_\varphi^* \left(\frac{\partial\mathscr L_{\rm mat}}{\partial\zeta}\right)+\hat{\mathcal{F}}^\star,\vspace{2mm}\\
\displaystyle\frac{\partial }{\partial t}\frac{\partial\mathscr L_{\rm mat}}{\partial\nu}=\frac{\partial\mathscr L_{\rm mat}}{\partial \varphi}-\operatorname{div}^{ A_*}\left(\frac{\partial\mathscr L_{\rm mat}}{\partial\zeta}\right)+\mathcal{F}^\star,\vspace{2mm}\\
\displaystyle\hat{\mathcal{F}}_\partial^\star =-\iota_{\partial M}^*\left(\tr\frac{\partial\mathscr L_{\rm gau}}{\partial\beta}\right), \qquad \mathcal{F}_\partial^\star =-\iota_{\partial M}^*\left(\tr\frac{\partial\mathscr L_{\rm mat}}{\partial\zeta}\right),
\end{array}\right.
\end{equation*}
and
\begin{equation*}
\left\{\begin{array}{l}
\displaystyle\frac{\partial }{\partial t}\frac{\partialnew\mathscr L_{\rm gau}}{\partialnew\varepsilon}=\frac{\partialnew\mathscr L_{\rm gau}}{\partialnew A}+{\rm d}^{A_*} \left( \frac{\partialnew\mathscr L_{\rm gau}}{\partialnew\beta} \right)
+ \tilde\varrho_\varphi^*\left( \frac{\partialnew\mathscr L_{\rm mat}}{\partialnew\zeta} \right) +\hat{\mathcal{F}}^{\dagger},\vspace{2mm}\\
\displaystyle\frac{\partial }{\partial t}\frac{\partialnew\mathscr L_{\rm mat}}{\partialnew\nu}=\frac{\partialnew\mathscr L_{\rm mat}}{\partialnew \varphi}-{\rm d}^{A_*} \left( \frac{\partialnew\mathscr L_{\rm mat}}{\partialnew\zeta} \right)
+\mathcal{F}^{\dagger},\vspace{2mm}\\
\displaystyle\hat{\mathcal{F}}^{\dagger}_\partial=-\iota_{\partial M}^*\left(\frac{\partialnew\mathscr L_{\rm gau}}{\partialnew\beta}\right),\qquad \mathcal{F}^{\dagger}_\partial=-\iota_{\partial M}^*\left(\frac{\partialnew\mathscr L_{\rm mat}}{\partialnew\zeta}\right).
\end{array}\right.
\end{equation*}
\end{remark}

\paragraph{Energy and interaction} The total energy density is given by the sum $\mathscr{E}_{\rm tot}= \mathscr{E}_{\rm gau}+ \mathscr{E}_{\rm mat}$, which represents the energy contributions from the gauge field and the matter field, see \eqref{eq:lagrangiandensity_interaction} and \eqref{eq:lagrangianinteraction}. 
It is instructive to first examine the local energy balance for each component separately. Focusing on the description of the restricted dual given by $T^\dagger Q$ we obtain:
\[
\frac{\partial }{\partial t}\mathscr{E}_{\rm gau}=-{\rm d}\left(\varepsilon\bwedge\frac{\partialnew\mathscr L_{\rm gau}}{\partialnew\beta}\right)+ \dot A \bwedge  \tilde\varrho_\varphi^*\left( \frac{\partialnew\mathscr L_{\rm mat}}{\partialnew\zeta} \right)+\varepsilon\bwedge\hat{\mathcal F}^\dagger
\]
\[
\frac{\partial }{\partial t}\mathscr{E}_{\rm mat}=-{\rm d}\left(\nu\bwedge\frac{\partialnew\mathscr L_{\rm mat}}{\partialnew\zeta}\right) - \tilde{\varrho}_\varphi(\dot A) \bwedge \frac{\partialnew\mathscr L_{\rm mat}}{\partialnew\zeta}+\nu\bwedge\mathcal F^\dagger.
\]
These expression highlight the presence of the interaction terms (specifically, the second terms on the right-hand sides), which cancel out in the total energy balance equation for $\mathscr{E}_{\rm tot}$. The local and global energy balances involve the total energy flux density, described either as the vector field density $S_{\rm tot}=\frac{\partial \mathscr{L}_{\rm gau}}{\partial \beta}\bcdot \dot A + \frac{\partial \mathscr{L}_{\rm mat}}{\partial \zeta}\bcdot \dot \varphi$, or as the $(m-1)$-form $\mathsf{S}_{\rm tot}=\dot A\bwedge \frac{\partialnew\mathscr{L}_{\rm gau}}{\partialnew \beta}+\dot\varphi\bwedge \frac{\partialnew\mathscr{L}_{\rm mat}}{\partialnew \zeta}$, as well as the spatially distributed and boundary contributions arising from both the gauge and matter sectors, see Propositions \ref{prop:energybalancevectorbundle} and \ref{prop:energybalancegauge}. These will be detailed further in the context of the Yang--Mills--Higgs system below.

\subsection{Yang--Mills--Higgs equations}\label{sec:YMHeqs}

We give here an important application of the interaction of the matter and gauge fields described previously. We consider the geometric setting introduced in Section \ref{sec:YMlagrangiandensity}.

\begin{definition}\rm
The \emph{Yang--Mills--Higgs Lagrangian density} is the Lagrangian density \eqref{eq:lagrangiandensity_interaction} with
\begin{enumerate}[(i)]
    \item $\mathscr L_{\rm gau}(A_x,\varepsilon_x,\beta_x)=\mathscr L_{\rm YM}(A_x,\varepsilon_x,\beta_x)=\displaystyle\frac{1}{2}\left(\mathbf g\left(\varepsilon_x,\varepsilon_x\right)-\mathbf g\left(\beta_x,\beta_x\right)\right)\mu_g$ (recall \eqref{split_Lagrangian_density}).
    \item $\mathscr L_{\rm mat}(\varphi_x,\nu_x,\zeta_x)=\left(\displaystyle\frac{1}{2}\mathbf g_{ \kappa}(\nu_x,\nu_x)-\frac{1}{2}\mathbf g_{\kappa}(\zeta_x,\zeta_x)-\mathbf V(\varphi_x)\right)\mu_g$ for some potential $\mathbf V:\tilde V\to\mathbb R$.
\end{enumerate}
\end{definition}

The choice $\mathbf V(\varphi_x)=m\,\mathbf g_{ \kappa}(\varphi_x,\varphi_x)$ yields the \emph{Klein--Gordon field} of mass $m>0$, whilst $\mathbf V(\varphi_x)=\lambda\,\mathbf g_{ \kappa}(\varphi_x,\varphi_x)^2-\mu_H\,\mathbf g_{\kappa}(\varphi_x,\varphi_x)$ yields the \emph{Higgs field} of mass $m_H=\sqrt{2\,\mu_H}>0$.

The partial derivative of the potential is defined in the same vein as the functional derivatives of the Lagrangian density. More specifically, it is the bundle morphism $\partial\mathbf V/\partial\varphi:\tilde V\to \tilde V^*$ given by
\begin{equation*}
\frac{\partial\mathbf V}{\partial\varphi}(\varphi_x)\cdot \delta\varphi_x
=\left.\frac{d}{d\epsilon}\right|_{\epsilon=0}\mathbf V(\varphi_x+\epsilon\delta\varphi_x),\qquad\varphi_x,\delta\varphi_x\in\tilde V_x,~x\in M.
\end{equation*}
Moreover, we denote $\operatorname{grad}_{\kappa}\mathbf V=\sharp_{\kappa}\circ\partial\mathbf V/\partial\varphi: \tilde V\to \tilde V$. The following result is straightforward by noting that the Lagrangian density for the matter field may be equivalently written as $\mathscr L_{\rm mat}(\varphi_x,\nu_x,\zeta_x)=\displaystyle\frac{1}{2}\nu_x^{\flat_\kappa}\wedge\boldsymbol\star\nu_x-\frac{1}{2}\zeta_x^{\flat_\kappa}\wedge\boldsymbol\star\zeta_x-\star\mathbf V(\varphi_x)$.

\begin{lemma}\rm
For each $(\varphi,\dot\varphi)\in T\Omega^0(M, \tilde V)$, by denoting $\zeta={\rm d}^A\varphi\in\Omega^1(M,\tilde V)$, the partial derivatives of the Lagrangian density for a matter field are given by
\begin{align*}
& \frac{\partial\mathscr L_{\rm mat}}{\partial\varphi}=-\frac{\partial\mathbf V}{\partial\varphi}\otimes\mu_g, && \frac{\partial\mathscr L_{\rm mat}}{\partial\nu}=\dot\varphi^{\boldsymbol\sharp_{\kappa}}\otimes\mu_g, && \frac{\partial\mathscr L_{\rm mat}}{\partial\zeta}=-\left({\rm d}^A\varphi\right)^{\boldsymbol\sharp_{\kappa}}\otimes\mu_g,\\
& \frac{\partialnew\mathscr L_{\rm mat}}{\partialnew\varphi}=-\boldsymbol\star\frac{\partial\mathbf V}{\partial\varphi}, && \frac{\partialnew\mathscr L_{\rm mat}}{\partialnew\nu}=\boldsymbol\star\dot\varphi^{\flat_{\kappa}}, && \frac{\partialnew\mathscr L_{\rm mat}}{\partialnew\zeta}=-\boldsymbol\star\left({\rm d}^A\varphi\right)^{\flat_{\kappa}}.
\end{align*}
\end{lemma}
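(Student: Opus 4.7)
The plan is to compute each of the six fiber derivatives directly from their definitions, exploiting the equivalent integrand representation noted just before the statement, namely
\begin{equation*}
\mathscr L_{\rm mat}(\varphi_x,\nu_x,\zeta_x)=\tfrac{1}{2}\nu_x^{\flat_\kappa}\bwedge\boldsymbol\star\nu_x-\tfrac{1}{2}\zeta_x^{\flat_\kappa}\bwedge\boldsymbol\star\zeta_x-\star\mathbf V(\varphi_x).
\end{equation*}
The first step is to record the pointwise identity $\mathbf g_\kappa(\sigma_1,\sigma_2)\mu_g=\sigma_1^{\flat_\kappa}\bwedge\boldsymbol\star\sigma_2$ for $\sigma_1,\sigma_2\in\bigwedge^k T_x^*M\otimes\tilde V_x$, which is just the $\tilde V$-valued analogue of the defining property of $\boldsymbol\star$ recalled in \eqref{bold_sharp_flat}ff. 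This identity is what makes the two realizations of the fiber derivatives agree via the isomorphism $\Phi_{\tilde V}$ from Lemma \ref{lemma:invPhiE}.

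Next, for the derivative with respect to $\varphi$, only the potential contributes, and one has
\begin{equation*}
\left.\tfrac{d}{d\epsilon}\right|_{\epsilon=0}\mathscr L_{\rm mat}(\varphi_x+\epsilon\delta\varphi_x,\nu_x,\zeta_x)=-\tfrac{\partial\mathbf V}{\partial\varphi}(\varphi_x)\cdot\delta\varphi_x\,\mu_g.
\end{equation*}
Reading this against the defining pairings $\bcdot$ and $\bwedge$ yields the two expressions in the statement, since for $0$-forms with values in $\tilde V$ the map $\Phi_{\tilde V}$ reduces to tensoring with $\boldsymbol\star$ on the scalar factor. For the derivative with respect to $\nu$ one uses the symmetry of the inner product to obtain $\mathbf g_\kappa(\nu_x,\delta\nu_x)\mu_g$, and the identity from the first step then gives both $\dot\varphi^{\boldsymbol\sharp_\kappa}\otimes\mu_g$ (via the contraction pairing) and $\boldsymbol\star\dot\varphi^{\flat_\kappa}$ (via the wedge pairing). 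The derivative with respect to $\zeta$ is analogous, with an opposite sign coming from the minus in front of the kinetic term for $\zeta$, and with $\zeta={\rm d}^A\varphi$ substituted at the end.

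There is no real obstacle here; the computation is entirely parallel to the one carried out for matter fields in \S\ref{sec:particlefields}, the only difference being that the vector bundle $E$ is replaced by the associated bundle $\tilde V$ and the bundle metric $\kappa$ by the induced $\varrho$-invariant metric on $\tilde V$. The only point requiring a small amount of care is the bookkeeping of the musical isomorphisms $\boldsymbol\sharp_\kappa,\boldsymbol\flat_\kappa$ and of the fact that $\boldsymbol\star$ commutes with $\flat_\kappa$ and $\sharp_\kappa$; once this is observed, the six identities follow by inspection.
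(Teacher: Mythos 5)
Your proposal is correct and follows exactly the route the paper intends: the paper gives no proof beyond noting the rewriting $\mathscr L_{\rm mat}=\tfrac{1}{2}\nu^{\flat_\kappa}\bwedge\boldsymbol\star\nu-\tfrac{1}{2}\zeta^{\flat_\kappa}\bwedge\boldsymbol\star\zeta-\star\mathbf V(\varphi)$ and referring implicitly to the identical computation for matter fields in \S\ref{sec:particlefields}, and your direct evaluation of the six fiber derivatives via the identity $\mathbf g_\kappa(\sigma_1,\sigma_2)\mu_g=\sigma_1^{\flat_\kappa}\bwedge\boldsymbol\star\sigma_2$ and the relation $\partialnew\mathscr L/\partialnew(\cdot)=\Phi_{\tilde V}\circ\partial\mathscr L/\partial(\cdot)$ is precisely that computation with $E$ replaced by $\tilde V$ and $\nabla$ by $\nabla^A$. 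No gaps.
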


The forced Lagrange--Dirac equations \eqref{LDequations_interaction_star} for the Yang--Mills--Higgs Lagrangian density read
\begin{equation*}
\left\{\begin{array}{ll}
\dot A=-E,& \dot\varphi=\nu,\vspace{4mm}\\
\displaystyle\varsigma=-E^{\boldsymbol\sharp}\otimes\mu_g,\; & \displaystyle\dot\varsigma=-\operatorname{div}^{A*} \left(B_A^{\boldsymbol\sharp}\otimes\mu_g\right)-\tilde\varrho_\varphi^*\left(({\rm d}^A\varphi)^{\boldsymbol\sharp_\kappa}\otimes\mu_g\right) +\hat{\mathcal F}^{\star},\vspace{2mm}\\
\varsigma_\partial=0, & \displaystyle\dot\varsigma_\partial=-\iota_{\partial M}^*\left(\tr\left(B_A^{\boldsymbol\sharp}\otimes\mu_g\right)\right)+\hat{\mathcal F}^{\star}_{\partial},\vspace{2mm}\\
\displaystyle\alpha=\dot\varphi^{\boldsymbol\sharp}\otimes\mu_g,\; & \displaystyle\dot\alpha= -\frac{\partial\mathbf V}{\partial\varphi}\otimes\mu_g+\operatorname{div}^{A*} \left(\left({\rm d}^A\varphi\right)^{\boldsymbol\sharp_{\kappa}}\otimes\mu_g\right)+\mathcal F^{\star},\vspace{2mm}\\
\alpha_\partial=0, & \displaystyle\dot\alpha_\partial=-\iota_{\partial M}^*\left(\tr \left({\rm d}^A\varphi\right)^{\boldsymbol\sharp_{\kappa}}\otimes\mu_g\right)+\mathcal F^{\star}_{\partial}.
\end{array}\right.
\end{equation*}
Analogously, the forced Lagrange--Dirac equations \eqref{LDequations_interaction_dagger} for the Yang--Mills--Higgs Lagrangian density read
\begin{equation*}
\left\{\begin{array}{ll}
\dot A=-E,& \dot\varphi=\nu\vspace{0.4cm}\\
\displaystyle\varsigma=-\boldsymbol\star E^{\flat_K},\; & \displaystyle\dot\varsigma=-{\rm d}^{A*}\left(\boldsymbol\star B_A^{\flat_K}\right)-\tilde\varrho_\varphi^*\left(\boldsymbol\star\left({\rm d}^A\varphi\right)^{\flat_{\kappa}}\right)+\hat{\mathcal F}^\dagger,\vspace{0.2cm}\\
\varsigma_\partial=0, & \displaystyle\dot\varsigma_\partial=-\,\iota_{\partial M}^*\left(\boldsymbol\star B_A^{\flat_K}\right)+\hat{\mathcal F}_\partial^\dagger,\vspace{2mm}\\
\displaystyle\alpha=\boldsymbol\star\dot\varphi^{\flat_{\kappa}},\; & \displaystyle\dot\alpha= -\boldsymbol\star\frac{\partial\mathbf V}{\partial\varphi}+{\rm d}^{A*}\left(\boldsymbol\star\left({\rm d}^A\varphi\right)^{\flat_{\kappa}}\right)+\mathcal F^\dagger,\vspace{0.2cm}\\
\alpha_\partial=0, & \displaystyle\dot\alpha_\partial=-\iota_{\partial M}^*\left(\boldsymbol\star\left({\rm d}^A\varphi\right)^{\flat_{\kappa}}\right)+\mathcal F^\dagger_{\partial}.
\end{array}\right.
\end{equation*}

For the latter equations, by using Lemma \ref{lemma:metricconnection}, eliminating the variables $\dot A,\varsigma,\varsigma_\partial,\nu,\alpha$ and $\alpha_\partial$, and denoting by $J:T\mathcal C(P)\to\Omega^1(M,\tilde{\mathfrak g})$, $j:T\mathcal C(P)\to\Omega^1(\partial M,\tilde{\mathfrak g})$, $\beth:T\Omega^0(M,\tilde{\mathfrak g})\to\Omega^0(M,\tilde{\mathfrak g})$ and $\gimel:T\Omega^0(M,\tilde V)\to\Omega^0(\partial M, \tilde V)$ the maps defined as $\boldsymbol\star\hat{\mathcal F}^\dagger=-J^{\flat_K}$, $\boldsymbol\star_\partial\hat{\mathcal F}_\partial^\dagger=j^{\flat_K}$, $\mathcal F^\dagger=\boldsymbol\star\beth^{\flat_{\kappa}}$ and $\mathcal F_\partial^\dagger=\boldsymbol\star_\partial\gimel^{\flat_{\kappa}}$, we obtain
\begin{equation}\label{eq:YMH_eqs}
\left\{\begin{array}{l}
\displaystyle\dot E-\delta^A B_A+\tilde\varrho_\varphi^*\left(({\rm d}^A\varphi)^{\flat_\kappa}\right)^{\sharp_K}=-J,\vspace{0.2cm}\\
\displaystyle\boldsymbol\star_\partial\left(\iota_{\partial M}^*\left(\boldsymbol\star B_A\right)\right)=j,\vspace{2mm}\\
\end{array}\right.\qquad\left\{\begin{array}{l}
\displaystyle\ddot\varphi+\delta^A\left({\rm d}^A\varphi\right)+\operatorname{grad}_{\kappa}\mathbf V=\beth,\vspace{0.2cm}\\
\displaystyle\boldsymbol\star_\partial\left(\iota_{\partial M}^*\left(\boldsymbol\star{\rm d}^A\varphi\right)\right)=\gimel.
\end{array}\right.
\end{equation}

Together with the equations ${\rm d}^AB_A=0$ and $\dot B_A=- {\rm d}^AE$, which follow from $B_A= {\rm d}^AA$ and $E=-\dot A$, these are the \textit{Yang--Mills--Higgs equations with external currents and boundary conditions} in the space+time decomposition.

\begin{remark}\rm
In order to obtain the second interior equation of \eqref{eq:YMH_eqs}, we have used that
\begin{align*}
\boldsymbol\star\left({\rm d}^{A*}\left(\boldsymbol\star\left({\rm d}^A\varphi\right)^{\flat_\kappa}\right)\right) & =\boldsymbol\star\left({\rm d}^{A}\left(\boldsymbol\star\left({\rm d}^A\varphi\right)\right)\right)^{\flat_\kappa}=\left(\delta^A\left({\rm d}^A\varphi\right)\right)^{\flat_\kappa}.
\end{align*}
This is true by assuming that $\nabla^A$ is a metric connection with respect to $\kappa$, whence $\flat_\kappa\circ{\rm d}^A={\rm d}^{A*}\circ\flat_\kappa$. If that is not the case, such equation would read:
\begin{equation*}
\displaystyle\ddot\varphi+\left(\delta^{A*}\left({\rm d}^A\varphi\right)^{\flat_\kappa}\right)^{\sharp_\kappa}+\operatorname{grad}_\kappa\mathbf V=\beth.
\end{equation*}
\end{remark}

An equivalent expression for the previous boundary conditions may be shown using coordinates:
\begin{equation*}
\iota_{\partial M}^*(i_n\,B_A)=(-1)^m\frac{|g|}{|g_\partial|}\,j,\qquad\iota_{\partial M}^*(i_n\,{\rm d}^A\varphi)=\frac{|g|}{|g_\partial|}\,\gimel.
\end{equation*}
These constitute a generalization of the boundary conditions discussed in \cite[Eq. (1.2)]{AiSoZh2019} to include boundary currents (with notations of that paper, we have $\mathcal K=\mathcal F$ and, thus, $T_\varphi\mathcal K^v=\{0\}$). For an alternative approach to Yang--Mills--Higgs theories with boundary conditions see \cite{Ta1997}.

Lastly, let us compute the energy balance. From \eqref{eq:lagrangiandensity_interaction}, it is clear that the energy density for the Yang--Mills--Higgs field is the sum of the energy densities of the Yang--Mills and the matter fields, i.e.,
\begin{align*}
\mathscr E_{\rm int} & =\mathscr E_{\rm YM}+\mathscr E_{\rm mat}=\left(\frac{1}{2}\mathbf{g}(E,E)+\frac{1}{2}\mathbf{g}(B_A,B_A)+\frac{1}{2}\mathbf g_{\kappa}(\dot\varphi,\dot\varphi)+\frac{1}{2}\mathbf g_{\kappa}({\rm d}^A\varphi,{\rm d}^A\varphi)+\mathbf V(\varphi)\right)\mu_g\\
& =\frac{1}{2}\left(E^{\flat_K}\bwedge\boldsymbol\star E+B_A^{\flat_K}\bwedge\boldsymbol\star B_A+\dot\varphi^{\flat_{\kappa}}\bwedge\boldsymbol\star\dot\varphi+({\rm d}^A\varphi)^{\flat_{\kappa}}\bwedge\boldsymbol\star{\rm d}^A\varphi\right)+\mathbf V(\varphi)\mu_g.
\end{align*}
Therefore, the local and global energy balances are obtained by adding the corresponding balances; namely,
\begin{align*}
\frac{\partial\mathscr E_{\rm tot}}{\partial t}=  -{\rm d}\mathsf S_{\rm gau}+E\bwedge\boldsymbol{\star}\, J^{\flat_K}-{\rm d}\mathsf S_{\rm mat}+\dot\varphi\bwedge\boldsymbol\star\beth^{\flat_{\kappa}},
\end{align*}
and
\begin{align*}
\frac{d}{dt} \int_M \mathscr{E}_{\rm tot}= & \underbrace{\int_M\big(E\bwedge\boldsymbol{\star}\, J^{\flat_K}+\dot\varphi\bwedge\boldsymbol\star\beth^{\flat_{\kappa}}\big)}_{\text{spatially distributed contribution}}\\
& +\underbrace{\int_{\partial M}\big(\big(\boldsymbol\star_\partial\,j^{\flat_K}\big)\bwedge\iota_{\partial M}^*E+\big(\iota_{\partial M}^*\dot\varphi\big)\bwedge\boldsymbol\star_\partial\gimel^{\flat_{\kappa}}\big)}_{\text{boundary contribution}}.
\end{align*}
where $\mathsf S_{\rm gau}=\mathsf S_{\rm YM}=E\bwedge\boldsymbol\star B_A^{\flat_K}$ and $\mathsf S_{\rm mat}=-\dot\varphi\bwedge\boldsymbol\star\left({\rm d}^A\varphi\right)^{\flat_{\kappa}}$.

As for the pure Yang--Mills theory, the \emph{charge density} is given by $\rho=\delta^A E$. From the first interior equation of \eqref{eq:YMH_eqs}, we obtain the charge conservation in presence of a Higgs field:
\begin{equation*}
\dot\rho+\delta^A \left(J-\tilde\varrho_\varphi^*\big(({\rm d}^A\varphi)^{\flat_\kappa}\big)^{\sharp_K}\right)=0.
\end{equation*}

\section{Conclusion}

In this paper, we developed a geometric formulation for systems that exchanges energy with their surroundings via energy flow, using the framework of infinite-dimensional Dirac structures. Our primary focus was on field theories involving bundle-valued $k$-forms, gauge field theories, and their interactions within a space-time decomposition. This work was built on the geometric framework introduced in Part I of this paper (see \cite{GBRAYo2025I}). The aim was to establish the foundational geometric aspects of the theory within a general setting that included potentially nontrivial vector and principal bundles, thereby highlighting the wide range of systems this approach can accommodate.
Compared to previously proposed approaches for systems with boundary energy flow, the Lagrange–Dirac framework presented here fulfills several key consistency requirements:
\begin{itemize}
\item[(1)] The infinite-dimensional Dirac structures employed are associated with a canonical symplectic form on the system's infinite-dimensional phase space, extending the classical symplectic structure $ dq \wedge dp$ from finite-dimensional mechanics.
\item[(2)] The evolution equations of the Lagrange–Dirac system arise as critical conditions of a variational principle. 
\item[(3)] This variational principle is a direct extension of Hamilton’s principle, $\delta \!\int\! L(q, \dot q) dt=0$, to the infinite-dimensional setting, naturally incorporating boundary contributions.
\item[(4)] The geometric framework does not impose specific restrictions on the form or regularity of the Lagrangian density. 
\end{itemize}

These properties will be further explored and exploited in future work, particularly in relation to interconnections, symmetry reduction, and modeling.

\section*{Acknowledgements}
F.G.-B. is partially supported by a start-up grant from the Nanyang Technological University. H.Y. is partially supported by JST CREST (JPMJCR24Q5), JSPS Grant-in-Aid for Scientific Research (22K03443) and Waseda University Grants for Special Research Projects (2025C-095).

\bibliographystyle{plainnat}
\bibliography{biblio}

\end{document}